\setlist[enumerate]{leftmargin=.5in}
\setlist[itemize]{leftmargin=.5in}
\crefname{hypothesis}{Hypothesis}{Hypotheses}
\title{DSOS and SDSOS Optimization:\\ More Tractable Alternatives to\\ Sum of Squares and Semidefinite Optimization\thanks{An extended abstract for this work has appeared in~\cite{dsos_ciss}.
\funding{ Amir Ali Ahmadi was partially supported by the DARPA Young Faculty Award, the Sloan Fellowship, the NSF CAREER Award, the AFOSR Young Investigator Program Award, and the Google Research Award.}}}
\author{
  Amir Ali Ahmadi\thanks{Department of Operations Research and Financial Engineering, Princeton University.
    (\email{a\_a\_a@princeton.edu}, \url{http://aaa.princeton.edu/}).}
  \and
  Anirudha Majumdar  \thanks{  Department of Mechanical and Aerospace Engineering, Princeton University. (\email{ani.majumdar@princeton.edu}, \url{https://irom-lab.princeton.edu/majumdar/}).}
  }
\def\ani#1{{\color{black}#1}}
\def\aaa#1{{\color{black}#1}}
\begin{document}

\maketitle

\begin{abstract} In recent years, optimization theory has been greatly impacted by the advent of sum of squares (SOS) optimization. The reliance of this technique on large-scale semidefinite programs however, has limited the scale of problems to which it can be applied.
  %
\aaa{In this paper, we introduce DSOS and SDSOS optimization as linear programming and second-order cone programming-based alternatives to sum of squares optimization that allow one to trade off computation time with solution quality.} These are optimization problems over certain subsets of sum of squares polynomials (or equivalently subsets of positive semidefinite matrices), which can be of interest in general applications of semidefinite programming where scalability is a limitation. We show that some basic theorems from SOS optimization which rely on results from real algebraic geometry are still valid for DSOS and SDSOS optimization. Furthermore, we show with numerical experiments from diverse application areas---polynomial optimization, statistics and machine learning, derivative pricing, and control theory---that with reasonable tradeoffs in accuracy, we can handle problems at scales that are currently significantly beyond the reach of traditional sum of squares approaches. Finally, we provide a review of recent techniques that bridge the gap between our DSOS/SDSOS approach and the SOS approach at the expense of additional running time.
  The \revision{Supplementary Material} of the paper introduces an accompanying MATLAB package for DSOS and SDSOS optimization.
  


  
  
  %
  %
  %
  %
  %
  %
  %
  %
  %
  %
\end{abstract}

\begin{keywords}
 Sum of squares optimization, polynomial optimization, nonnegative polynomials, semidefinite programming, linear programming, second order cone programming.
\end{keywords}

\begin{AMS}
   65K05, 90C25, 90C22, 90C05, 90C90, 12Y05, 93C85, 90C27
\end{AMS}

\section{Introduction}
For which values of the real coefficients $c_1,c_2,c_3,$ is the polynomial 
\begin{equation}\label{eq:opening.poly.ex}
p(x_1,x_2)= c_1x_1^4-6x_1^3x_2-4x_1^3+c_2x_1^2x_2^2+10x_1^2+12x_1x_2^2+c_3x_2^4
\end{equation}
\emph{nonnegative}, i.e., satisfies $p(x_1,x_2)\geq 0$ for all $(x_1,x_2)\in\mathbb{R}^2$? The problem of optimizing over nonnegative polynomials---of which our opening question is a toy example---is fundamental to many problems of applied and computational modern mathematics. In such an optimization problem, one would like to impose constraints on the coefficients of an unknown polynomial so as to make it nonnegative, either globally in $\mathbb{R}^n$ (as in the above example), or on a certain \emph{basic semialgebraic set}, i.e., a subset of $\mathbb{R}^n$ defined by a finite number of polynomial inequalities. We will demonstrate shortly (see Section~\ref{sec:intro_applications}) why optimization problems of this kind are ubiquitous in applications and universal in the study of questions dealing in one way or another with polynomial equations and inequalities.

Closely related to nonnegative polynomials are polynomials that are sums of squares. We say that a polynomial $p$ is a \emph{sum of squares} (sos) if it can be written as $p=\sum_i q_i^2$ for some (finite number of) polynomials $q_i$. For example, the polynomial in (\ref{eq:opening.poly.ex}) with $c_1=13,c_2=1,c_3=4$ is sos since it admits the decomposition $$p(x_1,x_2)=(x_1-2x_1^2)^2+(3x_1+2x_2^2)^2+(x_1x_2-3x_1^2)^2.$$

The relationship between nonnegative and sum of squares polynomials has been a classical subject of study in real algebraic geometry. For example, a result of Hilbert from 1888~\cite{Hilbert_1888} states that all nonnegative bivariate polynomials of degree four are sums of squares. It follows, as a special case, that the sets of coefficients for which the polynomial $p$ in (\ref{eq:opening.poly.ex}) is nonnegative or a sum of squares in fact coincide. In the general situation, however, while sum of squares polynomials are clearly always nonnegative, it is not true that the converse always holds. This was shown in the same paper of Hilbert~\cite{Hilbert_1888}, where he gave a non-constructive proof of existence of nonnegative polynomials that are not sums of squares. Explicit examples of such polynomials appeared many years later, starting with the work of Motzkin~\cite{MotzkinSOS} in the 1960s. Hilbert's interest in this line of research is also showcased in his 17th problem, which asks whether every nonnegative polynomial is a sum of squares of \emph{rational functions}. We refer the interested reader to an outstanding survey paper of Reznick~\cite{Reznick}, which covers many historical aspects around Hilbert's 17th problem, including the affirmative solution by Artin~\cite{Artin_Hilbert17}, as well as several later developments.

The classical questions around nonnegative and sum of squares polynomials have been revisited quite extensively in the past 10-15 years in different communities among applied and computational mathematicians. The reason for this renewed interest is twofold: (i) the discovery that many problems of modern practical interest can be cast as optimization problems over nonnegative polynomials, and (ii) the observation that while optimizing over nonnegative polynomials is generally NP-hard, optimization over the set of sum of squares polynomials can be done via \emph{semidefinite programming} (SDP); see Theorem~\ref{thm:sos.sdp} in Section~\ref{sec:sos.SDP.background}. The latter development, originally explored in the pioneering works of Shor~\cite{Shor}, Nesterov~\cite{NesterovSquared}, Parrilo~\cite{PhD:Parrilo},\cite{sdprelax}, and Lasserre~\cite{lasserre_moment}, has led to the creation of \emph{sum of squares optimization}---a computational framework, with semidefinite programming as its underlying engine, that can tackle many fundamental problems of  real algebra and polynomial optimization.

The dependence of sum of squares approaches on semidefinite programming can be viewed as both a strength and a weakness depending on one's perspective. From a computational complexity viewpoint, semidefinite programs can be solved with arbitrary accuracy in polynomial time using interior point methods (see \cite{VaB:96} for a comprehensive survey). As a result, sum of squares techniques offer polynomial time algorithms that approximate a very broad class of NP-hard problems of interest. From a more practical viewpoint however, SDPs are among the most expensive convex relaxations to solve. The speed and reliability of the current SDP solvers lag behind those for other more restricted classes of convex programs (such as linear or second order cone programs) by a wide margin. With the added complication that the SDPs generated by sos problems are large (see Section~\ref{sec:sos.SDP.background}), scalability has become the single most outstanding challenge for sum of squares optimization in practice.


\aaa{In this paper, we focus on the latter of the two viewpoints mentioned above and offer alternatives to sum of squares optimization that allow one to trade off computation time with solution quality.  While these alternatives are more conservative in general, they are significantly more scalable.}
\aaa{Our hope is that the proposed approaches will} expand the use and applicability of algebraic techniques in optimization to new areas and share its appeal with a broader audience. We call our new computational frameworks, which rely on linear and second order cone programming, \emph{DSOS and SDSOS optimization}\footnote{We use and recommend the pronunciation \emph{``d-sauce''} and \emph{``s-d-sauce''}.}. These are short for \emph{diagonally-dominant-sum-of-squares} and \emph{scaled-diagonally-dominant-sum-of-squares}; see Section~\ref{sec:dsos.sdsos} for precise definitions. While these tools are primarily designed for sum of squares optimization, they are also applicable to general applications of semidefinite programming where tradeoffs between scalability and performance may be desirable. In the interest of motivating our contributions for a diverse audience, we delay a presentation of these contributions until Section~\ref{sec:dsos.sdsos} and start instead with a portfolio of problem areas involving nonnegative polynomials. Any such problem area is one to which sum of squares optimization, as well as its new DSOS and SDSOS counterparts, are directly applicable.


\subsection{Why optimize over nonnegative polynomials?}
\label{sec:intro_applications}

We describe several motivating applications in this section at a high level. These will be revisited later in the paper with concrete computational examples; see Section~\ref{sec:experiments}.

\paragraph{{\bf Polynomial optimization}} A polynomial optimization problem (POP) is an optimization problem of the form

\begin{equation}\label{eq:POP}
\begin{array}{lll}
\mbox{minimize} &  p(x) &\ \\ 
\mbox{subject to} & x\in K\mathrel{\mathop:}=\{x\in\mathbb{R}^n\  |\  g_i(x)\geq 0, h_i(x)=0\}, &\
\end{array}
\end{equation}
where $p$, $g_i$, and $h_i$ are multivariate polynomials. The special case of problem (\ref{eq:POP}) where the polynomials $p,g_i,h_i$ all have degree one is of course \emph{linear programming}, which can be solved very efficiently. For higher degrees, POP contains as special cases many important problems in operations research; e.g., the optimal power flow problem in power engineering~\cite{OPF_survey}, the computation of Nash equilibria in game theory~\cite{lasserre_games},~\cite{Pablo_poly_games}, problems of Euclidean embedding and distance geometry~\cite{leo_siam_dg}, and a host of problems in combinatorial optimization.
%
We observe that if we could optimize over the set of polynomials that are \emph{nonnegative on a closed basic semialgebraic set}, then we could solve the POP problem to global optimality. To see this, note that the optimal value of problem (\ref{eq:POP}) is equal to the optimal value of the following problem:
\begin{equation}\label{eq:POP.equivalent}
\begin{array}{lll}
\mbox{maximize} &  \gamma &\ \\ 
\mbox{subject to} & p(x)-\gamma\geq 0, \ \forall x\in K. &\
\end{array}
\end{equation}
Here, we are trying to find the largest constant $\gamma$ such that the polynomial $p(x)-\gamma$ is nonnegative on the set $K$; i.e., the largest lower bound on problem (\ref{eq:POP}).


\paragraph{{{}\bf Combinatorial optimization}} Proofs of nonnegativity of polynomials of degree as low as four provide \emph{infeasibility certificates} for all decision problems in the complexity class NP, including many well-known combinatorial optimization problems. Consider, e.g., 
the simple-to-describe, NP-complete problem of PARTITION~\cite{GareyJohnson_Book}: Given a set of integers $a_1,\ldots, a_n$, decide if they can be split into two sets with equal sums.
It is straightforward to see that a PARTITION instance is infeasible if and only if the degree-four polynomial 
$$p(x)=\sum_{i=1}^n (x_i^2-1)^2+(\sum_{i=1}^n x_ia_i)^2$$
satisfies $p(x)>0$, $\forall x\in\mathbb{R}^n$. Indeed, $p$ is by construction a sum of squares and hence nonnegative. If it were to have a zero, each square in $p$ would have to evaluate to zero; but this can happen if and only if there is a binary vector $x\in\{-1,1\}^n$, which makes $\sum_{i=1}^n x_ia_i=0$, giving a yes answer to the PARTITION instance. Suppose now that for a given instance, and for some $\epsilon>0$, we could prove that $p(x)-\epsilon$ is \emph{nonnegative}. Then we would have proven that our PARTITION instance is infeasible, potentially without trying out all $2^n$ ways of splitting the integers into two sets. 

\paragraph{{\bf Control systems and robotics}} Numerous fundamental problems in nonlinear dynamics and control, such as stability, robustness, collision avoidance, and controller design can be turned into problems about finding special functions---the so-called \emph{Lyapunov functions} (see, e.g.,~\cite{Khalil:3rd.Ed})---that satisfy certain sign conditions. For example, given a differential equation $\dot{x}=f(x)$, where $f:\mathbb{R}^n\rightarrow\mathbb{R}^n$, and with the origin as an equilibrium point (i.e., satisfying $f(0)=0$), consider the ``region of attraction (ROA) problem'': Determine the set of initial states in $\mathbb{R}^n$ from which the trajectories flow to the origin. 
Lyapunov's stability theorem (see, e.g.,~\cite[Chap. 4]{Khalil:3rd.Ed}) tells us that if we can find a (Lyapunov) function $V:\mathbb{R}^n\rightarrow\mathbb{R}$, which together with its gradient $\nabla V$ satisfies

\begin{equation}\label{eq:poly_Lyap_inequalities}
V(x)>0\quad \forall x\neq0,   \quad \mbox{and} \quad \langle\nabla V(x),f(x)\rangle<0\quad \forall x\in\{x|\ V(x)\leq\beta, x\neq 0\},
\end{equation}
then the set $\{x\in\mathbb{R}^n|\ V(x)\leq\beta\}$ is part of the region of attraction. If $f$ is a polynomial function (a very important case in applications~\cite{PhD:Parrilo}), and if we parameterize $V$ as a polynomial function, then the search for the coefficients of $V$ satisfying the conditions in (\ref{eq:poly_Lyap_inequalities}) is an optimization problem over the set of nonnegative polynomials. Designing stable equilibrium points with large regions of attraction is a fundamental problem in control engineering and robotics; see, e.g.,~\cite{some_control_apps_sos},~\cite{tedrake_roa_walking}.

\paragraph{{\bf Statistical regression with shape constraints}} A problem that arises frequently in statistics is that of fitting a function to a set of data points with minimum error, while ensuring that it meets some structural properties, such as nonnegativity, monotonicity, or convexity~\cite{gupta_monotone_regression},~\cite{hannah_convex_regression}. Requirements of this type are typically imposed either as regularizers (to avoid overfitting), or more importantly as a result of prior knowledge that the true function to be estimated satisfies the same structural properties. In economics, for example, a regression problem for estimating the utility of consumers from sample measurements would come with a natural concavity requirement on the utility function. When the regression functions are polynomials, many such structural properties lead to polynomial nonnegativity constraints. For example, a polynomial $p(x)\mathrel{\mathop:}=p(x_1,\ldots,x_n)$ can be constrained to be concave by requiring its Hessian matrix $\nabla^2p(x)$, which is an $n\times n$ symmetric matrix with polynomial entries, to be negative semidefinite for all $x$. This condition is easily seen to be equivalent to the polynomial $-y^TH(x)y$ in the $2n$ variables $x\mathrel{\mathop:}=(x_1,\ldots,x_n)$ and $y\mathrel{\mathop:}=(y_1,\ldots,y_n)$ being nonnegative.

\paragraph{{\bf Probability bounds given moments}} The well-known inequalities of Markov and Chebyshev in probability theory bound the probability that a univariate random variable takes values in a subset of the real line given the first, or the first and second moments of its distribution~\cite{bertsekas_prob}. Consider a vast generalization of this problem where one is given a finite number of moments $\{\mu_{k_1,\ldots,k_n}\mathrel{\mathop:}=E[X_1^{k_1}\ldots X_n^{k_n}], k_1+\cdots,k_n\leq d\}$ of a multivariate multivariate random variable $X\mathrel{\mathop:}=(X_1,\ldots,X_n)$ and is interested in bounding the probability of an event $X\in\mathcal{S}$, where $\mathcal{S}$ is a general basic semialgebraic subset of the sample space $\Omega$. A sharp upper bound on this probability can be obtained by solving the following optimization problem for the coefficients $c_{k_1\ldots k_n}$ of an $n$-variate degree-$d$ polynomial $p$ (see~\cite{bertsimas_popescu_moment_prob},~\cite{popescu_moment_prob}):

\begin{equation}\label{eq:probability.bound.opt}
\begin{array}{lll}
\mbox{minimize} &  E[p(X)]=\sum c_{k_1\ldots k_n} \mu_{k_1\ldots k_n} &\ \\ 
\mbox{subject to} & p(x)\geq 1, \forall x\in\mathcal{S} &\ \\
\  & p(x)\geq 0, \forall x\in\Omega. &\
\end{array}
\end{equation}

It is easy to see that any feasible solution of problem (\ref{eq:probability.bound.opt}) gives an upper bound on the probability of the event $X\in\mathcal{S}$ as the polynomial $p$ is by construction placed above the indicator function of the set $S$ and the expected value of this indicator function is precisely the probability of the event $X\in\mathcal{S}$. Note that the constraints in (\ref{eq:probability.bound.opt}) are polynomial nonnegativity conditions.

\paragraph{{\bf Other applications}} The application areas outlined above are only a few examples in a long list of problems that can be tackled by optimizing over the set of nonnegative polynomials. Other interesting problems on this list include: computation of equilibria in games with continuous strategy spaces~\cite{Pablo_poly_games}, distinguishing separable and entangled states in quantum information theory~\cite{Pablo_Sep_Entang_States}, computing bounds on sphere packing problems in geometry~\cite{vallentin_bachoc_spheres}, software verification~\cite{MardavijRoozbehani2008}, volume computation~\cite{henrion2009approximate}, robust and stochastic optimization~\cite{DanIancu_sos}, filter design~\cite{filter_design_trig_sos}, combinatorics~\cite{raymond2018symmetry},~\cite{gouveia2010theta}, and automated theorem proving~\cite{harrison2007verifying}. A great reference for many applications in this area, as well as the related theoretical and computational background, is a recent volume edited by Blekherman, Parrilo, and Thomas~\cite{PabloGregRekha_BOOK}.

\paragraph{{\bf Organization of the paper}}  The remainder of this paper is structured as follows. In Section~\ref{sec:sos.SDP.background}, we briefly review the sum of squares approach and its relation to semidefinite programming for the general reader. We also comment on its challenges regarding scalability and prior work that has tackled this issue. The familiar reader can skip to Section~\ref{sec:dsos.sdsos}, where our contributions begin. In Section \ref{subsec:dsos.sdsos.cones}, we introduce the cones of dsos and sdsos polynomials and demonstrate that we can optimize over them using linear and second order cone programming respectively. Section \ref{subsec:rdsos.rsdsos} introduces hierarchies of cones based on dsos and sdsos polynomials that can better approximate the cone of nonnegative polynomials. Asymptotic guarantees on these cones are also presented. Similar to the sum of squares hierarchy, these guarantees can be turned into a hierarchy of converging lower bounds for polynomial optimization problems that use linear or second order cone programming in each step.


 Section \ref{sec:experiments} presents numerical experiments that use our approach on large-scale examples from polynomial optimization (Section \ref{subsec:experiments.pop}), combinatorial optimization (Section \ref{sec:copositivity}), statistics and machine learning (Sections \ref{subsec:experiments.convex.regression} and \ref{subsec:experiments.sparse.pca}), financial mathematics (Section~\ref{subsec:experiments.options.pricing}), and control theory and robotics (Section \ref{sec:controls_apps}). Section \ref{sec:improvements} provides a review of recent techniques for bridging the gap between the DSOS/SDSOS approach and the SOS approach at the expense of additional computational costs. Section \ref{sec:conclusions} concludes the paper. The Supplementary Material introduces \texttt{iSOS} (``inside SOS''), an accompanying MATLAB package for DSOS and SDSOS optimization written using the SPOT toolbox \cite{Megretski_spot}. 
 


\section{Review of the semidefinite programming-based approach and computational considerations} \label{sec:sos.SDP.background}
As mentioned earlier, sum of squares optimization handles a (global) nonnegativity constraint on a polynomial $p$ by replacing it with the \ani{more restrictive} requirement that $p$ be a sum of squares. The situation where $p$ is only constrained to be nonnegative on a certain basic semialgebraic set\footnote{In this formulation, we have avoided equality constraints for simplicity. Obviously, there is no loss of generality in doing this as an equality constraint $h(x)=0$ can be imposed by the pair of inequality constraints $h(x)\geq 0, -h(x)\geq 0$.} $$\mathcal{S}\mathrel{\mathop:}=\{x\in\mathbb{R}^n| \ g_1(x)\geq 0,\ldots,g_m(x)\geq 0\}$$ can also be handled with the help of appropriate sum of squares multipliers. For example, if we succeed in finding sos polynomials $s_0,s_1,\ldots,s_m$, such that

\begin{equation} \label{eq:p=s0+sigi}
p(x)=s_0(x)+\sum_{i=1}^m s_i(x)g_i(x),
\end{equation}
then we have found a certificate of nonnegativity of $p$ on the set $\mathcal{S}$. Indeed, if we evaluate the above expression at any $x\in\mathcal{S}$, nonnegativity of the polynomials $s_0,s_1\ldots,s_m$ imply that $p(x)\geq 0$. A
Positivstellensatz from real algebraic geometry due to Putinar~\cite{putinar1993positive} states that if the set $\mathcal{S}$ satisfies the so-called \emph{Archimedean} property, a property only slightly stronger than compactness (see, e.g.,~\cite{Laurent_survey} for a precise definition), then every polynomial positive on $\mathcal{S}$ has a representation of the type (\ref{eq:p=s0+sigi}), for some sos polynomials $s_0,s_1,\ldots,s_m$ of high enough degree (see also~\cite{nie2007Putinarcomplexity} for degree bounds). Even with no qualifications whatsoever regarding the set $\mathcal{S}$, there are other Positivstellens\"atze (e.g., due to Stengle~\cite{stengle1974nullstellensatz}) that certify nonnegativity of a polynomial on a basic semialgebraic set using sos polynomials. These certificates are only slightly more complicated than (\ref{eq:p=s0+sigi}) and involve sos multipliers associated with products among polynomials $g_i$ that define $\mathcal{S}$~\cite{sdprelax}. A great reference for the interested reader is the survey paper by Laurent~\cite{Laurent_survey}.\footnote{Our review material in this section does not do justice to the ``dual'' theory of \emph{generalized moment problems}; in addition to the survey paper by Laurent, the interested reader is referred to the recent textbooks by Lasserre~\cite{Lasserre_book},~\cite{Lasserre_Book2} and references therein.}

The computational advantage of a certificate of (global or local) nonnegativity via sum of squares polynomials is that it can be automatically found by semidefinite programming. The following well-known theorem establishes the link between sos polynomials and SDP.
%
%
Let us denote the set of $n\times n$ symmetric matrices by $\mathcal{S}_n$. Recall that a matrix $A\in\mathcal{S}_n$ is \emph{positive semidefinite} (psd) if $x^TAx\geq 0, \forall x\in\mathbb{R}^n$, and that semidefinite programming is the problem of optimizing a linear function over psd matrices subject to affine inequalities on their entries~\cite{VaB:96}. We denote the positive semidefiniteness of a matrix $A$ with the standard notation $A\succeq 0$.


\begin{theorem}[see, e.g., \cite{PhD:Parrilo},\cite{sdprelax}]
	\label{thm:sos.sdp}
	A multivariate polynomial $p\mathrel{\mathop:}=p(x)$ in $n$ variables and of degree
	$2d$ is a sum~of~squares if and only if there exists a symmetric matrix $Q$ (often called the Gram matrix) such that
	\begin{equation}\label{eq:p=z'Qz}
	\begin{array}{rll}
	p(x)&=&z^{T}Qz, \\
	Q&\succeq&0,
	\end{array}
	\end{equation}
	where $z$ is the vector of monomials of degree up to $d$:
	\begin{equation*}\label{eq:monomials}
	z=[1,x_{1},x_{2},\ldots,x_{n},x_{1}x_{2},\ldots,x_{n}^d].
	\end{equation*}
\end{theorem}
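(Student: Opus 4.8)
The plan is to prove the two implications separately, in both cases leaning on the elementary linear-algebra fact that a symmetric matrix $Q$ is positive semidefinite if and only if it factors as $Q = \sum_j u_j u_j^T$ for some vectors $u_j$ (equivalently $Q = L^T L$), which one gets from the spectral decomposition, together with the observation that the entries of the monomial vector $z$ are precisely all monomials in $x$ of degree at most $d$.

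For the ``if'' direction I would start from a representation $p(x) = z^T Q z$ with $Q \succeq 0$, factor $Q = \sum_j u_j u_j^T$, and expand
\begin{equation*}
p(x) = z^T \Big( \sum_j u_j u_j^T \Big) z = \sum_j (u_j^T z)^2 .
\end{equation*}
Since each $u_j^T z$ is a polynomial, this exhibits $p$ as a sum of squares; this direction is routine.

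For the ``only if'' direction I would take an arbitrary sos representation $p = \sum_i q_i^2$ and first establish the key structural lemma: since $\deg p = 2d$, each $q_i$ has degree at most $d$. I expect this to be the only genuine obstacle. The argument I have in mind passes to the top-degree homogeneous component: if some $q_i$ had degree $e > d$, then the degree-$2e$ part of $\sum_i q_i^2$ would equal $\sum_{i : \deg q_i = e} \hat q_i^2$, where $\hat q_i$ is the degree-$e$ part of $q_i$; this polynomial is nonzero because a finite sum of squares of real polynomials vanishes only when every summand vanishes (no cancellation among the top-degree parts is possible over $\mathbb{R}$), so $p$ would have degree $2e > 2d$, a contradiction. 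With this lemma in hand, each $q_i$ is a linear combination $q_i = c_i^T z$ of the monomials in $z$, and setting $Q \mathrel{\mathop:}= \sum_i c_i c_i^T$ yields $Q \succeq 0$ and $z^T Q z = \sum_i (c_i^T z)^2 = p(x)$, completing the proof.

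Finally, I would add the remark that the Gram matrix $Q$ is in general not unique: distinct pairs of monomials appearing in $z$ may have identical products, so the linear map $Q \mapsto z^T Q z$ from $\mathcal{S}_{|z|}$ to polynomials has a nontrivial kernel. This non-uniqueness is harmless for the equivalence itself, but it is exactly what makes the search for a psd $Q$ a nontrivial feasibility problem (the semidefinite program behind Theorem~\ref{thm:sos.sdp}) rather than a mere change of variables.
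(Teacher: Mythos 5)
Your proof is correct and complete. The paper does not actually prove this theorem itself---it is stated as a classical result with pointers to \cite{PhD:Parrilo} and \cite{sdprelax}---but your argument (factoring $Q=\sum_j u_ju_j^T$ to extract the squares in one direction, and in the other direction reducing to the key lemma that each $q_i$ has degree at most $d$ via non-cancellation of the top-degree homogeneous parts of a sum of squares over $\mathbb{R}$) is exactly the standard proof given in those references, and you have correctly identified and handled the one genuinely nontrivial step.
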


Searching for a matrix $Q$ satisfying the positive semidefiniteness constraint, as well as the linear equality constraints coming from (\ref{eq:p=z'Qz}), amounts to solving a semidefinite program. The size of the matrix $Q$ in this theorem is $${n+d\choose d}\times {n+d\choose d},$$
which approximately equals $n^d \times n^d$. While this number is polynomial in $n$ for fixed $d$, it can grow rather quickly even for low-degree polynomials. For example, a degree-$4$ polynomial ($d=2$) in $50$ variables has 316251 coefficients and its Gram matrix, which would need to be positive semidefinite, contains 879801 decision variables. A semidefinite constraint of this size is quite expensive, and in fact a problem with $50$ variables is well beyond the current realm of possibilities in SOS optimization. In the absence of problem structure, sum of squares problems involving degree-$4$ or $6$ polynomials are currently limited, roughly speaking, to a handful or a dozen variables.

There have been many contributions already to improvements in scalability of sum of squares techniques. One approach has been to develop systematic techniques for taking advantage of problem structure, such as sparsity or symmetry of the underlying polynomials, to reduce the size of the SDPs~\cite{Symmetry_groups_Gatermann_Pablo, vallentin_symmetry,deklerk_symmetry,riener_symmetry}. These techniques have proven to be very useful as problems arising in practice sometimes do come with a \aaa{lot of structure}. 
Another approach which holds promise has been to design customized solvers for SOS programs that avoid resorting to an off-the-shelf interior point solver. These techniques can often lead to improvements for special classes of problems. Examples in this direction include the works in~\cite{bertsimas_large_scale, nie_large_scale,sun_large_scale,qsdpnal,zheng2017fast,yang2015sdpnal,papp2017sum,henrion2012projection}. There has also been recent work by Lasserre et al. that increases scalability of sum of squares optimization problems at the cost of accuracy of the solutions obtained. This is done by bounding the size of the largest SDP constraint appearing in the sos formulation, and leads to what the authors call the BSOS (bounded SOS) hierarchy~\cite{Lasserre_bsos}.




The approach we take in this paper for enhancing scalability is orthogonal to the ones mentioned above (and can potentially later be combined with them). 
We propose to eschew sum of squares decompositions to begin with. In our view, almost all application areas of this field use sum of squares decompositions not for their own sake, but because they provide a means to polynomial nonnegativity. Hence, this paper is motivated by a natural question: Can we give other sufficient conditions for polynomial nonnegativity that are perhaps \ani{more restrictive} than a sum of squares \aaa{decomposition}, but cheaper to work with?\footnote{Given this motivation, we note that our numerical experiments in Section 4 compare our approach primarily with the standard and widely-used semidefinite programming-based approach to sum of squares optimization. They are certainly not meant to be exhaustive. We believe that comparisons with the special-purpose techniques highlighted above and with other solvers would be interesting but are better suited for a separate work given our space limitations.
}

In the next section, we identify some subsets of the cone of nonnegative polynomials that one can optimize over using linear programming (LP) and second order cone programming (SOCP)~\cite{socp_boyd},~\cite{socp_alizadeh_goldfarb}. Not only are these much more efficiently solvable classes of convex programs, but they are also superior to semidefinite programs in terms of numerical conditioning. In addition, working with these classes of convex programs allows us to take advantage of high-performance LP and SOCP solvers (e.g., CPLEX~\cite{cplex}) that have been matured over several decades because of industry applications. We remark that while there have been other approaches to produce LP hierarchies for polynomial optimization problems (e.g., based on the Krivine-Stengle certificates of positivity \cite{Krivine, stengle1974nullstellensatz, Lasserre_book}), these LPs, though theoretically significant, are typically quite weak in practice and often numerically ill-conditioned \cite{Lasserre_bsos}. Finally, we remark that for a range of combinatorial optimization problems, there exist interesting algebraic approaches that do not rely on SDPs and only involve linear algebraic operations~\cite{de2011computing},~\cite{de2008hilbert}. While our techniques are similar in the spirit of avoiding SDPs, they are not tied to combinatorial (or polynomial) optimization problems.

\section{DSOS and SDSOS Optimization}\label{sec:dsos.sdsos}
We start with some basic definitions. A \emph{monomial} $m:\mathbb{R}^n\rightarrow\mathbb{R}$ in variables $x\mathrel{\mathop:}=(x_1,\ldots,x_n)$ is a function of the form $m(x)=x_1^{\alpha_1}\ldots x_n^{\alpha_n}$, where each $\alpha_i$ is a nonnegative integer. The \emph{degree} of such a monomial is by definition $\alpha_1+\cdots+\alpha_n$. A \emph{polynomial} $p:\mathbb{R}^n\rightarrow\mathbb{R}$ is a finite linear combination of monomials $p(x)=\sum_{{\alpha_1}\ldots{\alpha_n}} c_{{\alpha_1}\ldots{\alpha_n}} x_1^{\alpha_1}\ldots x_n^{\alpha_n}$. The degree of a polynomial is the maximum degree of its monomials.  A \emph{form} or a \emph{homogeneous polynomial} is a polynomial whose monomials all have the same degree. For any polynomial $p\mathrel{\mathop:}=p(x_1,\ldots,x_n)$ of degree $d$, one can define its \emph{homogenized version} $p_h$ by introducing a new variable $y$ and defining $p_h(x_1,\ldots,x_n,y)=y^d p(\frac{x}{y})$. One can reverse this operation (dehomogenize) by setting the homogenizing variable $y$ equal to one: $p(x_1,\ldots,x_n)=p_h(x_1,\ldots,x_n,1)$. The properties of being nonnegative and a sum of squares (as defined earlier) are preserved under homogenization and dehomogenization~\cite{Reznick}. 
We say that a form $f$ is \emph{positive definite} if $f(x)>0$ for all $x\neq 0$. We denote by $PSD_{n,2d}$ and $SOS_{n,2d}$ the set of nonnegative (a.k.a. positive semidefinite) and sum of squares polynomials in $n$ variables and degree $2d$ respectively. (Note that odd-degree polynomials can never be nonnegative.) Both these sets form proper (i.e., convex, closed, pointed, and solid) cones in $\mathbb{R}^{n+2d\choose 2d}$ with the obvious inclusion relationship $SOS_{n,2d}\subseteq\ PSD_{n,2d}$.

The basic idea behind our approach is to approximate $SOS_{n,2d}$ from the inside with new sets that are more tractable for optimization purposes. Towards this goal, one may think of several natural sufficient conditions for a polynomial to be a sum of squares. For example, the following may be natural first candidates:
\begin{itemize}
	\item The set of polynomials that are sums of 4-th powers of polynomials: \\$\{p| \ p=\sum q_i^4\}$,
	\item The set of polynomials that are a sum of a constant number of squares of polynomials; e.g., a sum of three squares: $\{p|\  p=q_1^2+q_2^2+q_3^2\}.$
\end{itemize} 

Even though both of these sets clearly reside inside the set of sos polynomials, they are not any easier to optimize over. In fact, they are much harder: testing whether a polynomial is a sum of 4-th powers is NP-hard already for quartics~\cite{tensor_hard} (in fact, the cone of 4-th powers of linear forms is dual to the cone of nonnegative quartic forms~\cite{reznick_blenders}) and optimizing over polynomials that are sums of three squares is intractable (as this task even for quadratics subsumes the NP-hard problem of positive semidefinite matrix completion with a rank constraint~\cite{laurent_completion_rank}). These examples illustrate that in general, an inclusion relationship does not imply anything with respect to optimization complexity. Hence, we need to take some care when choosing which subsets of $SOS_{n,2d}$ to work with. On the one hand, these subsets have to be ``big enough'' to be useful in practice; on the other hand, they should be more tractable to optimize over.



\subsection{The cones of dsos and sdsos polynomials} \label{subsec:dsos.sdsos.cones}


We now describe two interesting cones inside $SOS_{n,2d}$ that lend themselves to linear and second order cone representations and are hence more tractable for optimization purposes. These cones will also form the building blocks of some more elaborate cones (with improved performance) that we will present in Subsection~\ref{subsec:rdsos.rsdsos} and Section~\ref{sec:improvements}.


\begin{definition} \label{def:dsos} 
	A polynomial $p\mathrel{\mathop:}=p(x)$ is \emph{diagonally-dominant-sum-of-squares} (dsos) if it can be written as 
	\begin{equation}\label{eq:dsos}
	p(x)=\sum_i \alpha_i m_i^2(x) + \sum_{i,j} \beta_{ij}^+ (m_i(x)+ m_j(x))^2+ \sum_{i,j} \beta_{ij}^- (m_i(x)- m_j(x))^2,
	\end{equation}
	for some monomials $m_i(x), m_j(x)$ and some nonnegative scalars $\alpha_i,\beta_{ij}^+, \beta_{ij}^-$. We denote the set of polynomials in $n$ variables and degree $2d$ that are dsos by $DSOS_{n,2d}$.
\end{definition}

\begin{definition} \label{def:sdsos} 
	A polynomial $p\mathrel{\mathop:}=p(x)$ is \emph{scaled-diagonally-dominant-sum-of-squares} (sdsos) if it can be written as 
	\begin{equation}\label{eq:sdsos}
	p(x)=\sum_i \alpha_i m_i^2(x) + \sum_{i,j}  (\hat{\beta}_{ij}^+ m_i(x)+ \tilde{\beta}_{ij}^+ m_j(x))^2+\sum_{i,j}  (\hat{\beta}_{ij}^- m_i(x)- \tilde{\beta}_{ij}^- m_j(x))^2,
	\end{equation}
	for some monomials $m_i(x), m_j(x)$ and some scalars $\alpha_i, \hat{\beta}_{ij}^+, \tilde{\beta}_{ij}^+, \hat{\beta}_{ij}^-, \tilde{\beta}_{ij}^-$ with $\alpha_i \geq~0$. We denote the set of polynomials in $n$ variables and degree $2d$ that are sdsos by $SDSOS_{n,2d}$. 
\end{definition}


From the definition, the following inclusion relationships should be clear:
$$DSOS_{n,2d}\subseteq SDSOS_{n,2d}\subseteq SOS_{n,2d}\subseteq PSD_{n,2d}.$$
In general, all these containment relationships are strict. Figure~\ref{fig:dsos.sdsos.sos} shows these sets for a family of bivariate quartics parameterized by two coefficients $a,b$:\footnote{It is well known that all psd bivariate forms are sos~\cite{Reznick} and hence the outer set exactly characterizes all values of $a$ and $b$ for which this polynomial is nonnegative.}  

\begin{equation}\label{eq:parametric.bivariate.quartic}
p(x_1,x_2)=x_1^4+x_2^4+ax_1^3x_2+(1-\frac12 a-\frac12 b)x_1^2x_2^2+2bx_1x_2^3.
\end{equation}

 Some remarks about the two definitions above are in order. It is not hard to see that if $p$ has degree $2d$, the monomials $m_i,m_j$ in (\ref{eq:dsos}) or (\ref{eq:sdsos}) never need to have degree higher than $d$. In the special case where $p$ is homogeneous of degree $2d$ (i.e., a form), the monomials $m_i, m_j$ can be taken to have degree exactly $d$. We also note that decompositions given in (\ref{eq:dsos}) or (\ref{eq:sdsos}) are not unique; there can even be infinitely many such decompositions but the definition requires just one. \aaa{Finally, we have found out that interestingly, sdsos polynomials were studied in the 1970's and 1980's for purely algebraic reasons by Robinson~\cite{RobinsonSOS}, Reznick~\cite{reznick_AGI}, and Choi, Lam, and Reznick~\cite{even_sym_sextics}. These papers use the more descriptive terminology of \emph{``sum of binomial squares''} (sbs), which is consistent with the decomposition in (\ref{eq:sdsos}). In~\cite{reznick_AGI} for example, Reznick characterizes interesting families of forms that are sos if and only if they are sbs (i.e., sdsos). In~\cite{even_sym_sextics}, Choi, Lam, and Reznick provide a complete description of sbs forms and their extremal elements among the family of even symmetric sextics. In the process, they answer questions that Robinson had raised earlier~\cite{RobinsonSOS} about sbs polynomials.}
 
 

\begin{figure}
	\centering
		\includegraphics[width=.5\columnwidth]{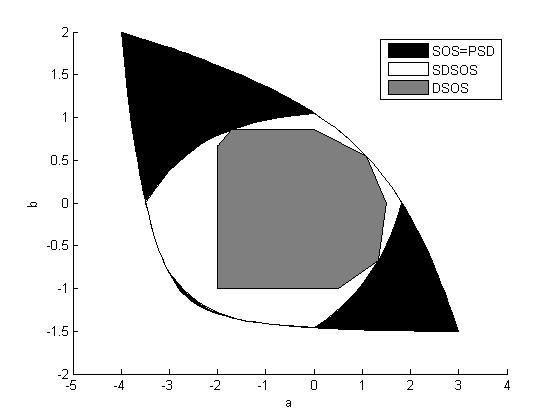}
	\caption{A comparison of the set of dsos/sdsos/sos/psd polynomials on a parametric family of bivariate quartics given in (\ref{eq:parametric.bivariate.quartic}).}
	\label{fig:dsos.sdsos.sos}
\end{figure}


%
%
%
%
%
%
%
%
%
%
%
%

Our terminology in Definitions~\ref{def:dsos} and~\ref{def:sdsos} comes from a connection (which we will soon establish) to the following classes of symmetric matrices.
\begin{definition}
	A symmetric matrix $A=(a_{ij})$ is \emph{diagonally dominant} (dd) if $$a_{ii} \geq \sum_{j \neq i} |a_{ij}|$$ for all $i$. A symmetric matrix $A$ is \emph{scaled diagonally dominant} (sdd) if there exists a diagonal matrix $D$, with positive diagonal entries, such that $DAD$ is dd.\footnote{\aaa{The requirement that the diagonal matrix $D$ have positive diagonal entries can be replaced with it being nonsingular without changing the definition.} Checking whether a given matrix $A=(a_{ij})$ is sdd can be done by solving a linear program since the definition is equivalent to existence of an element-wise positive vector $d$ such that
		$$a_{ii} d_i \geq \sum_{j \neq i} |a_{ij}| d_j, \forall i.$$} 
	 We denote the set of $n \times n$ dd and sdd matrices with $DD_n$ and $SDD_n$ respectively.
\end{definition}

It follows from Gershgorin's circle theorem~\cite{gersh} that diagonally dominant matrices are positive semidefinite. This implies that scaled diagonally dominant matrices are also positive semidefinite as the eigenvalues of $DAD$ have the same sign as those of $A$. Hence, if we denote the set of $n\times n$ positive semidefinite matrices by $P_n$, the following inclusion relationships are evident:

$$DD_n\subseteq SDD_n \subseteq P_n.$$
These containments are strict for $n>2$. In Figure~\ref{fig:dd.sdd.psd}, we illustrate the set of $x$ and $y$ for which the matrix $$I+xA+yB$$ is diagonally dominant, scaled diagonally dominant, and positive semidefinite. Here, $I$ is the identity matrix and the $5 \times 5$ symmetric matrices $A$ and $B$ were generated randomly with iid entries sampled from the standard normal distribution. Our interest in these inner approximations to the set of psd matrices stems from the fact that optimization over them can be done by linear and second order cone programming respectively (see Theorem~\ref{thm:dsos.sdsos.LP.SOCP} below, which is more general). For now, let us relate these matrices back to dsos and sdsos polynomials.

\begin{figure}
	\centering
	\includegraphics[width=.5\columnwidth]{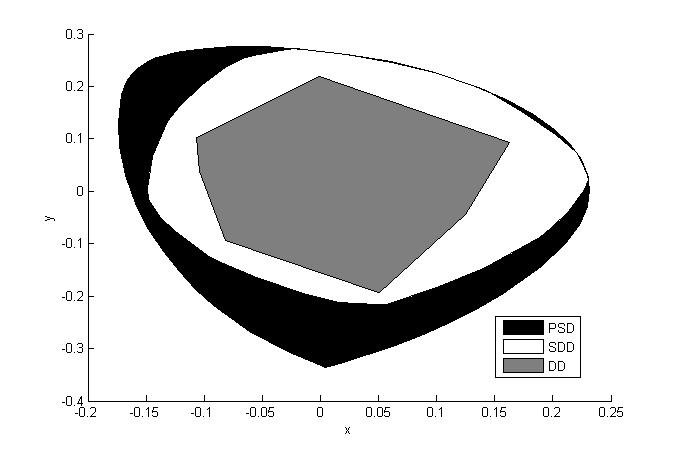}
	\caption{A section of the cone of $5\times 5$ diagonally dominant, scaled diagonally dominant, and positive semidefinite matrices. Optimization over these sets can respectively be done by LP, SOCP, and SDP.}
	\label{fig:dd.sdd.psd}
\end{figure}

\begin{theorem}\label{thm:dsos.dd}
	A polynomial $p$ of degree $2d$ is dsos if and only if it admits a representation as $p(x)=z^T(x)Qz(x)$, where $z(x)$ is the standard monomial vector of degree $\leq d$ and $Q$ is a dd matrix.
\end{theorem}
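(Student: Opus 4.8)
The plan is to establish the diagonally‑dominant analogue of Theorem~\ref{thm:sos.sdp} (the Gram‑matrix characterization of sos polynomials), the only new ingredient being a concrete description of the cone of dd matrices. Concretely, the key lemma I would prove first is: a symmetric matrix $Q$ is diagonally dominant if and only if
\[
Q=\sum_k \alpha_k\, e_k e_k^T+\sum_{k<l}\beta_{kl}^{+} (e_k+e_l)(e_k+e_l)^T+\sum_{k<l}\beta_{kl}^{-} (e_k-e_l)(e_k-e_l)^T
\]
for some nonnegative scalars $\alpha_k,\beta_{kl}^{+},\beta_{kl}^{-}$, where $e_k$ denotes the $k$-th standard basis vector. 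One direction is immediate: each of the matrices $e_ke_k^T$ and $(e_k\pm e_l)(e_k\pm e_l)^T$ is visibly dd, and $DD_n$ is closed under nonnegative combinations, so any $Q$ of the above form is dd. For the converse, given a dd matrix $Q=(q_{kl})$ one reads the coefficients off directly: for $k<l$ put $\beta_{kl}^{+}=\max(q_{kl},0)$ and $\beta_{kl}^{-}=\max(-q_{kl},0)$, so that $\beta_{kl}^{+}-\beta_{kl}^{-}=q_{kl}$ and $\beta_{kl}^{+}+\beta_{kl}^{-}=|q_{kl}|$, and put $\alpha_k=q_{kk}-\sum_{l\neq k}|q_{kl}|$, which is $\geq 0$ precisely by the definition of diagonal dominance; an entrywise check confirms this reconstructs $Q$.

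For the ``if'' direction of the theorem, suppose $p(x)=z^T(x)Qz(x)$ with $Q$ dd and apply the lemma to $Q$. Writing $m_k(x)$ for the $k$-th entry of $z(x)$, we have $e_k^T z(x)=m_k(x)$ and $(e_k\pm e_l)^T z(x)=m_k(x)\pm m_l(x)$, so substituting the decomposition of $Q$ into $z^T(x)Qz(x)$ yields
\[
p(x)=\sum_k \alpha_k\, m_k^2(x)+\sum_{k<l}\beta_{kl}^{+}\bigl(m_k(x)+m_l(x)\bigr)^2+\sum_{k<l}\beta_{kl}^{-}\bigl(m_k(x)-m_l(x)\bigr)^2,
\]
which is exactly a representation of the form (\ref{eq:dsos}) with nonnegative scalars; hence $p$ is dsos.

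For the ``only if'' direction, suppose $p$ is dsos, with a decomposition as in (\ref{eq:dsos}). By the observation made after Definition~\ref{def:dsos} (a standard Newton‑polytope argument, cf.\ \cite{Reznick}), we may assume all the monomials $m_i,m_j$ occurring have degree at most $d$, so each equals some entry of the standard monomial vector $z(x)$ of degree $\leq d$; say $m_i=e_{\sigma(i)}^{T}z$. Replacing each $m_i$ by $e_{\sigma(i)}^{T}z$ and each $m_i\pm m_j$ by $(e_{\sigma(i)}\pm e_{\sigma(j)})^{T}z$ and expanding, we obtain $p=z^TQz$ with
\[
Q=\sum_i \alpha_i\, e_{\sigma(i)}e_{\sigma(i)}^{T}+\sum_{i,j}\beta_{ij}^{+}(e_{\sigma(i)}+e_{\sigma(j)})(e_{\sigma(i)}+e_{\sigma(j)})^{T}+\sum_{i,j}\beta_{ij}^{-}(e_{\sigma(i)}-e_{\sigma(j)})(e_{\sigma(i)}-e_{\sigma(j)})^{T}.
\]
Since the scalars $\alpha_i,\beta_{ij}^{\pm}$ are nonnegative and every summand is a dd matrix, $Q$ is a nonnegative combination of dd matrices and therefore dd, which completes the argument.

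The routine parts here are the two substitution/expansion computations and the one‑line verification in the lemma. The only genuinely nontrivial point is the reduction, in the ``only if'' direction, to decompositions whose monomials all have degree at most $d$ — this is what guarantees the Gram matrix produced is indexed exactly by the standard degree‑$\leq d$ monomial vector $z(x)$. The paper disposes of this via the remark following the definitions; for a self‑contained treatment one could instead argue either by the standard Newton‑polytope bound on the supports of the polynomials being squared, or by first collecting the given decomposition into a Gram matrix over the (possibly larger) set of monomials actually appearing, noting that this matrix is dd hence psd, and then invoking the fact that a psd matrix with a zero diagonal entry has the corresponding row and column equal to zero in order to discard every monomial of degree exceeding $d$.
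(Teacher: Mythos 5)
Your proof is correct and follows essentially the same route as the paper: both directions hinge on decomposing a dd matrix into nonnegative combinations of the rank-one matrices $vv^T$ with $v$ having at most two nonzero entries equal to $\pm 1$, which the paper cites as Lemma~\ref{lem:dd.corners} (Barker--Carlson) and which you instead prove directly via the explicit coefficients $\beta_{kl}^{\pm}=\max(\pm q_{kl},0)$ and $\alpha_k=q_{kk}-\sum_{l\neq k}|q_{kl}|$. Your explicit attention to the reduction to monomials of degree at most $d$ is also sound; the paper disposes of that point in the remark following Definitions~\ref{def:dsos} and~\ref{def:sdsos}.
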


The following lemma by Barker and Carlson provides an extreme ray characterization of the set of diagonally dominant matrices and will be used in the proof of the above theorem.

\begin{lemma}[Barker and Carlson \cite{dd_extreme_rays}]\label{lem:dd.corners}
Let \aaa{$\mathcal{V}=\{v_j\}_{j=1}^{2n^2}$} be the set of all nonzero vectors in $\mathbb{R}^n$ with at most $2$ nonzero components, each equal to $\pm 1$. \aaa{Let $\{V_i\}_{i=1}^{n^2}$ be the set of all rank-one matrices of the form $V_i=vv^T$ for some $v\in\mathcal{V}.$} Then, a symmetric $n\times n$ matrix $M$ is diagonally dominant if and only if it can be written as $$M=\sum_{i=1}^{n^2} \eta_i \aaa{V_i}, $$ for some $\eta_i\geq 0$.
\end{lemma}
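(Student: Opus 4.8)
The plan is to prove the two implications separately. The forward (``if'') direction will be a short consequence of $DD_n$ being a convex cone whose extreme generators include the matrices $V_i$; the reverse (``only if'') direction is the substantive part, and I would prove it by writing down an explicit nonnegative decomposition of an arbitrary diagonally dominant matrix.

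For the ``if'' direction, I would first record that $DD_n$ is a convex cone: it is obviously closed under nonnegative scaling, and it is closed under addition because $|a_{ij}+b_{ij}|\le|a_{ij}|+|b_{ij}|$, so diagonal dominance of $A$ and of $B$ forces it for $A+B$ row by row. Then I would check directly that each generator $V_i=vv^T$ with $v\in\mathcal V$ is diagonally dominant: if $v=\pm e_k$, then $vv^T$ has its only nonzero entry on the diagonal; if $v=\pm e_k\pm e_l$, then $vv^T$ has $1$ in positions $(k,k)$ and $(l,l)$, $\pm 1$ in positions $(k,l)$ and $(l,k)$, and zeros elsewhere, so the defining inequality reads $1\ge 1$ in rows $k$ and $l$ and $0\ge 0$ in every other row. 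Hence any $\sum_i\eta_iV_i$ with $\eta_i\ge 0$ lies in $DD_n$.

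For the ``only if'' direction, given a dd matrix $M=(m_{ij})$ I would assemble the decomposition term by term. For each unordered pair $i<j$, set $u_{ij}=e_i+e_j$ when $m_{ij}\ge 0$ and $u_{ij}=e_i-e_j$ when $m_{ij}<0$, and assign the rank-one matrix $u_{ij}u_{ij}^T$ the weight $|m_{ij}|\ge 0$; summing these over all pairs reproduces exactly the off-diagonal entries of $M$ and simultaneously contributes $\sum_{j\ne i}|m_{ij}|$ to each diagonal position $(i,i)$. To finish, I would add $\big(m_{ii}-\sum_{j\ne i}|m_{ij}|\big)e_ie_i^T$ for each $i$; the resulting sum equals $M$, and each of these last coefficients is nonnegative precisely because $M$ is diagonally dominant — this is the only place the hypothesis enters. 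I would then note that this expression uses only $\binom n2+n\le n^2$ of the matrices $V_i$ (the remaining ones receiving weight zero), which matches the index range in the statement, after recalling that $|\mathcal V|=2n^2$ and that $vv^T=(-v)(-v)^T$ collapses $\mathcal V$ to exactly $n^2$ distinct rank-one generators.

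I do not expect a genuine obstacle: the whole argument is elementary linear algebra. The only points demanding care are the bookkeeping in the reverse direction — keeping track of how the off-diagonal-handling terms $|m_{ij}|\,u_{ij}u_{ij}^T$ spill onto the diagonal, so that the leftover diagonal weights come out to be exactly $m_{ii}-\sum_{j\ne i}|m_{ij}|$ — and the observation that this leftover is nonnegative, which is nothing but a restatement of the diagonal-dominance inequality. A secondary nuisance is simply confirming the counting claim $|\mathcal V|=2n^2$ and the passage to $n^2$ matrices $V_i$, so that the stated index set is the right one.
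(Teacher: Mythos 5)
Your proof is correct, but note that the paper itself does not prove this lemma: it is stated with a citation to Barker and Carlson \cite{dd_extreme_rays} and used as a black box in the proof of Theorem~\ref{thm:dsos.dd}, so there is no in-paper argument to compare against. Your self-contained argument is sound in both directions. The ``if'' direction correctly combines the fact that $DD_n$ is a convex cone (via the triangle inequality applied row by row) with the direct verification that each generator $vv^T$ is dd (the inequality $1\ge 1$ in the two active rows for the two-nonzero-entry case). The ``only if'' direction gives the right explicit decomposition: the weights $|m_{ij}|$ on $u_{ij}u_{ij}^T$ reproduce the off-diagonal entries and deposit exactly $\sum_{j\ne i}|m_{ij}|$ on each diagonal entry, so the residual diagonal weights $m_{ii}-\sum_{j\ne i}|m_{ij}|$ are nonnegative precisely by diagonal dominance. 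The counting ($|\mathcal V|=2n+2n(n-1)=2n^2$, collapsing to $n+2\binom n2=n^2$ distinct matrices since $vv^T=(-v)(-v)^T$) also checks out. Incidentally, your construction is essentially the same bookkeeping that the paper performs inside its proof of Theorem~\ref{thm:dsos.dd} (building a dd Gram matrix as a sum of one- and four-entry blocks), so your argument is fully consistent with how the lemma is used downstream.
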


\begin{proof}[Proof of Theorem~\ref{thm:dsos.dd}] Suppose first that $p(x)=z^T(x)Qz(x)$ with $Q$ diagonally dominant. Lemma~\ref{lem:dd.corners} implies that $$p(x)=\sum_{i=1}^{n^2} \left( \eta_i z^T(x) v_i v_i^T z(x)\right)=\sum_{i=1}^{n^2} \eta_i (v_i^T z(x))^2 ,$$ where $\eta_i \geq 0$ and $\{v_i\}$ is the set of all {nonzero} vectors in $\mathbb{R}^n$ with at most $2$ nonzero components, each equal to $\pm 1$. In this sum, the vectors $v_i$ that have only one nonzero entry lead to the first sum in the dsos expansion (\ref{eq:dsos}). The vectors $v_i$ with two $1$s or two $-1$s lead to the second sum, and those with one $1$ and one $-1$ lead to the third.

For the reverse direction, suppose $p$ is dsos, i.e., has the representation in (\ref{eq:dsos}). We will show that $$p(x)=z^T(x)\sum_k Q_k z(x),$$ where each $Q_k$ is dd and corresponds to a single term in the expansion (\ref{eq:dsos}). As the sum of dd matrices is dd, this would establish the proof. For each term of the form $\alpha_i m_i^2(x)$ we can take $Q_k$ to be a matrix of all zeros except for a single diagonal entry corresponding to the monomial $m_i$ in $z(x)$, with this entry equaling $\alpha_i$. For each term of the form $\beta_{ij}^+ (m_i(x)+ m_j(x))^2$, we can take $Q_k$ to be a matrix of all zeros except for four \aaa{entries} corresponding to the monomials $m_i(x)$ and $m_j(x)$ in $z(x)$. All these four entries will be set to $\beta_{ij}^+$. Similarly, for each term of the form $\beta_{ij}^- (m_i(x)- m_j(x))^2$, we can take $Q_k$ to be a matrix of all zeros except for four \aaa{entries} corresponding to the monomials $m_i$ and $m_j(x)$ in $z(x)$. In this $2 \times 2$ submatrix, the diagonal elements will be $\beta_{ij}^-$ and the off-diagonal elements will be $-\beta_{ij}^-$. Clearly, all the $Q_k$ matrices we have constructed are dd.
\end{proof}

\begin{theorem}\label{thm:sdsos.sdd}
A polynomial $p$ of degree $2d$ is sdsos if and only if it admits a representation as $p(x)=z^T(x)Qz(x)$, where $z(x)$ is the standard monomial vector of degree $\leq d$ and $Q$ is a sdd matrix. 
\end{theorem}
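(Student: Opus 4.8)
The plan is to mirror the proof of Theorem~\ref{thm:dsos.dd}, with the Barker--Carlson characterization of diagonally dominant matrices (Lemma~\ref{lem:dd.corners}) replaced by its ``scaled'' analogue. Concretely, I would first record the following structural description of the cone $SDD_n$, which plays the role that Lemma~\ref{lem:dd.corners} played for $DD_n$: a symmetric matrix $M$ is sdd if and only if it can be written as $M=\sum_k \eta_k v_k v_k^T$ with each $\eta_k\ge 0$ and each $v_k\in\mathbb{R}^n$ having at most two nonzero entries (equivalently, $SDD_n$ is exactly the set of symmetric matrices of ``factor width'' at most two). The ``only if'' half of this description is immediate from the definition of sdd together with Lemma~\ref{lem:dd.corners}: if $D$ is a positive diagonal matrix for which $DMD$ is dd, then Lemma~\ref{lem:dd.corners} gives $DMD=\sum_k\eta_k v_k v_k^T$ with $\eta_k\ge 0$ and each $v_k$ having at most two nonzero entries (each equal to $\pm 1$), so $M=D^{-1}(DMD)D^{-1}=\sum_k\eta_k (D^{-1}v_k)(D^{-1}v_k)^T$ and each $D^{-1}v_k$ still has at most two nonzero entries.

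It also helps to note at the outset the elementary observation that, since the scalars $\hat\beta_{ij}^{+},\tilde\beta_{ij}^{+},\hat\beta_{ij}^{-},\tilde\beta_{ij}^{-}$ in~(\ref{eq:sdsos}) are unconstrained reals (only the $\alpha_i$ are required to be nonnegative), the sdsos polynomials are precisely the polynomials expressible as a finite sum of squares of linear forms, each such linear form involving at most two of the monomials in the vector $z(x)$.

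With these two facts in hand, both directions are short and parallel the proof of Theorem~\ref{thm:dsos.dd}. For the ``if'' direction, suppose $p(x)=z^T(x)Qz(x)$ with $Q$ sdd; writing $Q=\sum_k\eta_k v_k v_k^T$ as above and substituting gives $p(x)=\sum_k\eta_k (v_k^T z(x))^2$, and, since each $v_k^T z(x)$ is a linear combination of at most two monomials of $z(x)$, absorbing $\sqrt{\eta_k}$ into the coefficients exhibits each summand either as $\alpha\,m_i^2(x)$ with $\alpha\ge 0$ or as $(\hat\beta\,m_i(x)\pm\tilde\beta\,m_j(x))^2$; this is a decomposition of the type~(\ref{eq:sdsos}), so $p$ is sdsos. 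For the ``only if'' direction, suppose $p$ is sdsos; by the reformulation above, $p=\sum_k\ell_k^2$ with each $\ell_k$ a linear form in at most two monomials of $z(x)$, i.e., $p(x)=z^T(x)Qz(x)$ with $Q=\sum_k c_k c_k^T$, where $c_k$ is the coefficient vector of $\ell_k$ in the basis $z(x)$ and hence has at most two nonzero entries. By the structural description of $SDD_n$, such a $Q$ is sdd, which finishes the proof.

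The bookkeeping that pairs summands of~(\ref{eq:sdsos}) with rank-one, $2\times 2$-supported matrices is entirely routine, exactly as in Theorem~\ref{thm:dsos.dd}. \textbf{The one genuinely nontrivial point} is the ``if'' half of the structural description of $SDD_n$: that every symmetric matrix of factor width at most two is scaled diagonally dominant. Here, unlike in the diagonally dominant case, one cannot simply add up the diagonal scalings that work for the individual $2\times 2$ blocks, since a single positive diagonal $D$ must handle all blocks simultaneously and the local ratio constraints they impose need not be jointly feasible in the naive way. I would dispatch this either by invoking the known characterization of factor-width-two matrices or by a direct argument: construct the scaling $D$ by induction on $n$, peeling off the rank-one terms that touch the last coordinate and carefully tracking the slack left in the remaining diagonal-dominance inequalities, or equivalently cast the existence of $D$ as a linear feasibility problem and dispatch it with a Farkas-type certificate. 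This is the step I expect to be the main obstacle.
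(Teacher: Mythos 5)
Your proposal is correct and follows essentially the same route as the paper: the forward direction uses the positive diagonal scaling to reduce to the diagonally dominant case and then applies the Barker--Carlson decomposition (Lemma~\ref{lem:dd.corners}), while the reverse direction assembles a Gram matrix of factor width at most two and appeals to the equivalence of factor-width-two and sdd. The one step you flag as the main obstacle---that every factor-width-two matrix is sdd---is precisely Theorem~\ref{thm:sdd.factorwidth2}, which the paper does not prove but cites from Boman et al.~\cite{sdd_factorwidth} (and packages as Lemma~\ref{lem:sdd=sum.2x2}), so invoking that known characterization, as you suggest, is exactly what the paper does.
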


\begin{proof}
 Suppose first that $p(x)=z^T(x)Qz(x)$ with $Q$ scaled diagonally dominant. Then, there exists a diagonal matrix $D$, with positive diagonal entries, such that $DQD$ is diagonally dominant and $$p(x)=(D^{-1}z(x))^TDQD(D^{-1}z(x)).$$ Now an argument identical to that in the first part of the proof of Theorem~\ref{thm:dsos.dd} (after replacing $z(x)$ with $D^{-1}z(x)$) gives the desired sdsos representation in (\ref{eq:sdsos}).
 
 For the reverse direction, suppose $p$ is sdsos, i.e., has the representation in (\ref{eq:sdsos}). We show that $$p(x)=z^T(x)\sum_k Q_k z(x),$$ where each $Q_k$ is sdd and corresponds to a single term in the expansion (\ref{eq:sdsos}). As the sum of sdd matrices is sdd,\footnote{This claim is not obvious from the definition but is apparent from Lemma~\ref{lem:sdd=sum.2x2} below which implies that $SDD_n$ is a convex \aaa{(in fact, proper)} cone.} this would establish the proof. Indeed, for each term of the form $\alpha_i m_i^2(x)$ we can take $Q_k$ (once again) to be a matrix of all zeros except for a single diagonal entry corresponding to the monomial $m_i$ in $z(x)$, with this entry equaling $\alpha_i$.

 For each term of the form $(\hat{\beta}_{ij}^+ m_i(x)+ \tilde{\beta}_{ij}^+ m_j(x))^2$, we can take $Q_k$ to be a matrix of all zeros except for four \aaa{entries} corresponding to the monomials $m_i(x)$ and $m_j(x)$ in $z(x)$. This $2\times 2$ block will then be $\begin{pmatrix}
 \hat{\beta}_{ij}^+  \\\tilde{\beta}_{ij}^+ \end{pmatrix} \begin{pmatrix}
  \hat{\beta}_{ij}^+  &\tilde{\beta}_{ij}^+ \end{pmatrix} $.  Similarly, for each term of the form $ (\hat{\beta}_{ij}^- m_i(x)- \tilde{\beta}_{ij}^- m_j(x))^2$, we can take $Q_k$ to be a matrix of all zeros except for four \aaa{entries} corresponding to the monomials $m_i$ and $m_j(x)$ in $z(x)$. This $2\times 2$ block will then be $\begin{pmatrix}
   \hat{\beta}_{ij}^-  \\-\tilde{\beta}_{ij}^- \end{pmatrix} \begin{pmatrix}
    \hat{\beta}_{ij}^-  &-\tilde{\beta}_{ij}^- \end{pmatrix} $. It remains to show that a $2\times 2$ rank-1 positive semidefinite matrix is sdd.\footnote{This also implies that any $2\times 2$ positive semidefinite matrix is sdd.} Let $A=\begin{pmatrix}
     a \\b \end{pmatrix} \begin{pmatrix}
      a  &b \end{pmatrix} $ be such a matrix. Observe that $$\begin{pmatrix}\frac{1}{a} &0\\0& \frac{1}{b} \end{pmatrix} A \begin{pmatrix} \frac{1}{a} &0\\0& \frac{1}{b} \end{pmatrix} =\begin{pmatrix} 1& 1\\1& 1 \end{pmatrix}   $$ is dd and hence $A$ is by definition sdd. (Note that if $a$ or $b$ are zero, then $A$ is already dd.)
\end{proof}

The following characterizations of sdd matrices will be important for us.

\begin{theorem}[see theorems 8 and 9 by Boman et al. \cite{sdd_factorwidth}]\label{thm:sdd.factorwidth2}
A symmetric matrix $Q$ is sdd if and only if it has ``factor width'' at most 2; i.e., a factorization $Q=VV^T$, where each column of $V$ has at most two nonzero entries.
\end{theorem}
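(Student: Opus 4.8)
The plan is to prove the two implications separately, in both cases leaning on the extreme-ray description of diagonally dominant matrices from Lemma~\ref{lem:dd.corners} together with the trivial observation that multiplying a vector by a diagonal matrix cannot enlarge its support.

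For the ``only if'' direction, suppose $Q$ is sdd, so there is a positive-diagonal $D$ with $DQD$ diagonally dominant. Lemma~\ref{lem:dd.corners} writes $DQD=\sum_i \eta_i v_iv_i^T$ with $\eta_i\geq 0$ and each $v_i$ having at most two nonzero entries. Conjugating by $D^{-1}$ gives $Q=D^{-1}(DQD)D^{-1}=\sum_i \eta_i (D^{-1}v_i)(D^{-1}v_i)^T$, and since $D^{-1}$ is diagonal each $D^{-1}v_i$ still has at most two nonzero entries. Setting $w_i\mathrel{\mathop:}=\sqrt{\eta_i}\,D^{-1}v_i$ and letting $V$ be the matrix whose columns are the $w_i$, we get $Q=VV^T$ with every column of $V$ having at most two nonzero entries, i.e.\ factor width at most $2$.

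For the ``if'' direction, write $Q=VV^T=\sum_k v_kv_k^T$, where the $v_k$ are the columns of $V$, each with at most two nonzero entries. Each summand $v_kv_k^T$ is positive semidefinite and supported on an at-most-$2\times 2$ principal submatrix: when $v_k$ has one nonzero entry it is a nonnegative multiple of a rank-one diagonal matrix, hence dd; when it has two nonzero entries in positions $i,j$, its $\{i,j\}$ principal block is a rank-one $2\times 2$ psd matrix, which is sdd exactly as shown at the end of the proof of Theorem~\ref{thm:sdsos.sdd} (rescale the $i$th and $j$th diagonal entries by the reciprocals of the two nonzero components of $v_k$; the zero rows and columns make the remaining diagonal-dominance inequalities hold for any positive scalings). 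Thus each $v_kv_k^T$ is sdd, and therefore so is $Q$, as a sum of sdd matrices.

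The step that is not automatic here is the very last one: the sdd cone must be closed under addition, yet no common diagonal scaling is visible term by term (indeed, even a single rank-one psd matrix such as $\left(\begin{smallmatrix}1&2\\2&4\end{smallmatrix}\right)$ is sdd without being dd, so one genuinely has to rescale). This is precisely the role of the block characterization recorded in Lemma~\ref{lem:sdd=sum.2x2} below --- that $Q$ is sdd if and only if it is a sum of $n\times n$ psd matrices each vanishing outside some $2\times 2$ principal submatrix --- which in particular shows $SDD_n$ is a convex cone. Granting that characterization, the whole theorem becomes almost immediate in both directions: ``if'' because each $v_kv_k^T$ is already such a block matrix, and ``only if'' because each block $M_k$ decomposes spectrally into at most two rank-one matrices whose eigenvectors live on the two relevant coordinates, all of which are then gathered as the columns of $V$. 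So I expect the main obstacle to be establishing (or correctly invoking) this convex-cone / block-sum property of sdd matrices; everything else is bookkeeping.
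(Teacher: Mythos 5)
The paper does not actually prove this statement: it is quoted from Boman et al.\ \cite{sdd_factorwidth}, and the paper's own Lemma~\ref{lem:sdd=sum.2x2} is then \emph{derived from} it. Your ``only if'' direction is correct and self-contained: conjugating the Barker--Carlson decomposition of $DQD$ by the diagonal matrix $D^{-1}$ preserves the support of each rank-one factor, so an sdd matrix indeed has factor width at most $2$. That half is a genuine argument that the paper itself does not supply.

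The ``if'' direction, however, has a real gap, and you have correctly located it without closing it. You reduce the claim to ``a sum of sdd matrices is sdd'' (equivalently, to the block-sum characterization of Lemma~\ref{lem:sdd=sum.2x2}) and then propose to invoke that lemma. But within the paper Lemma~\ref{lem:sdd=sum.2x2} is proved \emph{using} Theorem~\ref{thm:sdd.factorwidth2}, so the invocation is circular: the assertion that finitely many $2\times 2$-block psd matrices, each admitting its own diagonal scaling, admit a single \emph{common} positive diagonal $D$ making their sum diagonally dominant is precisely the nontrivial content of Boman et al.'s result. Your own example $\left(\begin{smallmatrix}1&2\\2&4\end{smallmatrix}\right)$ shows why no term-by-term argument works, but nothing in the proposal constructs the common scaling. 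Closing the gap requires an independent argument --- for instance, showing that the comparison matrix of a factor-width-$2$ matrix is positive semidefinite and then extracting the scaling vector via the Perron--Frobenius theory of (possibly singular or reducible) M-matrices --- none of which appears in the proposal or in the paper. As written, one half of the equivalence rests on an unproved and, in the paper's logical order, circularly invoked claim.
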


\begin{lemma}\label{lem:sdd=sum.2x2}
A symmetric $n\times n$ matrix $Q$ is sdd if and only if it can be expressed as 	\begin{equation}\label{eq:sdd.Q=sum.2x2}
Q = \sum_{i< j} M^{ij},
\end{equation}
where each $ M^{ij}$ is an $n\times n$ matrix with zeros everywhere except for four entries $(M^{ij})_{ii}$, $(M^{ij})_{ij}$, $(M^{ij})_{ji}$, $(M^{ij})_{jj}$, which make the $2\times 2$ matrix $\begin{bmatrix} (M^{ij})_{ii} & (M^{ij})_{ij} \\ (M^{ij})_{ji} & (M^{ij})_{jj} \end{bmatrix}$ symmetric and positive semidefinite.
\end{lemma}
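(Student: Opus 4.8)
The plan is to derive this lemma as an essentially immediate corollary of the factor-width-$2$ characterization in Theorem~\ref{thm:sdd.factorwidth2}, with the only real work being bookkeeping about supports. So I would not reprove anything about diagonal dominance directly; instead I would translate ``$Q$ is sdd'' into ``$Q=VV^T$ with each column of $V$ having at most two nonzero entries'' and move rank-one pieces around.

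\emph{The ($\Leftarrow$) direction.} Suppose $Q=\sum_{i<j}M^{ij}$ with each $M^{ij}$ symmetric, positive semidefinite, and zero outside the $2\times 2$ principal submatrix indexed by $\{i,j\}$. Since a $2\times 2$ psd matrix has a rank-$\le 2$ factorization (e.g.\ via Cholesky or an eigendecomposition), and $M^{ij}$ lives in the $\{i,j\}$ block, I can write $M^{ij}=V^{ij}(V^{ij})^T$ where $V^{ij}$ is an $n\times (\le 2)$ matrix whose nonzero entries occur only in rows $i$ and $j$. Concatenating all the $V^{ij}$ horizontally produces an $n\times N$ matrix $V$ with $VV^T=\sum_{i<j}M^{ij}=Q$ and each column of $V$ having at most two nonzero entries. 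Theorem~\ref{thm:sdd.factorwidth2} then gives that $Q$ is sdd.

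\emph{The ($\Rightarrow$) direction.} Suppose $Q$ is sdd. By Theorem~\ref{thm:sdd.factorwidth2}, $Q=VV^T$ where each column $v_\ell$ of $V$ has at most two nonzero entries, so $Q=\sum_\ell v_\ell v_\ell^T$ where each rank-one summand $v_\ell v_\ell^T$ is psd and supported on the principal submatrix indexed by its support $S_\ell$, with $|S_\ell|\le 2$. For each pair $i<j$, I would let $M^{ij}$ be the sum of $v_\ell v_\ell^T$ over all $\ell$ with $S_\ell=\{i,j\}$; a sum of psd matrices all supported on the $\{i,j\}$ block is again psd and supported there, so $M^{ij}$ has the required form, and $\sum_{i<j}M^{ij}=Q$ provided every summand with $|S_\ell|\le 1$ has also been accounted for. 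To handle those, I assign each rank-one term with $S_\ell=\{i\}$ (a single nonzero diagonal entry) to exactly one pair $(i,j)$ — e.g.\ $j=i+1$ when $i<n$ and $j=n-1$ when $i=n$ — which is possible as soon as $n\ge 2$; adding such a psd term supported on $(i,i)$ to $M^{ij}$ keeps $M^{ij}$ psd and supported on the $\{i,j\}$ block. (The case $n=1$ is degenerate: the sum in \eqref{eq:sdd.Q=sum.2x2} is empty and the statement should be read/handled separately, or one simply notes $\mathrm{SDD}_1$ equals the nonnegative $1\times 1$ matrices.)

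\emph{Expected main obstacle.} There is nothing deep here — the content is all in Theorem~\ref{thm:sdd.factorwidth2}. The only thing to be careful about is the combinatorial bookkeeping: consistently assigning the single-nonzero (``diagonal-only'') columns of $V$ to some pair $(i,j)$ so that every rank-one piece $v_\ell v_\ell^T$ is counted exactly once, and checking that the $2\times 2$ factorization of a psd block embeds back into $\mathbb{R}^{n\times n}$ with exactly the support $\{i,j\}$. An alternative route that avoids Theorem~\ref{thm:sdd.factorwidth2} would be to argue directly from the definition of sdd (scale to dd, then use the extreme-ray description in Lemma~\ref{lem:dd.corners}), but this is strictly more work and the factor-width argument is the clean one.
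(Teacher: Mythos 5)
Your proposal is correct and follows essentially the same route as the paper: both directions are read off from the factor-width-$2$ characterization in Theorem~\ref{thm:sdd.factorwidth2} by splitting $Q=VV^T$ into rank-one pieces $v_\ell v_\ell^T$ and, conversely, factoring each $M^{ij}$ into at most two such pieces and concatenating. Your extra bookkeeping (grouping columns by support and assigning the diagonal-only columns to some pair $i<j$) is a detail the paper's proof elides, and is handled correctly apart from a harmless index slip in the $i=n$ case, where the intended pair is $\{n-1,n\}$.
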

\begin{proof}
This is almost immediate from Theorem~\ref{thm:sdd.factorwidth2}: If $Q=VV^T$ for some $n\times k$ matrix $V$, then $Q=\sum_{l=1}^k v_l v_l^T,$ where $v_l$ denotes the $l$-th column of $V$. Since each column $v_l$ has at most two nonzero entries, say in positions $i$ and $j$, each matrix $v_lv_l^T$ will be exactly of the desired form $M^{ij}$ in the statement of the lemma. Conversely, if $Q$ can be written as in (\ref{eq:sdd.Q=sum.2x2}), then we can write each $M^{ij}$ as $M^{ij}=w_{ij,1}w_{ij,1}^T+w_{ij,2}w_{ij,2}^T,$ where the vectors $w_{ij,1}$ and $w_{ij,2}$ have at most two nonzero entries. If we then construct a matrix $V$ which has the collection of the vectors $w_{ij,1}$ and $w_{ij,2}$ as columns, we will have $Q=VV^T.$
\end{proof}

\begin{theorem}\label{thm:dsos.sdsos.LP.SOCP}
For any fixed $d$, optimization over $DSOS_{n,2d}$ (resp. $SDSOS_{n,2d}$) can be done with a linear program (resp. second order cone program) of size polynomial in $n$.
\end{theorem}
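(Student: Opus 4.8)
The plan is to reduce optimization over $DSOS_{n,2d}$ and $SDSOS_{n,2d}$ to the matrix-level statements established in Theorems~\ref{thm:dsos.dd} and~\ref{thm:sdsos.sdd}, and then show that optimizing a linear functional subject to a dd (resp.\ sdd) constraint on a Gram matrix is an LP (resp.\ SOCP). First I would recall the setup: a generic optimization problem over $DSOS_{n,2d}$ amounts to choosing the coefficients of a polynomial $p$ (which appear linearly in a finite set of decision variables) so as to optimize a linear objective subject to $p\in DSOS_{n,2d}$ together with possibly other affine constraints on the coefficients. By Theorem~\ref{thm:dsos.dd}, $p\in DSOS_{n,2d}$ if and only if there exists a symmetric matrix $Q$ that is diagonally dominant and satisfies $p(x)=z^T(x)Qz(x)$, where $z(x)$ is the vector of monomials of degree at most $d$. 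The latter identity is a finite system of \emph{linear} equations relating the entries of $Q$ to the coefficients of $p$ (matching coefficients of monomials on both sides). So the feasible set in the lifted space of $(p,Q)$ is cut out by affine equations together with the constraint $Q\in DD_N$, where $N={n+d\choose d}$.

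Next I would observe that $DD_N$ is a polyhedral cone with a linear-programming description: $Q=(q_{ij})\in DD_N$ iff $q_{ii}\ge \sum_{j\ne i}|q_{ij}|$ for all $i$. Each such absolute-value constraint is equivalent to $2^{N-1}$ linear inequalities, which is exponential; to keep things polynomial I would instead introduce auxiliary variables $t_{ij}$, $i<j$, and impose $t_{ij}\ge q_{ij}$, $t_{ij}\ge -q_{ij}$, and $q_{ii}\ge \sum_{j\ne i} t_{ij}$. This is a standard epigraph reformulation of $\ell_1$-type constraints; it uses $O(N^2)$ extra variables and $O(N^2)$ extra linear inequalities, all of size polynomial in $n$ since $N$ is polynomial in $n$ for fixed $d$. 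Combining these with the affine equations from $p(x)=z^T(x)Qz(x)$ and the original linear objective and constraints yields a linear program whose number of variables and constraints is polynomial in $n$, proving the $DSOS$ case.

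For the $SDSOS$ case I would argue analogously but invoke Lemma~\ref{lem:sdd=sum.2x2} in place of the dd inequalities. By Theorem~\ref{thm:sdsos.sdd}, $p\in SDSOS_{n,2d}$ iff $p(x)=z^T(x)Qz(x)$ for some sdd matrix $Q$, and by Lemma~\ref{lem:sdd=sum.2x2} the sdd constraint is equivalent to $Q=\sum_{i<j}M^{ij}$, where each $M^{ij}$ is supported on the $(i,i),(i,j),(j,i),(j,j)$ entries and the corresponding $2\times 2$ block $\begin{bmatrix} a_{ij} & b_{ij}\\ b_{ij} & c_{ij}\end{bmatrix}$ is symmetric PSD. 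Positive semidefiniteness of a $2\times 2$ symmetric matrix $\begin{bmatrix}a & b\\ b& c\end{bmatrix}$ is equivalent to $a\ge 0$, $c\ge 0$, $ac\ge b^2$, and this last rotated-quadratic condition is exactly a (three-dimensional) second-order cone constraint, since $ac\ge b^2$, $a,c\ge 0$ is the same as $\left\| \begin{pmatrix} 2b\\ a-c\end{pmatrix}\right\| \le a+c$. Introducing the $O(N^2)$ triples $(a_{ij},b_{ij},c_{ij})$ as decision variables, imposing one rotated SOC constraint per pair, and adding the linear equations $Q=\sum_{i<j}M^{ij}$ and $p(x)=z^T(x)Qz(x)$ together with the original affine data gives a second-order cone program of size polynomial in $n$.

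I expect the only genuinely delicate point — though still routine — to be bookkeeping the size estimates: one must note that $N={n+d\choose d}$ is polynomial in $n$ for fixed $d$, that the number of monomials appearing in $p$ of degree at most $2d$ is likewise $O(n^{2d})$, and hence that the number of coefficient-matching equations, the number of auxiliary variables $t_{ij}$ or $(a_{ij},b_{ij},c_{ij})$, and the number of cone/linear constraints are all polynomial in $n$. The conceptual content is entirely carried by the already-proved structural theorems (Theorems~\ref{thm:dsos.dd} and~\ref{thm:sdsos.sdd} and Lemma~\ref{lem:sdd=sum.2x2}); the rest is the observation that the $\ell_1$ epigraph trick and the $2\times 2$ PSD-to-rotated-SOC reformulation introduce only polynomially many linear or second-order-cone constraints. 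I would close by remarking that the same argument handles the more general setting of Theorem~\ref{thm:dsos.sdsos.LP.SOCP} where additional affine constraints on the polynomial's coefficients (e.g.\ coming from Positivstellensatz-type representations) are present, since those merely add more affine constraints to the LP or SOCP.
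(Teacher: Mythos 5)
Your proposal is correct and follows essentially the same route as the paper's own proof: it invokes Theorems~\ref{thm:dsos.dd} and~\ref{thm:sdsos.sdd} to reduce to a dd/sdd constraint on the Gram matrix, handles the dd case with the standard auxiliary-variable reformulation of the absolute values, handles the sdd case via Lemma~\ref{lem:sdd=sum.2x2} and the rotated second-order cone representation of $2\times 2$ PSD blocks, and concludes with the observation that ${n+d\choose d}$ is polynomial in $n$ for fixed $d$. No gaps.
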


\begin{proof}
We will use the characterizations of dsos/sdsos polynomials given in theorems~\ref{thm:dsos.dd} and~\ref{thm:sdsos.sdd}. In both cases, the equality $p(x)=z^T(x)Qz(x),\forall x$ can be imposed by a finite set of \emph{linear equations} in  the coefficients of $p$ and the entries of $Q$ (these match the coefficients of $p(x)$ with those of $z^T(x)Qz(x)$). The constraint that $Q$ be dd can be imposed, e.g., by a set of linear inequalities 
\begin{equation}
\nonumber
\begin{array}{ll}
\ & Q_{ii}\geq \sum_{j\neq i} z_{ij},\forall i, \\
\ & -z_{ij}\leq Q_{ij}\leq z_{ij}, \forall i,j, i\neq j
\end{array}
\end{equation}
in variables $Q_{ij}$ and $z_{ij}$. This gives a linear program. 

The constraint that $Q$ be sdd can be imposed via Lemma~\ref{lem:sdd=sum.2x2} by a set of equality constraints that enforce equation (\ref{eq:sdd.Q=sum.2x2}). The constraint that each $2\times 2$ matrix $\begin{bmatrix} (M^{ij})_{ii} & (M^{ij})_{ij} \\ (M^{ij})_{ji} & (M^{ij})_{jj} \end{bmatrix}$ be psd is a \emph{``rotated quadratic cone''} constraint and can be imposed using SOCP~\cite{socp_alizadeh_goldfarb, socp_boyd}:
$$(M^{ij})_{ii}+(M^{ij})_{jj}\geq 0, ~\Bigl\lvert\Bigl\lvert\begin{pmatrix}
2(M^{ij})_{ij}\\(M^{ij})_{ii}-(M^{ij})_{jj}
\end{pmatrix}\Bigl\lvert\Bigl\lvert \leq (M^{ij})_{ii}+(M^{ij})_{jj}.$$
In both cases, the final LP and SOCP are of polynomial size because the size of the Gram matrix is ${n+d\choose d}$, which is polynomial in $n$ for fixed $d$.
\end{proof}


We have written publicly-available code that automates the process of generating LPs (resp. SOCPs) from dsos (resp. sdsos) constraints on a polynomial. More information about this can be found in the Supplementary Material.

 The ability to replace SDPs with LPs and SOCPs is what results in significant speedups in our numerical experiments (see Section~\ref{sec:experiments}). For the special case where the polynomials involved are quadratic forms, we get a systematic way of inner approximating semidefinite programs. A generic SDP that minimizes $ \mbox{Tr}(CX)$ subject to the constraints $\mbox{Tr}(A_iX)=b_i, i=1,\ldots,m,$ and $X\succeq 0,$ can be replaced by\footnote{We are grateful to Leo Liberti for suggesting the terminology ``DDP and SDDP''.}

\begin{itemize}
\item a \emph{diagonally dominant program} (DDP); i.e., a problem of the form
\begin{flalign} 
\label{eq:ddp}
			\underset{X\in\mathcal{S}_n}{\text{minimize}} \hspace*{1cm} & \mbox{Tr}(CX)  \\
			\text{s.t.} \hspace*{1cm} & \mbox{Tr}(A_iX)=b_i,\quad  i=1,\ldots,m, \nonumber \\
			& X \quad \mbox{dd}, \nonumber
\end{flalign}

which is an LP, or

\item a \emph{scaled diagonally dominant program} (SDDP); i.e., a problem of the form
\begin{flalign} 
\label{eq:sddp}
			\underset{X\in\mathcal{S}_n}{\text{minimize}} \hspace*{1cm} & \mbox{Tr}(CX)  \\
			\text{s.t.} \hspace*{1cm} & \mbox{Tr}(A_iX)=b_i,\quad  i=1,\ldots,m, \nonumber \\
			& X \quad \mbox{sdd}, \nonumber
\end{flalign}
which is an SOCP.

\end{itemize}


\aaa{DDP and SDDP fit nicely within the framework of \emph{conic programming}~\cite[Chap. 2]{bental_nemirov_book} as they are optimization problems over the intersection of an affine subspace with a proper cone ($DD_n$ or $SDD_n$). The description of the associated dual cones can be found e.g. in~\cite{permenter_parrilo_facial_reduction},~\cite{basis_pursuit_Ahmadi_Hall}. In Section~\ref{subsec:experiments.options.pricing} and Section~\ref{subsec:experiments.sparse.pca}, we show some applications of DDP/SDDP to problems in finance and statistics.} 


\subsection{The cone of r-dsos and r-sdsos polynomials and asymptotic guarantees} \label{subsec:rdsos.rsdsos} We now present a hierarchy of cones based on the notions of dsos and sdsos polynomials that can be used to better approximate the cone of nonnegative polynomials.

\begin{definition} \label{def:rdsos.rsdsos} 
For an integer $r\geq 0$, we say that a polynomial $p\mathrel{\mathop:}=p(x_1,\ldots,x_n)$ is 
\emph{r-dsos} (resp. \emph{r-sdsos}) if $$p(x)\cdot(\sum_{i=1}^n x_i^2)^r$$ is dsos (resp. sdsos). We denote the set of polynomials in $n$ variables and degree $2d$ that are r-dsos (resp. r-sdsos) by $rDSOS_{n,2d}$ (resp. $rDSOS_{n,2d}$).
\end{definition}

Note that for $r=0$ we recover our dsos/sdsos definitions. Moreover, because the multiplier $(\sum_{i=1}^n x_i^2)^r$ is nonnegative, we see that for any $r$, the property of being r-dsos or r-sdsos is a sufficient condition for nonnegativity:
$$rDSOS_{n,2d}\subseteq rSDSOS_{n,2d} \subseteq PSD_{n,2d}.$$

Optimizing over r-dsos (resp. r-sdsos) polynomials is still an LP (resp. SOCP). The proof of the following theorem is identical to that of Theorem~\ref{thm:dsos.sdsos.LP.SOCP} and hence omitted.

\begin{theorem}
For any fixed $d$ and $r$, optimization over the set $rDSOS_{n,d}$ (respectively $rSDSOS_{n,d}$) can be done with linear programming (resp. second order cone programming) of size polynomial in $n$.
\end{theorem}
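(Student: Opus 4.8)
The plan is to mimic the proof of Theorem~\ref{thm:dsos.sdsos.LP.SOCP} essentially verbatim, the only new ingredient being the observation that multiplication by the fixed multiplier $(\sum_{i=1}^n x_i^2)^r$ is a linear operation on coefficients that does not destroy polynomiality of the problem size. Concretely, fix a polynomial $p$ of degree $d$ whose coefficients are the decision variables, and let $q(x) \mathrel{\mathop:}= p(x)\cdot(\sum_{i=1}^n x_i^2)^r$. Then $q$ is a polynomial of degree $d+2r$ whose coefficients are \emph{linear} functions of the coefficients of $p$ (the map is just the linear convolution with the fixed coefficient array of $(\sum_i x_i^2)^r$). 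By definition, $p\in rDSOS_{n,d}$ (resp. $rSDSOS_{n,d}$) if and only if $q\in DSOS_{n,d+2r}$ (resp. $SDSOS_{n,d+2r}$).

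Next I would invoke Theorem~\ref{thm:dsos.dd} (resp. Theorem~\ref{thm:sdsos.sdd}) to replace the dsos (resp. sdsos) condition on $q$ by the statement that there is a symmetric matrix $Q$ with $q(x)=z^T(x)Qz(x)$ and $Q$ dd (resp. sdd), where $z(x)$ is the standard vector of monomials of degree at most $(d+2r)/2$. The key size bookkeeping: the length of $z(x)$ is $\binom{n+\lfloor (d+2r)/2\rfloor}{\lfloor (d+2r)/2\rfloor}$, which for fixed $d$ and $r$ is a polynomial in $n$, so $Q$ has polynomially many entries. The identity $q(x)=z^T(x)Qz(x)$, together with the linear dependence of the coefficients of $q$ on those of $p$, becomes a finite system of \emph{linear equality} constraints in the coefficients of $p$ and the entries of $Q$.

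Finally I would impose the cone membership of $Q$ exactly as in the proof of Theorem~\ref{thm:dsos.sdsos.LP.SOCP}: the constraint $Q\in DD_n$ is encoded by the linear inequalities $Q_{ii}\geq\sum_{j\neq i}z_{ij}$ and $-z_{ij}\leq Q_{ij}\leq z_{ij}$ in auxiliary variables $z_{ij}$, yielding a linear program; and the constraint $Q\in SDD_n$ is encoded, via Lemma~\ref{lem:sdd=sum.2x2}, by linear equalities expressing $Q=\sum_{i<j}M^{ij}$ together with $2\times 2$ rotated second-order-cone constraints on each block $\begin{bmatrix} (M^{ij})_{ii} & (M^{ij})_{ij} \\ (M^{ij})_{ji} & (M^{ij})_{jj}\end{bmatrix}$, yielding a second-order cone program. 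In both cases the number of variables and constraints is controlled by $\binom{n+\lfloor(d+2r)/2\rfloor}{\lfloor(d+2r)/2\rfloor}$ and hence is polynomial in $n$.

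There is essentially no hard step here; the only thing to be slightly careful about is making explicit that passing from $p$ to $q=p\cdot(\sum_i x_i^2)^r$ preserves linearity in the coefficients and only inflates the degree (and thus the Gram matrix size) by the fixed amount $2r$, so that the polynomial-in-$n$ size bound of Theorem~\ref{thm:dsos.sdsos.LP.SOCP} carries over with $2d$ replaced by $d+2r$. Since the rest of the argument is a word-for-word repetition of that theorem's proof, the statement follows, and this is why the authors omit the details.
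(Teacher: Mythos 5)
Your proposal is correct and follows exactly the route the paper intends: the paper omits the proof, stating only that it is identical to that of Theorem~\ref{thm:dsos.sdsos.LP.SOCP}, and your write-up supplies precisely the missing bookkeeping (linearity of multiplication by the fixed multiplier $(\sum_i x_i^2)^r$ and the degree inflation from $d$ to $d+2r$, keeping the Gram matrix size polynomial in $n$ for fixed $d$ and $r$). No discrepancies to report.
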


If we revisit the parametric family of bivariate quartics in (\ref{eq:parametric.bivariate.quartic}), the improvements obtained by going one level higher in the hierarchy are illustrated in Figure~\ref{fig:1dsos.1sdsos}. Interestingly, for $r\geq 1$, r-dsos and r-sdsos tests for nonnegativity can sometimes outperform the sos test. The following two examples illustrate this.

\begin{figure}
\begin{center}
    \mbox{
      \subfigure[The LP-based r-dsos hierarchy.]
      {\label{subfig:1dsos}\scalebox{0.25}{\includegraphics{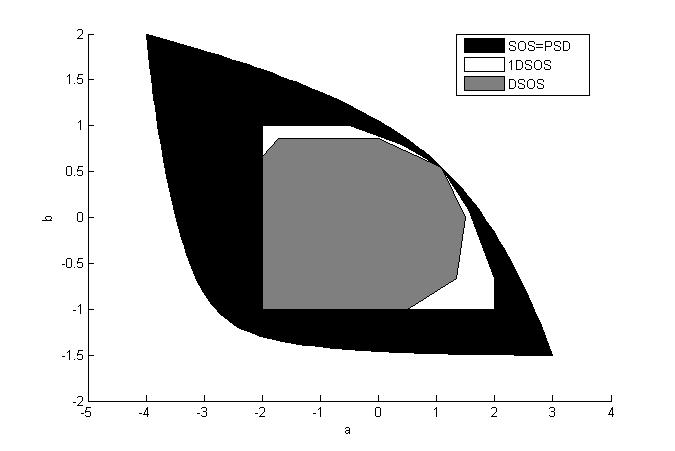}}}}
\mbox{
      \subfigure[The SOCP-based r-sdsos hierarchy.]
      {\label{subfig:1sdsos}\scalebox{0.25}{\includegraphics{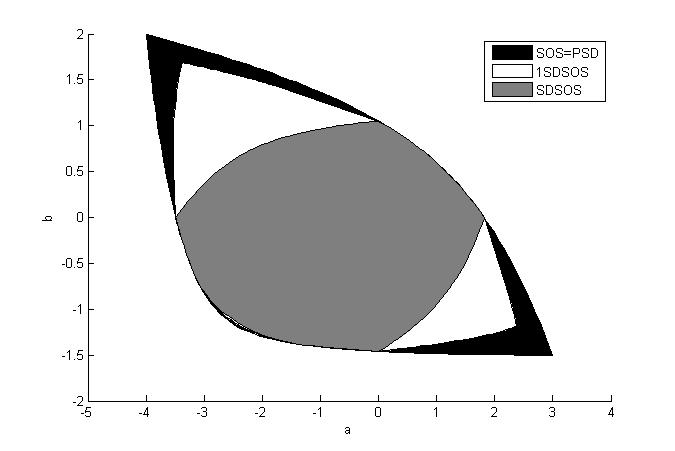}}}
} 

    \caption{The improvement obtained by going one level up in the r-dsos/r-sdsos hierarchies for the family of bivariate quartics in (\ref{eq:parametric.bivariate.quartic}).}
\label{fig:1dsos.1sdsos}
\end{center}
\vspace{-20pt}
\end{figure}


\begin{Example}
	\ani{Consider the Motzkin polynomial, $M(x)=x_1^4x_2^2+x_1^2x_2^4-3x_1^2x_2^2x_3^2+x_3^6,$ which historically is the first known example of a nonnegative polynomial that is \emph{not} a sum of squares~\cite{MotzkinSOS}. The following decomposition (found by solving an LP) shows that $M\in 2DSOS_{3,6}$ and thus that $M$ is nonnegative:
\begin{align}
M(x)\cdot\Big{(}\sum_{i=1}^3 x_i^2\Big{)}^2 = &\frac{1}{2}(x_3^5-x_3^3 x_2^2)^2 + 
         \frac{1}{2}(x_2^4 x_1-x_3^4 x_1)^2 + 
         \frac{1}{2}(x_2^4 x_1-x_3^2 x_2^2 x_1)^2 \nonumber \\
         +&\frac{1}{2}(x_3^5-x_3^3 x_1^2)^2 + 
         \frac{1}{2}(x_3^3 x_1^2-x_3^3 x_2^2)^2 + \nonumber 
         \frac{5}{2}(x_2^3 x_1^2-x_3^4 x_2)^2 \\
         +&\frac{1}{2}(x_2^3 x_1^2-x_3^2 x_2 x_1^2)^2 + 
         \frac{5}{2}(x_2^2 x_1^3-x_3^4 x_1)^2 + \nonumber
         \frac{1}{2}(x_2^2 x_1^3-x_3^2 x_2^2 x_1)^2  \\
         +&\frac{1}{2}(x_2 x_1^4-x_3^4 x_2)^2 + 
         \frac{1}{2}(x_2 x_1^4-x_3^2 x_2 x_1^2)^2. \nonumber
\end{align}	
This proves that $2DSOS_{3,6}\nsubseteq SOS_{3,6}$. 	}
\end{Example}

\ani{
\begin{Example}
	Consider the polynomial $p(x)=x_1^4x_2^2+x_2^4x_3^2+x_3^4x_1^2-3x_1^2x_2^2x_3^2$. Once again this polynomial is nonnegative but \emph{not} a sum of squares~\cite{Reznick}. The following decomposition, which was obtained by solving an LP, shows that $p\in 1DSOS_{3,6}$ and thus that $p$ is nonnegative:
\begin{align}
p(x)\cdot\Big{(}\sum_{i=1}^3 x_i^2\Big{)} = &(x_3 x_2^2 x_1-x_3^3 x_1)^2 + 
         \frac{1}{2}(x_3^2 x_1^2-x_3^2 x_2^2)^2 + 
         (x_3 x_2^3-x_3 x_2 x_1^2)^2 \nonumber   \\
         +&\frac{1}{2}(x_2^2 x_1^2-x_3^2 x_2^2)^2 + 
         \frac{1}{2}(x_2^2 x_1^2-x_3^2 x_1^2)^2 + 
         (x_2 x_1^3-x_3^2 x_2 x_1)^2. \nonumber
\end{align}	
This proves that $1DSOS_{3,6}\nsubseteq SOS_{3,6}$. 
\end{Example}
}

It is natural to ask whether every nonnegative polynomial is r-dsos (or r-sdsos) for some $r$? The following theorems (Theorem~\ref{thm:even.forms.rdsos} and \ref{thm:p.dsos.Polya}) deal with this question and provide asymptotic guarantees on r-dsos (and hence r-sdsos) hierarchies. 



\begin{theorem}\label{thm:even.forms.rdsos}
Let $p$ be an even\footnote{An even polynomial \aaa{(resp. monomial)} is a polynomial \aaa{(resp. monomial)} whose individual variables are raised only to even degrees.} positive definite form. Then, there exists an integer $r$ for which $p$ is r-dsos (and hence r-sdsos). 
\end{theorem}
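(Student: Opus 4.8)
The plan is to reduce the statement to P\'olya's theorem on positive forms on the simplex, via the substitution $y_i=x_i^2$. Since $p$ is even, every exponent vector $\alpha$ occurring in $p$ has all even entries, so writing $\alpha=2\beta$ one may define a form $q$ in new variables $y\mathrel{\mathop:}=(y_1,\ldots,y_n)$ by $q(y)\mathrel{\mathop:}=\sum_\beta c_{2\beta}\,y^\beta$, so that $p(x)=q(x_1^2,\ldots,x_n^2)$ identically; if $p$ has degree $2d$, then $q$ is a form of degree $d$. Because $p$ is positive definite, $q(y)=p(\sqrt{y_1},\ldots,\sqrt{y_n})>0$ for every $y$ in the nonnegative orthant with $y\neq 0$. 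In particular $q$ is strictly positive on the standard simplex $\Delta\mathrel{\mathop:}=\{y:\ y_i\geq 0,\ \sum_i y_i=1\}$, which is a compact subset of that orthant.

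Next I would invoke P\'olya's theorem: if a form is strictly positive on $\Delta$, then for some integer $N$ the form $(y_1+\cdots+y_n)^N q(y)$ has all nonnegative coefficients. Substituting $y_i=x_i^2$ back in shows that the polynomial $\big(\sum_{i=1}^n x_i^2\big)^N p(x)$ has all nonnegative coefficients. Moreover every monomial occurring in this product is even: $p$ is even, the expansion of $\big(\sum_i x_i^2\big)^N$ is a sum of even monomials, and a product of even monomials is even. Hence $\big(\sum_i x_i^2\big)^N p(x)=\sum_i c_i\, m_i^2(x)$ with all $c_i\geq 0$, where $m_i$ ranges over the monomials whose squares appear. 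This is exactly a dsos decomposition in the sense of~(\ref{eq:dsos}), using only the first sum, so $\big(\sum_i x_i^2\big)^N p$ is dsos, i.e.\ $p$ is $r$-dsos with $r=N$. Since $DSOS_{n,2d}\subseteq SDSOS_{n,2d}$, $p$ is also $r$-sdsos.

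The only nontrivial ingredient is P\'olya's theorem itself, which I would cite rather than reprove (a quantitative bound on $N$ in terms of the minimum of $q$ on $\Delta$ and the coefficients of $q$ is available in the literature, should an explicit $r$ be desired). Everything else is routine bookkeeping: $q$ is a genuine polynomial and a form of degree $d$ by evenness of $p$, and evenness is preserved under multiplication by $\big(\sum_i x_i^2\big)^N$. The conceptual point that makes the even case clean---and that is precisely what fails for general positive definite forms---is that P\'olya's conclusion (nonnegative coefficients) already delivers a dsos certificate once all monomials are even, since an even monomial is literally the square of a monomial. So I do not anticipate a genuine obstacle beyond setting up the substitution correctly and quoting P\'olya.
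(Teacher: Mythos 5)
Your proposal is correct and follows essentially the same route as the paper's proof: substitute $y_i=x_i^2$ (equivalently, consider $p(\sqrt{x_1},\ldots,\sqrt{x_n})$), apply P\'olya's theorem on the simplex, and observe that the resulting nonnegative-coefficient even form is a nonnegative combination of squared monomials, hence dsos via the first sum in~(\ref{eq:dsos}). Your write-up is somewhat more explicit than the paper's about why evenness makes the substitution legitimate, but the argument is the same.
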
 

\begin{proof}
A well-known theorem of P\'olya~\cite{polya_simplex} states that if a form $f(x_1,\ldots,x_n)$ is positive on the simplex (i.e., the set $\Delta_n\mathrel{\mathop:}=\{x\in\mathbb{R}^n| x_i\geq 0, \sum_i x_i=1\}$), then there exists an integer $r$ such that all coefficients of $$(x_1+\cdots+x_n)^rf(x_1,\ldots,x_n)$$
are positive. Under the assumptions of the theorem, this implies that there is an $r$ for which the form $p(\sqrt{x_1},\ldots, \sqrt{x_n})(x_1+\cdots+x_n)^r$ and hence the form $$p(x_1,\ldots,x_n)(x_1^2+\cdots+x_n^2)$$  have positive coefficients. But this means that $p(x)(\sum_i x_i^2)$ is a nonnegative weighted sum of squared monomials and therefore clearly dsos (see (\ref{eq:dsos})). (In a Gram matrix representation $p(x)(\sum_i x_i^2)=z^T(x)Qz(x)$, the argument we just gave shows that we can take $Q$ to be diagonal.)
\end{proof}



We remark that even forms already constitute a very interesting class of polynomials since they include, e.g., all polynomials coming from \emph{copositive programming}; see Subsection~\ref{sec:copositivity}. In fact, any NP-complete problem can be reduced in polynomial time to the problem of checking whether a degree-4 even form is nonnegative~\cite{nonnegativity_NP_hard}. An application of this idea to the independent set problem in combinatorial optimization is presented in Subsection~\ref{sec:copositivity}. 



The next proposition shows that the evenness assumption cannot be removed from Theorem~\ref{thm:even.forms.rdsos}.


\begin{proposition}\label{prop.duh.cedric}
For any $0<a<1$, the quadratic form
\begin{equation} \label{eq:not.rsdsos.poly}
 p(x_1,x_2,x_3)=(x_1+x_2+x_3)^2+a(x_1^2+x_2^2+x_3^2)
\end{equation}
is positive definite but not r-sdsos for any $r$.
\end{proposition}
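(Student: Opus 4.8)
The positive-definiteness half I would dispatch immediately: for $x\neq 0$ we have $(x_1+x_2+x_3)^2\geq 0$ while $a(x_1^2+x_2^2+x_3^2)>0$ since $a>0$, so $p(x)>0$. All the work is in showing that $q_r\mathrel{\mathop:}=p(x)\cdot(x_1^2+x_2^2+x_3^2)^r$ is not sdsos for any $r\geq 0$. My plan is, for each fixed $r$, to exhibit a single linear functional $L_r$ on the space of ternary forms of degree $2r+2$ that is nonnegative on all sdsos forms but strictly negative at $q_r$; this forces $q_r\notin SDSOS_{3,2r+2}$, i.e.\ $p$ is not $r$-sdsos, and since $r$ is arbitrary we are done. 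The reduction that makes this manageable is that, by Definition~\ref{def:sdsos} (together with the remark following it, which lets me take all monomials homogeneous of degree $r+1$ because $q_r$ is a form of degree $2(r+1)$), every sdsos form of degree $2r+2$ in $x_1,x_2,x_3$ is a finite sum of generators of the two types $\alpha\,m^2$ (with $\alpha\geq 0$) and $(\hat\beta\,m_i+\tilde\beta\,m_j)^2$, where $m,m_i,m_j$ are monomials of degree $r+1$; hence I only need $L_r\geq 0$ on these generators.

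To build $L_r$ I would start from the parity observation that every monomial $x_1^ax_2^bx_3^c$ of even total degree in three variables has either zero or exactly two of $a,b,c$ odd (the number of odd exponents has the same parity as $a+b+c$, and it cannot equal $3$). Define $L_r$ on degree-$(2r+2)$ monomials by $L_r(x_1^ax_2^bx_3^c)=+1$ when $a,b,c$ are all even and $L_r(x_1^ax_2^bx_3^c)=-1$ when exactly two are odd, and extend linearly. On a generator $\alpha\,m^2$ the monomial $m^2$ has all even exponents, so $L_r(\alpha m^2)=\alpha\geq 0$. On a generator $(\hat\beta\,m_i+\tilde\beta\,m_j)^2=\hat\beta^2 m_i^2+2\hat\beta\tilde\beta\,m_im_j+\tilde\beta^2 m_j^2$ one gets the value $\hat\beta^2+\tilde\beta^2+2\hat\beta\tilde\beta\,\sigma$ with $\sigma\mathrel{\mathop:}=L_r(m_im_j)\in\{-1,+1\}$; since $|\sigma|\leq 1$ this is at least $(|\hat\beta|-|\tilde\beta|)^2\geq 0$. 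Hence $L_r\geq 0$ on $SDSOS_{3,2r+2}$. (If one prefers not to invoke the homogeneity reduction, the same argument works after extending $L_r$ by zero to all lower-degree monomials: in a non-homogeneous generator any cross term $m_im_j$ of degree $<2r+2$ is then annihilated and the check goes through verbatim.)

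It remains to compute $L_r(q_r)$. Expanding $(x_1+x_2+x_3)^2=\sum_i x_i^2+2\sum_{i<j}x_ix_j$ and absorbing one factor of $\sum_i x_i^2$ gives
\[
q_r=(1+a)\Big(\textstyle\sum_{i} x_i^2\Big)^{r+1}+2\Big(\textstyle\sum_{i<j}x_ix_j\Big)\Big(\textstyle\sum_{k} x_k^2\Big)^{r}.
\]
Every monomial appearing in $(\sum_i x_i^2)^{r+1}$ has all even exponents, so $L_r$ applied to the first summand equals its value at $x_1=x_2=x_3=1$, namely $3^{r+1}$; every monomial appearing in $(\sum_{i<j}x_ix_j)(\sum_k x_k^2)^r$ has exactly two odd exponents, so $L_r$ applied to the second summand equals minus its value at $x_1=x_2=x_3=1$, namely $-(3)(3^{r})=-3^{r+1}$. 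Therefore $L_r(q_r)=(1+a)\,3^{r+1}-2\cdot 3^{r+1}=3^{r+1}(a-1)<0$ because $0<a<1$. Combined with the previous paragraph, $q_r$ is not sdsos, which proves the proposition.

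The main point requiring care, rather than any hard computation, is the homogeneity reduction used in the sdsos decomposition of $q_r$ (supplied by the remark after Definitions~\ref{def:dsos}--\ref{def:sdsos}, or avoided as noted above). The only nonroutine ingredient is the choice of $L_r$; it is the natural lift of the $r=0$ certificate, where $q_0=p$ has (unique) Gram matrix $J+aI$ and $L_0$ corresponds to the matrix $W=2I-J$. One checks that $\langle W,vv^T\rangle\geq 0$ for every $v\in\mathbb{R}^3$ with at most two nonzero entries---equivalently, each $2\times 2$ principal submatrix of $W$ is the positive semidefinite matrix $\left(\begin{smallmatrix}1&-1\\-1&1\end{smallmatrix}\right)$---so $\langle W,X\rangle\geq 0$ for all $X\in SDD_3$ by Lemma~\ref{lem:sdd=sum.2x2}, yet $J+aI\succeq 0$ while $\langle W,J+aI\rangle=3(a-1)<0$; this is exactly the gap between $SDD_3$ and $P_3$ that we are exploiting.
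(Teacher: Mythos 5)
Your proof is correct and is essentially the paper's own argument: the separating functional $L_r$ (value $+1$ on even monomials, $-1$ on non-even ones) is exactly the vector $v_{f_r}$ the paper constructs, and your direct check on the generators $\alpha m^2$ and $(\hat\beta m_i+\tilde\beta m_j)^2$ is the same computation the paper packages as ``flip the signs of the non-even coefficients and evaluate the resulting sdsos form at the all-ones vector.'' The only addition is that you carry out explicitly the final evaluation $L_r(q_r)=3^{r+1}(a-1)<0$, which the paper merely asserts.
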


\begin{proof}
\aaa{Positive definiteness is immediate from having $a>0$. We show that for $a<1,$ the quadratic form $p$ is not r-sdsos for any $r$ by first presenting a general separating hyperplane for the cone of sdsos forms in any degree and dimension. For a form $f$, construct a vector $v_f$ which has an entry per monomial in $f$, with entry $i$ equal to $+1$ if the $i$-th monomial in $f$ is even and $-1$ if it is not. We claim that if $f$ is sdsos, then the (standard) inner product of its coefficients with $v_f$ is nonnegative. This can be seen by noting that this inner product is equal to the value that a related sdsos form $q$ takes at the all ones vector. The form $q$ is obtained from $f$ by leaving its even monomials untouched and flipping the sign of the coefficients of its non-even monomials. It is straightforward to see (from the expansion in the definition of an sdsos polynomial in (\ref{eq:sdsos})) that $q$ constructed as such will be sdsos as well. Hence, $q(1,\ldots,1)\geq 0$.}

\aaa{The latter claim of the proposition now follows from the observation that for any $r$, the inner product of the coefficients of the form $f_r(x)\mathrel{\mathop:}=p(x)\cdot(x_1^2+x_2^2+x_3^2)^r$ with the $\pm 1$ vector $v_{f_r}$  that we described above is negative when $a<1$ (and exactly when $a<1$).}
%
\end{proof}

We remark that the form in (\ref{eq:not.rsdsos.poly}) can be proved to be positive \aaa{using} the improvements presented in Section~\ref{sec:improvements}, in particular with a ``factor width 3'' proof system from Subsection~\ref{subsec:factor.width}. {\color{black} Alternatively, one can work with the next theorem, which removes the assumption of evenness from Theorem~\ref{thm:even.forms.rdsos} at the cost of doubling the number of variables and the degree.


\begin{theorem} [see Section 4 of~\cite{pop_hierarchy}]\label{thm:p.dsos.Polya}
	An $n$-variate form $p\mathop{\mathrel:}=p(x)$ of degree $2d$ is positive definite if and only if there exists a positive integer $r$ that makes the following form 	$r$-dsos:
	$$p(v^2-w^2)-\frac{1}{\sqrt{r}}(\sum_{i=1}^n (v_i^2-w_i^2)^2)^d+\frac{1}{2\sqrt{r}}(\sum_{i=1}^n(v_i^4+w_i^4))^d.$$
\end{theorem}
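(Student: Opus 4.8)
The plan is to use the evenness of the form in question to pass to a positivity statement on the simplex, and then to invoke a \emph{quantitative} version of P\'olya's theorem (in the spirit of Powers and Reznick) in order to control how large the P\'olya power must be as a function of $r$. Write $f_r=f_r(v,w)$ for the degree-$4d$ form in the $2n$ variables $(v,w)$ appearing in the statement. Since each $v_i,w_i$ enters $f_r$ only through $v_i^2$ and $w_i^2$, the form $f_r$ is even, so $f_r(v,w)=F_r(v_1^2,\dots,v_n^2,w_1^2,\dots,w_n^2)$, where
\begin{equation*}
F_r(u,t)\;=\;p(u-t)\;-\;\tfrac{1}{\sqrt r}\Big(\textstyle\sum_{i}(u_i-t_i)^2\Big)^{d}\;+\;\tfrac{1}{2\sqrt r}\Big(\textstyle\sum_{i}(u_i^2+t_i^2)\Big)^{d}
\end{equation*}
is a form of degree $2d$ in $2n$ variables. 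Two elementary reductions drive everything. First, because $(v,w)\mapsto(v^2,w^2)$ maps $\mathbb{R}^{2n}$ \emph{onto} the nonnegative orthant (and sends $0$ to $0$), the form $f_r$ is positive definite (resp.\ nonnegative) if and only if $F_r$ is positive (resp.\ nonnegative) on the nonnegative orthant, equivalently, by homogeneity of $F_r$, on the simplex $\Delta_{2n}$. Second, if the product $\big(\sum_i u_i+\sum_i t_i\big)^{N}F_r$ has all coefficients nonnegative, then substituting $u_i=v_i^2,\ t_i=w_i^2$ shows that $f_r\cdot\big(\sum_i v_i^2+\sum_i w_i^2\big)^{N}$ is a nonnegatively weighted sum of squared monomials, hence dsos (cf.\ the first sum in~\eqref{eq:dsos}); that is, $f_r$ is $N$-dsos. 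I will also use the monotonicity fact that, for a \emph{fixed} form, being $N$-dsos implies being $r$-dsos for every $r\ge N$: multiplying a dsos decomposition by a squared monomial $x^{2\gamma}=(x^{\gamma})^2$ (replacing each inner term $m$ by $x^\gamma m$) again yields a dsos decomposition, so $(\sum_i x_i^2)^{\,r-N}$ times a dsos form is dsos.

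The ``if'' direction is immediate. Suppose $f_r$ is $r$-dsos for some $r$; then $f_r\cdot\big(\sum_i v_i^2+\sum_i w_i^2\big)^{r}$ is dsos, hence nonnegative, so $f_r\ge 0$ on $\mathbb{R}^{2n}$ (divide by the positive multiplier off the origin, and note $f_r(0)=0$). Given $x\neq 0$, choose $v,w$ with $v_iw_i=0$ and $v_i^2-w_i^2=x_i$ for each $i$ (take $v_i=\sqrt{x_i},\,w_i=0$ if $x_i\ge 0$, and $v_i=0,\,w_i=\sqrt{-x_i}$ otherwise). For this choice $\sum_i(v_i^2-w_i^2)^2=\sum_i(v_i^4+w_i^4)=\|x\|^2$, so evaluating the form gives $f_r(v,w)=p(x)-\tfrac{1}{2\sqrt r}\|x\|^{2d}$, and $f_r\ge 0$ forces $p(x)\ge \tfrac{1}{2\sqrt r}\|x\|^{2d}>0$. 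Hence $p$ is positive definite.

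For the ``only if'' direction, assume $p$ is positive definite. By the reductions above it suffices to find a positive integer $r$ for which (i) $F_r>0$ on $\Delta_{2n}$ and (ii) the P\'olya power needed for $\big(\sum_i u_i+\sum_i t_i\big)^{N}F_r$ to have nonnegative coefficients is at most $r$; then $f_r$ is $N$-dsos for some $N\le r$, hence $r$-dsos by monotonicity. Write $F_r=p(u-t)+\tfrac{1}{\sqrt r}G$ with $G(u,t)=\tfrac12\big(\sum_i(u_i^2+t_i^2)\big)^{d}-\big(\sum_i(u_i-t_i)^2\big)^{d}$. On $\Delta_{2n}$ we have $p(u-t)\ge0$, with zero set exactly $Z=\{u=t\}$ because $p$ is positive definite; on $Z$, where $\sum_i u_i=\tfrac12$, one computes $G(u,u)=2^{d-1}\big(\sum_i u_i^2\big)^{d}\ge 2^{d-1}(4n)^{-d}>0$ by the power-mean inequality. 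By continuity, $G$ is bounded below by a positive constant on some neighborhood $U\supseteq Z$ in $\Delta_{2n}$, while $p(u-t)$ is bounded below by a positive constant on the compact set $\Delta_{2n}\setminus U$; combining the two regions (using $\tfrac{1}{\sqrt r}\to 0$ to control $\tfrac{1}{\sqrt r}|G|$ on the latter) yields a uniform lower bound $\min_{\Delta_{2n}}F_r\ge c/\sqrt r$ for all large $r$, with $c=c(p,n,d)>0$. Meanwhile, since $\tfrac{1}{\sqrt r}\le 1$, the coefficients of $F_r$ are bounded by a constant $C=C(p,n,d)$ independent of $r$. Feeding these two estimates into the quantitative P\'olya theorem, the required power $N$ can be taken of order $\dfrac{(\deg F_r)^2\,C}{\min_{\Delta_{2n}}F_r}=O(\sqrt r)$, so $N\le r$ once $r$ is large enough; this proves the claim.

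The main obstacle is precisely this quantitative balancing of the two roles played by $r$ --- the exponent of the multiplier $(\sum_i x_i^2)^r$ and the $\tfrac{1}{\sqrt r}$ weight of the correction terms. A qualitative appeal to Theorem~\ref{thm:even.forms.rdsos} would only say that, once $r$ is large enough to make $f_r$ positive definite, $f_r$ is $N$-dsos for \emph{some} unspecified $N=N(r)$, which is useless here; one genuinely needs the quantitative P\'olya bound, and then one needs $N(r)\le r$ eventually. The delicate input is the lower bound $\min_{\Delta_{2n}}F_r\ge \Omega(1/\sqrt r)$, and here the precise shape of the correction is what saves the day: on the nonnegative orthant $(u_i-t_i)^2\le u_i^2+t_i^2$, so the negative correction is at most $\tfrac{1}{\sqrt r}(\sum_i(u_i^2+t_i^2))^{d}$ in magnitude; on the zero locus $\{u=t\}$ of $p(u-t)$ this negative correction vanishes outright while $\tfrac{1}{2\sqrt r}(\sum_i(u_i^2+t_i^2))^{d}$ is bounded below by a positive constant times $\tfrac{1}{\sqrt r}$; and off that locus $p(u-t)$ itself dominates the $O(1/\sqrt r)$ correction. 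The rate $\tfrac{1}{\sqrt r}$ (rather than, say, $\tfrac1r$) and the particular pairing of the two correction terms are tuned exactly so that the resulting P\'olya power stays $O(\sqrt r)$ and hence eventually drops below $r$; carrying out these estimates carefully is the technical heart, while the remainder is the bookkeeping sketched above.
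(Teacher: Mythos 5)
Your proof is correct. Note that the paper itself supplies no proof of this theorem---it is quoted from Section~4 of the cited reference---and your argument (passing to the even companion form $F_r$ in the variables $(u,t)=(v^2,w^2)$, establishing the $\Omega(1/\sqrt r)$ lower bound on $\Delta_{2n}$ by splitting off a neighborhood of $\{u=t\}$ where the correction term $G$ is bounded below, and then invoking the Powers--Reznick quantitative P\'olya bound to get a P\'olya exponent $O(\sqrt r)\le r$) is essentially the argument of that reference, including the final monotonicity step ($N$-dsos implies $r$-dsos for $r\ge N$), which you correctly identify as necessary and justify.
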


In~\cite[Section 4]{pop_hierarchy}, this theorem is used to construct LP and SOCP-based converging hierarchies of lower bounds\footnote{As implied by Proposition~\ref{prop.duh.cedric}, not every convergent SOS hierarchy has an analogous convergent (S)DSOS hierarchy; see~\cite[Section 2.2]{ahmadi2017response} for more detail.}for the general polynomial optimization problem (\ref{eq:POP}) when the feasible set is compact. 
}

\section{Numerical examples and applications} \label{sec:experiments}

In this section, we consider several numerical examples that highlight the scalability of the approach presented in this paper. Comparisons to examples considered in the literature are provided whenever possible. A software package written using the Systems Polynomial Optimization Toolbox (SPOT) \cite{Megretski_spot} includes a complete implementation of the presented methods and is available online\footnote{Link to \texttt{spotless\_isos} software package: 


\href{https://github.com/anirudhamajumdar/spotless/tree/spotless_isos}{https://github.com/anirudhamajumdar/spotless/tree/spotless\_isos}}. The toolbox features efficient polynomial algebra and allows us to setup the large-scale LPs and SOCPs arising from our examples. Our Supplementary Material provides a brief introduction to the software package and the associated functionality required for setting up the DSOS/SDSOS programs considered in this paper. \ani{The LPs and SOCPs resulting from our programs, as well as the SDPs that are considered for comparison, are solved using the solver MOSEK~\cite{mosek} on a machine with 4 processors and a clock speed of 3.4 GHz and 16GB of RAM. While using special-purpose solvers could lead to speedups for any of the three methods (LP/SOCP/SDP) for specific problems, we believe that a widely-employed general-purpose solver such as MOSEK provides a strong benchmark. Further, for our example on options pricing (Section \ref{subsec:experiments.options.pricing}), we also present results using the SDPNAL+ solver \cite{yang2015sdpnal}. As brought to our attention by a referee, the constraints in this example are particularly amenable to the semismooth Newton method employed by SDPNAL+.}  In our experiments, we have also tried the solver SeDuMi~\cite{Sturm99}, which is arguably the most commonly-used free SDP solver. We do not report these run times however (except in Section~\ref{subsub:ROA}) as they are often significantly slower than those produced by the SDP solver of MOSEK.








\subsection{Lower bounds on polynomial optimization problems} \label{subsec:experiments.pop}

An important application of sum of squares optimization is to obtain lower bounds on polynomial optimization problems (see e.g.~\cite{lasserre_moment},~\cite{Minimize_poly_Pablo},~\cite{nie2006minimizing}). In this subsection, we consider the particular problem of minimizing a homogeneous polynomial $p(x)$ of degree $2d$ on the unit sphere (i.e., the set $\{x\in\mathbb{R}^n| \ x^Tx=1\}$). This well-studied problem is strongly NP-hard even when $d=2$. Its optimal value is easily seen to be equivalent to the optimal value of the following problem:
\begin{flalign} 
\label{eq:min_hom_sphere}
			\underset{\gamma}{\text{maximize}} \hspace*{1cm} & \gamma  \\
			\text{s.t.} \hspace*{1cm} & p(x) - \gamma (x^T x)^d \geq 0, \forall x\in\mathbb{R}^n. \nonumber
\end{flalign}

By replacing the nonnegativity condition with a SOS/DSOS/SDSOS constraint, the resulting problem becomes an SDP/LP/SOCP. Tables \ref{tab:min_hom_opts} and \ref{tab:min_hom_runtimes} compare optimal values and running times respectively on problems of increasing size. We restrict ourselves to quartic forms and vary the number of variables $n$ between $5$ and $70$. Table \ref{tab:min_hom_opts} compares the optimal values for the DSOS, SDSOS, 1-DSOS, 1-SDSOS and SOS relaxations of problem \eqref{eq:min_hom_sphere} on quartic forms whose coefficients are fully dense and drawn independently from the standard normal distribution (we consider a single random instance for each $n$). We note that the optimal values presented here are representative of numerical experiments we performed for a broader set of random instances (including coefficients drawn from a uniform distribution instead of the normal distribution). Also, in all our experiments, we could verify that the SOS bound actually coincides with the true global optimal value of the problem. This was done by finding a point on the sphere that produced a matching upper bound.

As the tables illustrate, both DSOS and SDSOS scale up to problems of dimension $n = 70$. This is well beyond the size of problems handled by SOS programming, which is unable to deal with problems with $n$ larger than $25$ due to memory (RAM) constraints. While the price we pay for this increased scalability is suboptimality, this is tolerable in many applications (as we show in other examples in this section).

Table \ref{tab:min_hom_opts} also compares the approach presented in this paper to the lower bound obtained at the root node of the branch-and-bound procedure in the popular global optimization package BARON \cite{Sahinidis14}.\footnote{We thank Aida Khajavirad for running the BARON experiments for us.} This comparison was only done up to dimension 30 since beyond this we ran into difficulties passing the large polynomials to BARON's user interface.

\begin{table}
\footnotesize
\tabcolsep=0.12cm
\begin{center}
  \begin{tabular}{ | c | c | c | c | c | c | c | c | c | c | c |}
    \hline 
     & n = 10 & n = 15 & n = 20 & n = 25 & n = 30 & n = 40 & n = 50 & n = 60 & n = 70 \\ \hline
    DSOS & -5.31 & -10.96 & -18.012 &  -26.45 &  -36.85 &  -62.30 &  -94.26 & -133.02 & -178.23  \\ \hline
    SDSOS &  -5.05 &  -10.43 &  -17.33 &  -25.79 & -36.04 &  -61.25 &  -93.22 & -131.64 & -176.89  \\ \hline
    1-DSOS & -4.96  &  -9.22 &  -15.72 &  -23.58 & NA & NA & NA & NA & NA  \\ \hline
    1-SDSOS & -4.21  & -8.97 & -15.29 & -23.14 & NA & NA & NA & NA & NA  \\ \hline
    SOS & -1.92 & -3.26 & -3.58 & -3.71 & NA & NA & NA & NA & NA  \\ \hline
    BARON & -175.41 & -1079.89 & -5287.88 & - & -28546.1 & - & - & - & -  \\ 
    \hline
  \end{tabular}
    \caption{Comparison of lower bounds on the minimum of a quartic form on the sphere for varying number of variables.}
  \label{tab:min_hom_opts}
  \end{center}
\end{table}


Table \ref{tab:min_hom_runtimes} presents running times\footnote{The BARON instances were implemented on a 64-bit Intel Xeon X5650 2.66Ghz processor using the CPLEX LP solver.} averaged over $10$ random instances for each $n$. There is a significant difference in running times between DSOS/SDSOS and SOS beginning at $n = 15$. While we are unable to run SOS programs beyond $n = 25$, DSOS and SDSOS programs for $n = 70$ take only a few minutes to execute. Memory constraints prevent us from executing programs for $n$ larger than $70$. The results indicate, however, that it may be possible to run larger programs within a reasonable amount of time on a machine equipped with more RAM.

\begin{table}
\footnotesize
\tabcolsep=0.12cm
\begin{center}
  \begin{tabular}{ | c | c | c | c | c | c | c | c | c | c | c |}
    \hline 
     & n = 10 & n = 15 & n = 20 & n = 25 & n = 30 & n = 40 & n = 50 & n = 60 & n = 70 \\ \hline
    DSOS & 0.30 & 0.38 & 0.74 & 15.51 & 7.88 & 10.68 & 25.99 & 58.10 & 342.76  \\ \hline
    SDSOS &  0.27 & 0.53 & 1.06 & 8.72 & 5.65 &  18.66 & 47.90 & 109.54 & 295.30  \\ \hline
    1-DSOS & 0.92 & 6.26 & 37.98 & 369.08 & $\infty$ & $\infty$ & $\infty$ & $\infty$ & $\infty$  \\ \hline
    1-SDSOS &  1.53 & 14.39 & 82.30 & 538.5389 & $\infty$ & $\infty$ & $\infty$ & $\infty$ & $\infty$  \\ \hline
    SOS & 0.24 & 5.60 & 82.22 & 1068.66 & $\infty$ & $\infty$ & $\infty$ & $\infty$ & $\infty$  \\ \hline
    BARON & 0.35 & 0.62 & 3.69 & - & -  & - & - & - & -  \\ 
    \hline
  \end{tabular}
    \caption{Comparison of running times (in seconds) averaged over $10$ instances for lower bounding a quartic form on the sphere for varying number of variables.}
\label{tab:min_hom_runtimes}
  \end{center}
\end{table}

\subsection{Copositive programming and combinatorial optimization}
\label{sec:copositivity}

A symmetric matrix $Q \in \mathbb{R}^{n \times n}$ is copositive if $x^T Q x\geq 0$ for all $x\geq 0$ (i.e., for all $x$ in the nonnegative orthant). The problem of optimizing a linear function over affine sections of the cone $\mathcal{C}_n$ of $n \times n$ copositive matrices has recently received a lot of attention from the optimization community~\cite{dur_copositive}, since it can exactly model several problems of combinatorial and nonconvex optimization~\cite{Bomze02},~\cite{dur_copositive},~\cite{burer_copositive}. For instance, the size $\alpha(G)$ of the largest independent set \footnote{An independent set of a graph is a subset of its nodes no two of which are connected. The problem of finding upper bounds on $\alpha$ has many applications in scheduling and coding theory~\cite{lovasz_shannon}.} of a graph $G$ on $n$ nodes is equal to the optimal value of the following copositive program~\cite{deKlerk_StableSet_copositive}:
\begin{flalign} \label{eq:copositive.stable.set}
\underset{\gamma}{\text{minimize}} \hspace*{1cm} & \gamma \nonumber \\
\text{s.t.} \hspace*{1cm} &  \gamma(A+I) -  J \in \mathcal{C}_n, 
\end{flalign}
where $A$ is the adjacency matrix of the graph, $I$ is the identity matrix, and $J$ is the matrix of all ones.


 It is easy to see that a matrix $Q$ is copositive if and only if the quartic form $(x.^2)^TQ(x.^2)$, with $x.^2\mathrel{\mathop:}= (x_1^2, x_2^2, \dots, x_n^2)^T$, is nonnegative. Hence, by requiring this form to be r-sos/r-sdsos/r-dsos, one obtains inner approximations to the cone $\mathcal{C}_n$ that can be optimized over using semidefinite, second order cone, and linear programming. The sos version of this idea goes back to the PhD thesis of Parrilo~\cite{PhD:Parrilo}. 
 
In this section, we compare the quality of these bounds using another hierarchical LP-based method in the literature \cite{Bomze02},~\cite{deKlerk_StableSet_copositive}. The $r$-th level of this LP hierarchy (referred to as the P\'olya LP from here onwards) requires that 
 the coefficients of the form $(x.^2)^TQ(x.^2)\cdot(x_1^2+\cdots+x_n^2)^r$ be nonnegative. This is motivated by a theorem by P\'olya, which states that this constraint will be satisfied for $r$ large enough (assuming $x^TQx>0$ for all nonzero $x\geq 0$). It is not difficult to see that the P\'olya LP is always dominated by the LP which requires $(x.^2)^TQ(x.^2)$ to be r-dsos. This is because the latter requires the underlying Gram matrix to be diagonally dominant, while the former requires it to be diagonal, with nonnegative diagonal entries. 
 
In~\cite{deKlerk_StableSet_copositive}, de Klerk and Pasechnik show that when the P\'olya LP is applied to approximate the copositive program in (\ref{eq:copositive.stable.set}), then the precise independent set number of the graph is obtained within $\alpha(G)^2$ steps of the hierarchy. By the previous argument, the same claim immediately holds for the r-dsos (and hence r-sdsos) hierarchies. 
 
Table~\ref{tab:indep.set.icos} revisits Example 5.2 of \cite{Bomze02}, where upper bounds on the independent set number of the complement of the graph of an icosahedron are computed. (This is a graph on 12 nodes with independent set number equal to 3; see~\cite{Bomze02} for its adjacency matrix.)  As the results illustrate, SOS programming provides an exact upper bound (since the size of the largest stable set is necessarily an integer). The second levels of the r-dsos and r-sdsos hierarchies also provide an exact upper bound. In contrast, the LP based on Polya's theorem in \cite{Bomze02} gives the upper bound $\infty$ for the $0$-th and the $1$-st level, and an upper bound of $6$ at the second level. In contrast to the P\'olya LP, one can show that the $0$-th level of the r-dsos LP always produces a finite upper bound on the independent set number. In fact, this bound will always be smaller than $n-\min d_i+1$, where $d_i$ is the degree of node $i$~\cite[Theorem 5.1]{col_gen_Ahmadi_Dash_Hall} .

\begin{table}\label{tab:indep.set.icos}
\small
\begin{center}
  \begin{tabular}{ | c | c | c | c | c |  }
    \hline 
     & Polya LP & r-DSOS & r-SDSOS & SOS \\ \hline
    $r = 0$ & $\infty$ & 6.000 & 6.000 & 3.2362  \\ \hline
    $r = 1$ & $\infty$ & 4.333 & 4.333 & NA \\ \hline
    $r = 2$ & 6.000 & 3.8049 & 3.6964 & NA \\
    \hline
  \end{tabular}
    \caption{Comparison of upper bounds on the size of the largest independent set in the complement of the graph of an icosahedron.}
\label{tab:stable_set_results}
  \end{center}
  \end{table}

\subsection{Convex regression in statistics} \label{subsec:experiments.convex.regression}

Next, we consider the problem of fitting a function to data subject to a constraint on the function's convexity. This is an important problem in statistics and has a wide domain of applications including value function approximation in reinforcement learning, options pricing and modeling of circuits for geometric programming based circuit design \cite{hannah_convex_regression,Hannah12}. Formally, we are given $N$ pairs of data points $(x_i,y_i)$ with $x_i \in \mathbb{R}^n$, $y_i \in \mathbb{R}$ and our task is to find a \emph{convex} function $f$ from a family $\mathcal{F}$ that minimizes an appropriate notion of fitting error (e.g., $L_1$ error):
\begin{flalign} 
					\underset{f \in \mathcal{F}}{\text{minimize}} \hspace*{1cm} & \sum_{i=1}^N |f(x_i) - y_i| \nonumber \\
			\text{s.t.} \hspace*{1cm} & f \ \text{is convex}. \nonumber
\end{flalign}
The convexity constraint here can be imposed by requiring that the function $w^T H(x)w$ in $2n$ variables $(x,w)$ associated with the Hessian matrix $H(x)$ of $f(x)$ be nonnegative. Restricting ourselves to polynomial functions $f$ of bounded degree, we obtain the following optimization problem:
\begin{flalign} 
			\underset{f \in \mathbb{R}^d[x]}{\text{minimize}} \hspace*{1cm} & \sum_{i=1}^N |f(x_i) - y_i| \nonumber \\
			\text{s.t.} \hspace*{1cm} & w^T H(x)w \geq 0, \forall x,w,\nonumber
\end{flalign}
where $H(x)$ is again the Hessian of $f(x)$. As before, we can replace the nonnegativity constraint with a dsos/sdsos/sos constraint. For our numerical experiment, we generated $300$ random vectors $x_i$ in $\mathbb{R}^{20}$ drawn i.i.d. from the standard normal distribution. The function values $y_i$ were computed as follows:
\begin{equation}
\ani{y_i = \text{exp}(\|x_i\|_2) + \eta_i}, \nonumber
\end{equation}
where \ani{$\eta_i$} was chosen i.i.d. from the standard normal distribution. Tables \ref{tab:convex_reg_errors} and \ref{tab:convex_reg_runtimes} present the fitting errors and running times for the DSOS/SDSOS/SOS programs resulting from restricting the class of functions $\mathcal{F}$ to polynomials of degree $d = 2$ and $d = 4$. As the results illustrate, we are able to obtain significantly smaller errors with polynomials of degree $4$ using DSOS and SDSOS (compared to SOS with $d=2$), while the SOS program for $d=4$ does not run due to memory constraints. 

\begin{table}
\small
\begin{minipage}{0.5\textwidth}
\begin{center}
  \begin{tabular}{ | c | c | c | c | c |  }
    \hline 
     & DSOS & SDSOS & SOS \\ \hline
    $d = 2$ & 35.11 & 33.92 & 21.28   \\ \hline
    $d = 4$ & 14.86 & 12.94 & NA   \\ 
    \hline
  \end{tabular}
    \caption{Comparison of fitting errors for convex regression.}
\label{tab:convex_reg_errors}

  \end{center}
  \end{minipage}
\begin{minipage}{0.49\textwidth}
\begin{center}
  \begin{tabular}{ | c | c | c | c | c |  }
    \hline 
     & DSOS & SDSOS & SOS \\ \hline
    $d = 2$ & 0.49 & 0.66 & 0.53 \\ \hline
    $d = 4$ & 145.85 & 240.18 & $\infty$   \\ 
    \hline
  \end{tabular}
    \caption{Comparison of run times (in s) for convex regression.}
\label{tab:convex_reg_runtimes}
  \end{center}
\end{minipage}
  \end{table}

%

\subsection{Options pricing}\label{subsec:experiments.options.pricing}
An important problem in financial economics is that of determining the price of a derivative security such as an option given the price of the underlying asset. Significant progress was made in tackling this problem with the advent of the Black-Scholes formula, which operates under two assumptions: (i) the underlying stock price is governed by geometric Brownian motion, and (ii) there is no arbitrage. A natural question that has been considered in financial mathematics is to see if one can provide bounds on the price of an option in the no-arbitrage setting given much more minimal assumptions on the dynamics of the stock price. One approach to this question---studied, e.g., in~\cite{Bertsimas02},~\cite{Boyle_options}, ---is to assume that we are only given the first $k$ moments of the stock price and want to optimally bound the price of the option.

More precisely, we consider that we are given $m$ stocks, an associated option with payoff function $\phi: \mathbb{R}_+^m \rightarrow \mathbb{R}$ (which will typically depend on the strike price of the option), a vector of $n$ moment functions, $f_i: \mathbb{R}_m^+ \rightarrow \mathbb{R}$, $i = 0,1,\dots,n$ ($f_0$ is assumed to be 1), and the corresponding vector of moments $q = (q_0,\dots,q_n)$ (here $q_0 = 1$). The problem of upper bounding the price of the option can then be formulated as follows \cite{Bertsimas02}:
\begin{flalign} 
\label{eq:options_pricing_orig}
			\underset{\pi}{\text{maximize}} \hspace*{1cm} & E_\pi[\phi(X)]  \\
			\text{s.t.} \hspace*{1cm} & E_\pi[f_i(X)] = q_i, \ i = 0,\dots,n, \nonumber \\
			& \pi(x) \geq 0, \forall x \in \mathbb{R}_+^m. \nonumber
\end{flalign}
Here, the expectation is taken over all Martingale measures $\pi$, defined on $\mathbb{R}_+^m$. We consider here the case where one is given the mean $\mu$ and covariance matrix $\sigma$ of the stock prices. Problem \eqref{eq:options_pricing_orig} can then be cast as the following problem (see \cite[Section 6.2]{Bertsimas02})
\begin{flalign}
 \label{eq:options_pricing_copos}
			\underset{y,Y}{\text{minimize}} \hspace*{1cm} & y_0 + \sum_{i=1}^n y_i \mu_i + \sum_{i=1}^n \sum_{j=1}^n y_{ij}(\sigma_{ij} + \mu_i\mu_j)  \\
			\text{s.t.} \hspace*{1cm} & x^T Y x + y^T x + y_0 \geq \phi(x), \ \forall x \geq 0, \nonumber
\end{flalign}
which for many common functions $\phi$ of interest (see e.g. below) gives rise to a copositive program. A well-known and obvious sufficient condition for a matrix $M$ to be copositive is for it to have a decomposition $M = P + N$, where $P$ is psd and $N$ is element-wise nonnegative \cite{PhD:Parrilo}. This allows one to apply SDP to problem (\ref{eq:options_pricing_copos}) and obtain upper bounds on the option price. By replacing the psd condition on the matrix $P$ with a dd/sdd condition, we obtain a DDP/SDDP.

We first compare the DDP/SDDP/SDP approaches on an example from \cite[Section 4]{Boyle_options}, which considers the problem of upper bounding the price of a European call option with $m = 3$ underlying assets. The vector of means of the assets is $\mu = [44.21, 44.21, 44.21]^T$ and the covariance matrix is:
\begin{equation}
\tabcolsep=1pt
\setcounter{MaxMatrixCols}{12}
\sigma = \begin{bmatrix} 
  184.04 & 164.88 & 164.88  \\
  164.88 & 184.04 & 164.88  \\
  164.88 & 164.88 & 184.04
   \end{bmatrix}.
\end{equation}

Further, we have the following payoff function, which depends on the strike price $K$:
\begin{equation}
\phi(x) = \textrm{max}(x_1 - K, x_2 - K, x_3 - K, 0).
\end{equation}

\noindent Table \ref{tab:options_pricing_opts} compares the upper bounds for different strike prices using DDP/SDDP/SDP. \ani{For the SDP experiments, we present results using both MOSEK and SDPNAL+ \cite{yang2015sdpnal}. }
For each strike price, the upper bound obtained using SDP \aaa{(solved with MOSEK)} is exact. This can be verified by finding a distribution that achieves the upper bound (i.e., finding a matching lower bound). For example, when $K = 30$, the distribution supported on the set of four points $(5.971, 5.971, 5.971)$, $(54.03, 46.02, 46.02, 46.02)$, $(46.02, 54.03, 46.02)$ and $(46.02, 46.02, 54.03)$ in $\mathbb{R}^3$ with probability masses $0.105$, $0.298$, $0.298$, $0.298$ respectively does the job. \ani{As the table illustrates, the upper bounds obtained using SDPNAL+ are slightly numerically inaccurate for the default accuracy tolerance of $10^{-6}$, but are accurate with a tolerance of $10^{-9}$ (this was the largest tolerance that resulted in bounds accurate to two decimal places).}
The upper bound obtained using SDDP is almost identical to the exact bound for each strike price. The DDP bound is loose, and interestingly does not change with the strike price. Running times for the different methods on this small example are negligible and hence not presented.


\begin{table}
\small
\begin{center}
  \begin{tabular}{ | c | c | c | c | c | c | c | c | }
    \hline 
             & Exact & SDP  & \ani{SDP} & \ani{SDP} & SDDP & DDP \\
             & & MOSEK & \ani{SDPNAL+ (tol=$10^{-6}$)} & \ani{SDPNAL+ (tol=$10^{-9}$)}  & & \\ \hline
    $K = 30$ & 21.51 & 21.51 & \ani{21.72} & \ani{21.51} & 21.51 & 132.63 \\ \hline
    $K = 35$ & 17.17 & 17.17 & \ani{18.07} & \ani{17.17} &  17.17 & 132.63 \\ \hline
    $K = 40$ & 13.20  & 13.20 & \ani{12.37} & \ani{13.20} & 13.20 & 132.63 \\ \hline
    $K = 45$ & 9.84  & 9.84 & \ani{9.66} & \ani{9.84} & 9.85 & 132.63 \\ \hline
    $K = 50$ & 7.30  & 7.30 & \ani{7.90} & \ani{7.30} & 7.30 & 132.63 \\ \hline
  \end{tabular}
    \caption{Comparison of upper bounds on options prices for different strike prices obtained using SDP, SDDP and DDP. Here, the number of underlying assets is $m = 3$. \ani{For the SDP experiments, we present results using both MOSEK and SDPNAL+ (with two different tolerance levels).}}
\label{tab:options_pricing_opts}
  \end{center}
  \end{table}
  
In order to demonstrate the scalability of DDP and SDDP, we consider a larger scale randomized example with $m = 50$ underlying assets. The mean and covariance are taken to be the sample mean and covariance of $100$ instances of vectors belonging to $\mathbb{R}^m$ with elements drawn uniformly and independently from the interval $[0,10]$. The strike price is $K = 5$ and the payoff function is again
\begin{equation}
\phi(x) = \textrm{max}(x_1 - K, x_2 - K, \dots, x_m - K, 0).
\end{equation}

Table \ref{tab:options_pricing_large} compares upper bounds and running times for the different methods. SDDP provides us with a bound that is very close to the one obtained using SDP, with a significant speedup in computation \aaa{(approximately a factor of 100 compared to MOSEK and a factor of 90 compared to SDPNAL+)}. The running time for DDP is comparable to SDDP, but the bound is not as tight. \ani{SDPNAL+ (tolerance = $10^{-6}$) is faster as compared to MOSEK, but yields a slightly larger bound. In contrast to the smaller-scale example above, the bound does not change when the SDPNAL+ tolerance is set to $10^{-9}$.}

\begin{table} 
\small
\begin{center}
  \begin{tabular}{ | c | c | c | c | c | c |}
    \hline 
              & SDP & \ani{SDP} & \ani{SDP} & SDDP & DDP \\ 
              & MOSEK & \ani{SDPNAL+ (tol=$10^{-6}$)} & \ani{SDPNAL+ (tol=$10^{-9}$)}  & & \\ \hline
    Upper bound & 18.76 & \ani{18.88} & \ani{18.88} & 19.42 & 252.24  \\ \hline
    Running time & 2502.6 s & \ani{2117.3 s} & \ani{6221.7 s} & 24.36 s & 11.85 s  \\ \hline
  \end{tabular}
    \caption{Comparison of upper bounds and running times for a large ($m = 50$) options pricing example using SDP, SDDP and DDP. \ani{For the SDP experiments, we present results using both MOSEK and SDPNAL+ (with two different tolerance levels).}}
\label{tab:options_pricing_large}
  \end{center}
  \end{table}

\subsection{Sparse PCA}\label{subsec:experiments.sparse.pca}

Next, we consider the problem of \emph{sparse principal component analysis} (sparse PCA). In contrast to standard PCA, where the principal components (PCs) in general depend on \emph{all} the observed variables, the goal of sparse PCA is to identify principal components that only depend on small subsets of the variables \cite{Zou06}. While the statistical fidelity of the resulting representation of the data in terms of sparse PCs will in general be lower than the standard PCs, sparse PCs can significantly enhance interpretability of the results. This feature has proved to be very useful in applications such as finance and analysis of gene expressions; see, e.g., \cite{dAspremont07} and references therein. 

Given a $n \times n$ covariance matrix $A$, the problem of finding sparse principal components can be written as the following optimization problem:
\begin{flalign} 
\label{eq:sparse_pca_orig}
				\underset{x}{\text{maximize}} \hspace*{1cm} & x^T A x  \\
			\text{s.t.} \hspace*{1cm} & \|x\|_2 = 1, \nonumber \\
			& \textrm{Card}(x) \leq k. \nonumber
\end{flalign}
Here, $\textrm{Card}(x)$ is the cardinality (number of non-zero entries) of $x$ and $k$ is a given threshold. The problem can be reformulated as the following rank-constrained matrix optimization problem~\cite{dAspremont07}:
\begin{flalign} 
\label{eq:sparse_pca_rank}
				\underset{X}{\text{maximize}} \hspace*{1cm} & \textrm{Tr}(AX)  \\
			\text{s.t.} \hspace*{1cm} & \textrm{Tr}(X) = 1, \nonumber \\
			& \textrm{Card}(X) \leq k^2, \nonumber \\
                        & X \succeq 0, \ \textrm{Rank}(X) = 1. \nonumber
\end{flalign}
In \cite{dAspremont07}, the authors propose ``DSPCA'', an SDP relaxation of this problem that is obtained by dropping the rank constraint and replacing the cardinality constraint with a constraint on the $l_1$ norm:
\begin{flalign}
\label{eq:sparse_pca_sdp}
			\underset{X}{\text{maximize}} \hspace*{1cm} & \textrm{Tr}(AX)  \\
			\text{s.t.} \hspace*{1cm} & \textrm{Tr}(X) = 1, \nonumber \\
			& {\bf{1}}^T |X| {\bf{1}} \leq k, \nonumber \\
                        & X \succeq 0. \nonumber
\end{flalign}
The optimal value of problem \eqref{eq:sparse_pca_sdp} is an upper bound on the optimal value of \eqref{eq:sparse_pca_orig}. Further, when the solution $X_1$ to \eqref{eq:sparse_pca_sdp} has rank equal to one, the SDP relaxation is \emph{tight} and the dominant eigenvector $x_1$ of $X_1$ is the optimal loading of the first sparse PC \cite{dAspremont07}. When a rank one solution is not obtained, the dominant eigenvector can still be retained as an approximate solution to the problem. Further sparse PCs can be obtained by deflating $A$ to obtain:
$$A_2 = A - (x_1^T A x_1) x_1 x_1^T$$
and re-solving the SDP with $A_2$ in place of $A$ (and iterating this procedure). 

The framework presented in this paper can be used to obtain LP and SOCP relaxations of \eqref{eq:sparse_pca_orig} by replacing the constraint $X \succeq 0$ by the constraint $X \in DD_n^*$ or $X \in SDD_n^*$ respectively, where $DD_n^*$ and $SDD_n^*$ are the dual cones of $DD_n$ and $SDD_n$ (see~\cite[Section 3.3]{basis_pursuit_Ahmadi_Hall} for a description of these dual cones). Since $S_n^+ \subseteq SDD_n^* \subseteq DD_n^*$, we are guaranteed that the resulting optimal solutions will be upper bounds on the SDP solution. Further, as in the SDP case, the relaxation is tight when a rank-one solution is obtained, and once again, if a rank-one solution is not obtained, we can still use the dominant eigenvector as an approximate solution.

We first consider Example $6.1$ from \cite{dAspremont07}. In this example, there are three hidden variables distributed normally:
$$ V_1 \sim \mathcal{N}(0,290), \ V_2 \sim \mathcal{N}(0,300), \ V_3 = -0.3 V_1 + 0.925 V_2 + \epsilon, \ \epsilon \sim \mathcal{N}(0,1).$$
These hidden variables generate $10$ observed variables:
$$X_i = V_j + \epsilon_i^j, \epsilon_i^j \sim \mathcal{N}(0,1),$$
with $j = 1$ for $i = 1, \dots, 4$, $j = 2$ for $i = 5, \dots, 8$ and $j = 3$ for $i = 9,10,$ and $\epsilon_i^j$ independent for $j = 1,2,3$ and $i = 1,\dots,10$. This knowledge of the distributions of the hidden and observed variables allows us to compute the exact $10 \times 10$ covariance matrix for the observed variables. The sparse PCA algorithm described above can then be applied to this covariance matrix. Table \ref{tab:sparse_pca_small_comparisons} presents the first two principal components computed using standard PCA, DDP, SDDP and DSPCA (corresponding to SDP). As the table illustrates, the loadings corresponding to the first two PCs computed using standard PCA are not sparse. All other methods (DDP, SDDP, and SDP) give \emph{exactly} the same answer and correspond to a rank-one solution, i.e., the optimal sparse solution. As expected, the sparsity comes at the cost of a reduction in the variance explained by the PCs (see \cite{Zou06} for computation of the explained variance).

\begin{table}
\footnotesize
\tabcolsep=0.10cm
\begin{center}
  \begin{tabular}{ | c | c | c | c | c | c | c | c | c | c | c | c | c | }
    \hline 
     & $X_1$ & $X_2$ & $X_3$ & $X_4$ & $X_5$ & $X_6$ & $X_7$ & $X_8$ & $X_9$ & $X_{10}$ & expl. var. \\ \hline
    PCA, PC1 & 0.116 & 0.116 & 0.116 & 0.116 & -0.395 & -0.395 & -0.395 & -0.395 & -0.401 & -0.401 & 60.0 \%  \\ 
    PCA, PC2 & -0.478 & -0.478 & -0.478 & -0.478 & -0.145 & -0.145 & -0.145 & -0.145 & 0.010 & 0.010 & 39.6 \%  \\ \hline 
    Other, PC1 & 0 & 0 & 0 & 0 & -0.5 & -0.5 & -0.5 & -0.5 & 0 & 0 & 40.9 \%  \\
    Other, PC2 & 0.5 & 0.5 & 0.5 & 0.5 & 0 & 0 & 0 & 0 & 0 & 0 & 39.5 \%  \\
    \hline
  \end{tabular}
    \caption{Comparison of loadings of principal components and explained variances for standard PCA and sparse versions. Here, the label ``Other'' denotes sparse PCA based on SDP, DDP, and SDDP. Each of these gives the optimal sparse solution.}
\label{tab:sparse_pca_small_comparisons}

  \end{center}
  \end{table}

Next, we consider a larger-scale example. We generate five random $100 \times 100$ covariance matrices of rank $4$. Table \ref{tab:sparse_pca_large_comparisons} presents the running times, number of non-zero entries (NNZ) in the top principal component\footnote{Note that the entries of the PCs were thresholded lightly in order to remove spurious non-zero elements arising from numerical inaccuracies in the solvers.}, optimal value of the program \eqref{eq:sparse_pca_sdp} (Opt.), and the explained variance (Expl.). The optimal values for DDP and SDDP upper bound the SDP solution as expected, and are quite close in value. The number of non-zero entries and explained variances are comparable across the different methods. The running times are between $1100$ and $2000$ times faster for DDP in comparison to SDP, and between $900$ and $1400$ times faster for SDDP. Hence the example illustrates that one can obtain a very large speedup with the approach presented here, with only a small sacrifice in quality of the approximate sparse PCs.

\begin{table}
\footnotesize
\tabcolsep=2pt
\begin{center}
  \begin{tabular}{ | c || c | c | c | c || c | c | c | c || c | c | c | c | }
    \hline 
    & \multicolumn{4}{|c||}{{\bf{DDP}}} & \multicolumn{4}{|c||}{{\bf{SDDP}}} & \multicolumn{4}{|c|}{{\bf{DSPCA}}} \\
    \hline
    No. & Time (s) & NNZ & Opt. & Expl.     & Time (s) & NNZ & Opt. & Expl.    & Time (s)   & NNZ & Opt. & Expl. \\ \hline
    1 & 0.89       & 4 & 35.5   & 0.067 \%  & 1.40     & 14  & 30.6 & 0.087 \% & 1296.9     & 5   & 30.5 & 0.086 \% \\ 
    2 & 1.18       & 4 & 36.6   & 0.071 \%  & 1.42     & 14  & 34.4 & 0.099 \% & 1847.3     & 6   & 33.6 & 0.092 \% \\ 
    3 & 1.46       & 4 & 49.5   & 0.079 \%  & 1.40     & 24  & 41.3 & 0.097 \% & 1633.0     & 16  & 40.2 & 0.096 \% \\ 
    4 & 1.14       & 4 & 44.1   & 0.072 \%  & 1.80     & 17  & 38.7 & 0.10 \%  & 1984.7     & 8   & 37.6 & 0.091 \% \\
    5 & 1.10       & 4 & 36.7   & 0.060 \%  & 1.53     & 34  & 33.0 & 0.068 \% & 2179.6     & 10  & 31.7 & 0.105 \% \\
    \hline
  \end{tabular}
    \caption{Comparison of running times, number of non-zero elements, optimal values, and explained variances for five large-scale sparse PCA examples using covariance matrices of size $100 \times 100$.}
\label{tab:sparse_pca_large_comparisons}

  \end{center}
  \end{table}

\subsection{Applications in control theory}
\label{sec:controls_apps}

As a final application of the methods presented in this paper, we consider two examples from control theory and robotics. The applications of SOS programming in control theory are numerous and include the computation of regions of attraction of polynomial systems \cite{PhD:Parrilo}, feedback control synthesis \cite{some_control_apps_sos}, robustness analysis \cite{Topcu10}, and computation of the joint spectral radius for uncertain linear systems \cite{Parrilo08}. A thorough treatment of the application of the (S)DSOS approach to control problems is beyond the scope of this paper but can be found in a different paper \cite{Majumdar14a}, which is joint work with Tedrake. Here we briefly highlight two examples from \cite{Majumdar14a} that we consider particularly representative.

Most of the applications in control theory mentioned above rely on SOS programming for checking \emph{Lyapunov inequalities} that certify stability of a nonlinear system. As discussed in Section \ref{sec:intro_applications}, one can compute (inner approximations of) the region of attraction (ROA) of a dynamical system $\dot{x} = f(x)$ by finding a Lyapunov function $V:\mathbb{R}^n\rightarrow\mathbb{R}$ that satisfies the following conditions:

\begin{equation}\label{eq:lyap_inequalities}
\begin{aligned}
&V(x)>0\quad \forall x\neq0,  \quad \quad \mbox{and}\\ &\dot{V}(x)=\langle\nabla V(x),f(x)\rangle<0\quad \forall x\in\{x|\ V(x)\leq\beta, x\neq 0\}.
\end{aligned}
\end{equation}

This guarantees that the set $\{x|\ V(x)\leq\beta\}$ is a subset of the ROA of the system, i.e. initial conditions that begin in this sublevel set converge to the origin. For polynomial dynamical systems, one can specify sufficient conditions for \eqref{eq:lyap_inequalities} using SOS programming \cite{PhD:Parrilo}. By replacing the SOS constraints with (S)DSOS constraints, we can compute inner approximations to the ROA more efficiently. While the approximations will be more conservative in general, the ability to tradeoff conservatism with computation time and scalability is an important one in control applications.

\subsubsection{Region of attraction for an inverted $N$-link pendulum}\label{subsub:ROA}

As an illustration, we consider the problem of computing ROAs for the underactuated $N$-link pendulum depicted in Figure \ref{fig:6link_cartoon}. This system has $2N$ states $x = [\theta_1,\dots,\theta_N,\dot{\theta}_1,\dots,\dot{\theta}_N]$ composed of the joint angles (with the vertical line) and their derivatives. There are $N-1$ control inputs (the joint closest to the base is not actuated). We take the unstable ``upright'' position of the system to be the origin of our state space and design a Linear Quadratic Regulator (LQR) controller in order to stabilize this equilibrium. A polynomial approximation of the dynamics of the closed loop system is obtained by a Taylor expansion of order $3$. Using Lyapunov functions of degree 2 results in the time derivative of the Lyapunov function being quartic and hence yields dsos/sdsos/sos constraints on a polynomial of degree 4 in $2N$ variables.

\begin{figure}
\minipage{0.2\textwidth}
\centering
\includegraphics[trim = 20mm 0mm 20mm 0mm, clip, width=.3\textwidth]{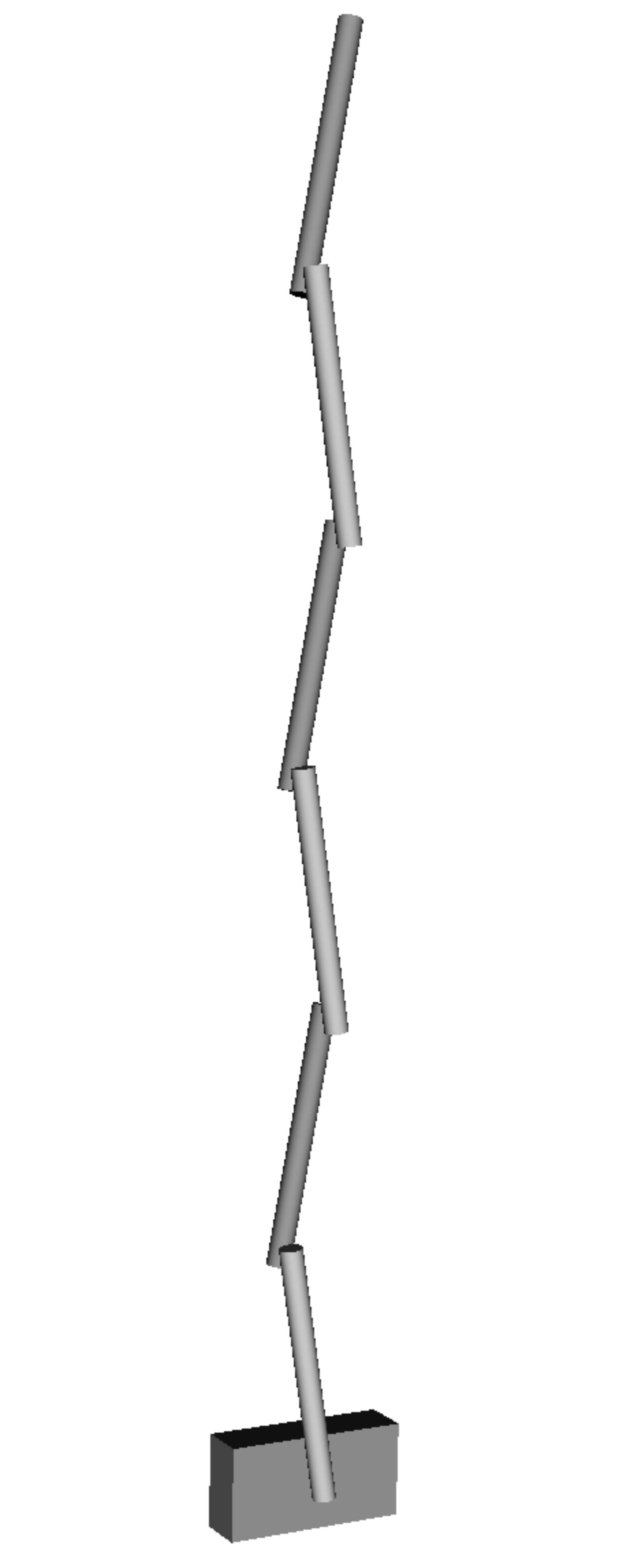}
\caption{An illustration of the N-link pendulum system (with N = 6).}
\label{fig:6link_cartoon}
\endminipage \hfill
\minipage{0.1\textwidth}
\endminipage
\minipage{0.75\textwidth}
\centering
\subfigure[$\theta_1$-$\dot{\theta}_1$ subspace.]{%
\includegraphics[trim = 15mm 70mm 20mm 70mm, clip, width=.38\textwidth]{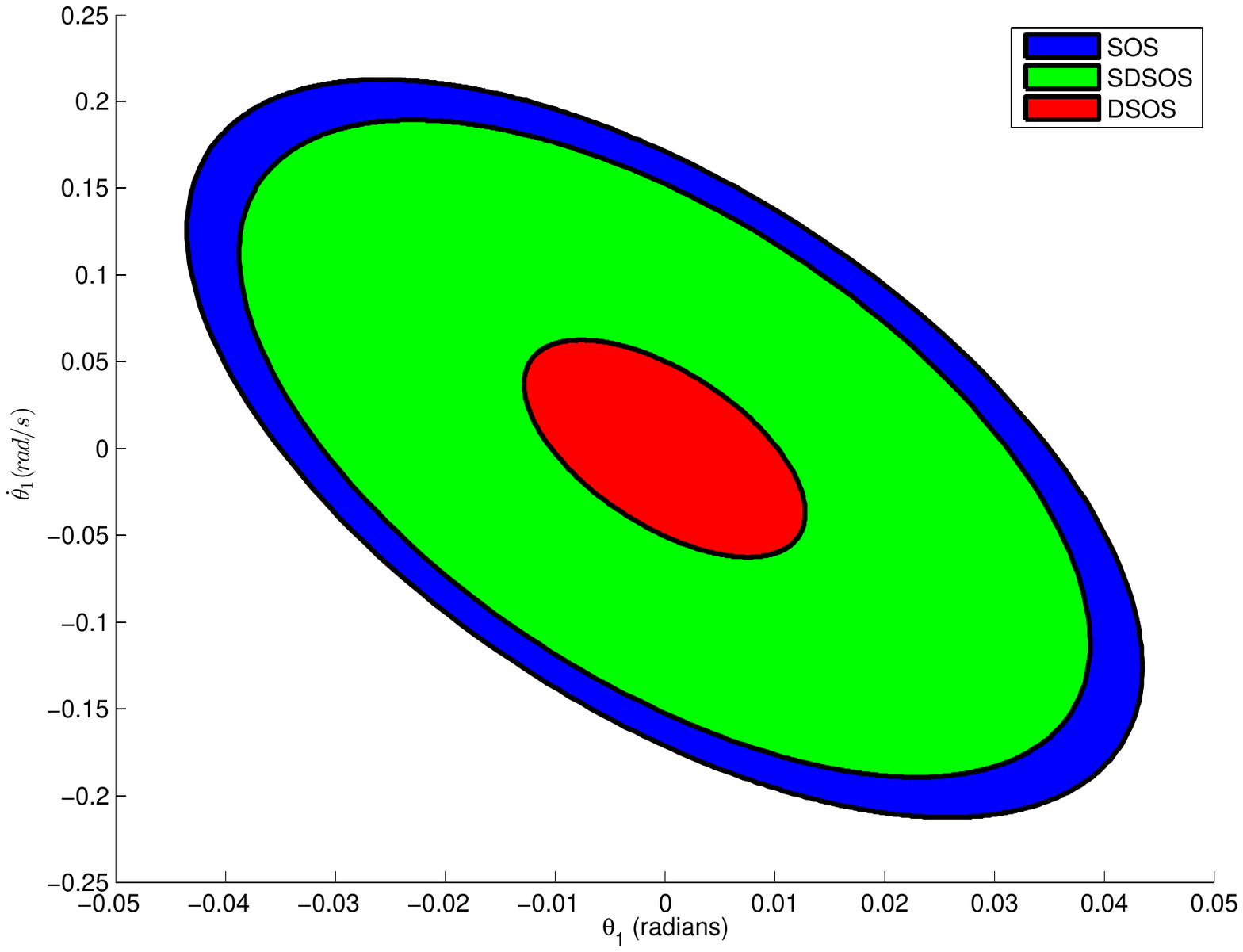}
\label{fig:6link_roa1}}
\subfigure[$\theta_6$-$\dot{\theta}_6$ subspace.]{%
\includegraphics[trim = 15mm 70mm 20mm 70mm, clip, width=.38\textwidth]{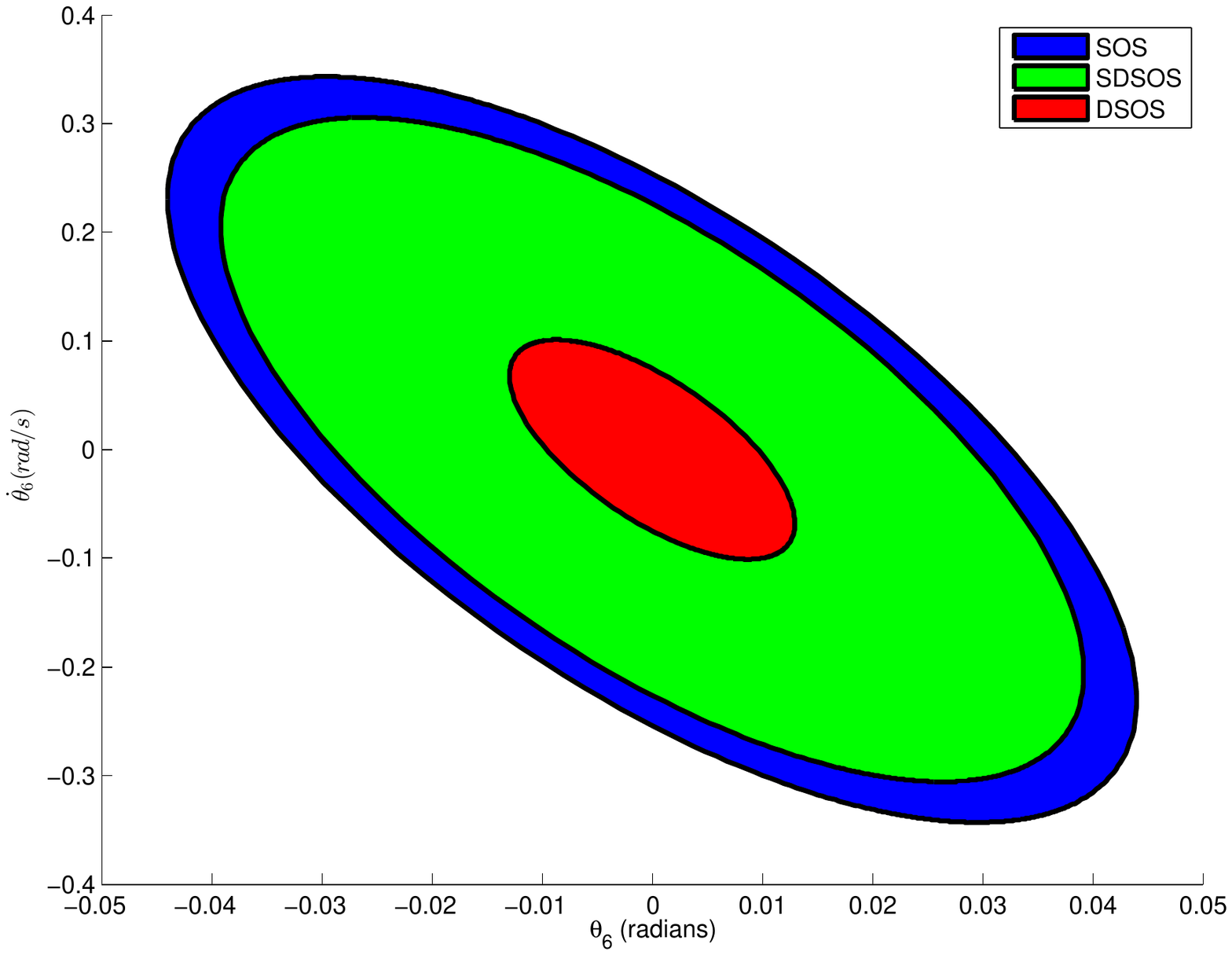}
\label{fig:6link_roa6}}

\caption{Figure reproduced from \cite{Majumdar14a} comparing projections of the ROAs computed for the 6-link pendulum system using DSOS, SDSOS and SOS programming.}
\label{fig:nlink_system}
\endminipage
\end{figure}

Figures \ref{fig:6link_roa1} and \ref{fig:6link_roa6} compare projections of the ROAs computed for the system with $N=6$ onto two $2-$dimensional subspaces of the state space. As the plots suggest, the ROA computed using SDSOS is only slightly more conservative than the ROA computed using SOS programming. In fact, comparing the volumes of the two ROAs, we find:

\begin{equation}
\frac{Vol_{\textrm{ROA-sdsos}}^{1/2N}}{Vol_{\textrm{ROA-sos}}^{1/2N}} = 0.79.
\end{equation}

Here, the volumes have been correctly rescaled in the standard manner by the dimension of the ambient space. The ROA computed using DSOS is much smaller (but still potentially useful in practice). 

Comparing running times of the different approaches on this example, we find that the LP corresponding to the DSOS program takes 9.67 seconds, and the SOCP corresponding to the SDSOS program takes 25.9 seconds. The running time of the SOS program is 1526.5 seconds using MOSEK and 23676.5 seconds using SeDuMi. Hence, in particular DSOS is approximately $2500$ times faster than SOS using SeDuMi and $150$ times faster than SOS using MOSEK, while SDSOS is $900$ times faster in comparison to SeDuMi and $60$ times faster than MOSEK for SOS. 

Of course the real benefit of our approach is that we can scale to problems where SOS programming ceases to run due to memory/computation constraints. Table \ref{tab:nlink_runtimes} illustrates this ability by comparing running times of the programs obtained using our approach with SOS programming for different values of $2N$ (number of states). As expected, for cases where the SOS programs do run, the DSOS and SDSOS programs are significantly faster. Further, the SOS programs obtained for $2N > 12$ are too large to run (due to memory constraints). In contrast, our approach allows us to handle almost twice as many states.

\begin{center}
\begin{table*}[ht]
{\small
	\tabcolsep=2pt
\hfill{}
 \begin{tabular}{ | l | c | c | c | c | c | c | c | c | c | c | }
    \hline 
    2N ($\#$ states) & 4 & 6 & 8 & 10 & 12 & 14 & 16 & 18 & 20 & 22 \\ \hline
    DSOS & $< 1$ & 0.44 & 2.04 & 3.08 & 9.67 & 25.1 & 74.2 & 200.5 & 492.0 & 823.2  \\ \hline
    SDSOS & $< 1$ & 0.72 & 6.72 & 7.78 & 25.9 & 92.4 & 189.0 & 424.74& 846.9 & 1275.6  \\ \hline
    SOS (SeDuMi) & $< 1$ & 3.97 & 156.9 & 1697.5 & 23676.5 & $\infty$ & $\infty$ & $\infty$ & $\infty$ & $\infty$ \\ \hline
    SOS (MOSEK) & $< 1$ & 0.84 & 16.2 & 149.1 & 1526.5 & $\infty$ & $\infty$ & $\infty$ & $\infty$ & $\infty$ \\ 
    \hline
\end{tabular}}
\hfill{}
\caption{Table reproduced from \cite{Majumdar14a} showing runtime comparisons (in seconds) for ROA computations on N-link system.}
\label{tab:nlink_runtimes}
\end{table*}
\vspace{-20pt}
\end{center}

\subsubsection{Control synthesis for  a humanoid robot}

\def\big{\includegraphics[trim = 100mm 150mm 200mm 10mm, clip, width=0.3\textwidth]{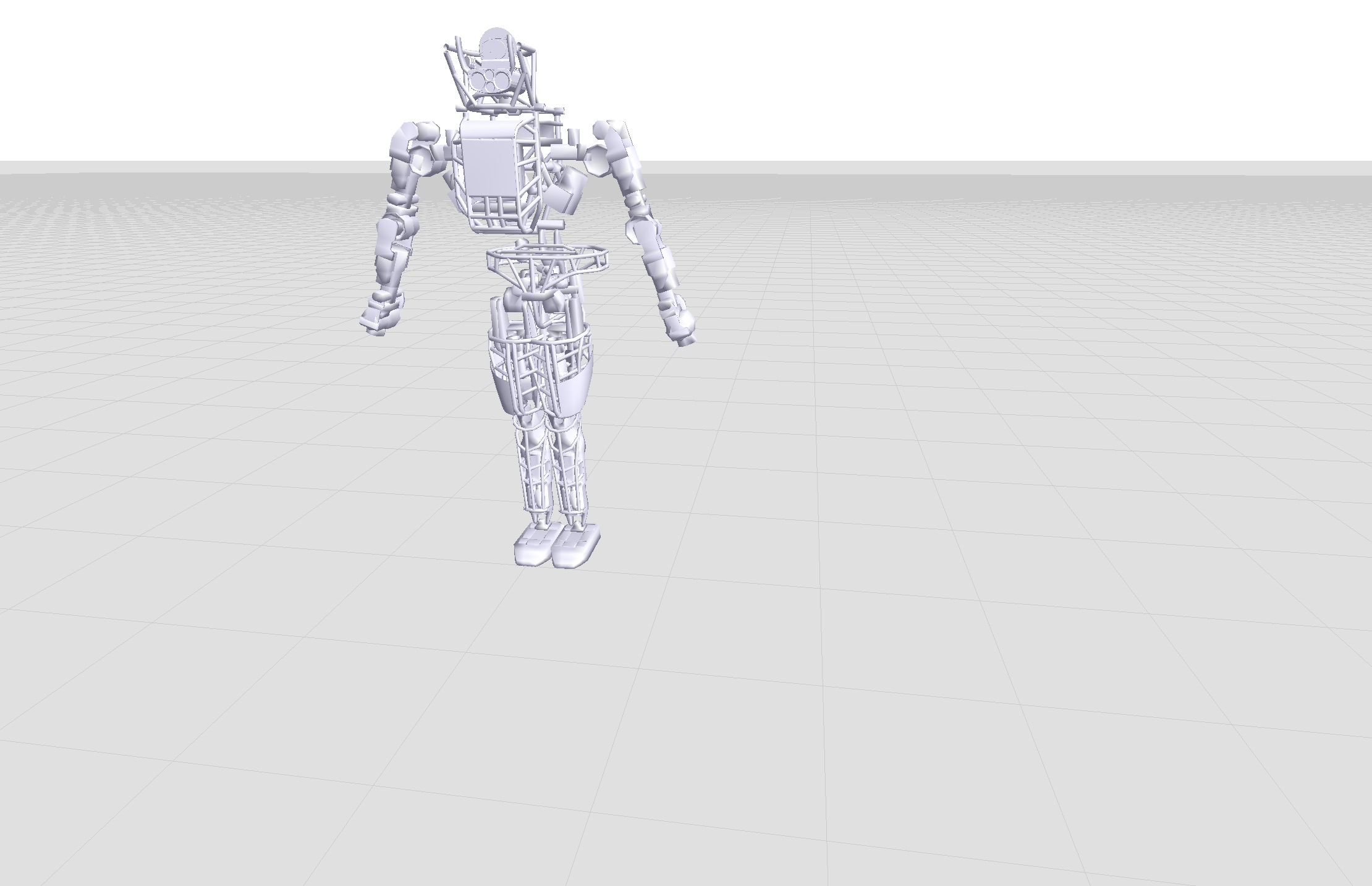}}
\def\little{\includegraphics[width=0.15\textwidth]{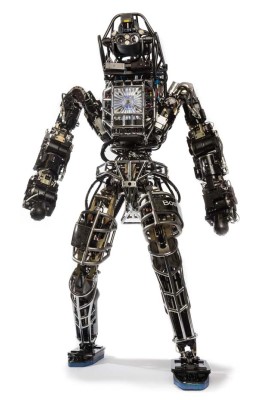}}
\def\stackalignment{l}
\begin{figure}
\centering
\bottominset{\little}{\big}{4pt}{160pt}
\caption{A visualization of the model of the ATLAS humanoid robot, along with the hardware platform (inset) on which the parameters of the model are based. (Picture of robot reproduced with permission from Boston Dynamics.)}
\label{fig:atlas_hardware}
\end{figure}

In our final example, we highlight a robotics application considered in joint work with Tedrake in \cite{Majumdar14a}. Control of humanoid robots is an important problem in robotics and presents a significant challenge due to the nonlinear dynamics of the system and high dimensionality of the state space. Here we show how the approach described in this paper can be used to design a balancing controller for a model of the ATLAS robot shown in Figure \ref{fig:atlas_hardware}. This robot was designed and built by Boston Dynamics Inc. and was used for the 2015 DARPA Robotics Challenge.

Our model of the robot is based on physical parameters of the hardware platform and has $30$ states and $14$ inputs. The task considered here is to balance the robot on its right toe. The balancing controller is constructed by searching for both a quadratic Lyapunov function and a linear feedback control law in order to maximize the size of the resulting region of attraction. The Lyapunov conditions are imposed using SDSOS programming. We Taylor expand the dynamics about the equilibrium to degree $3$ in order to obtain polynomial dynamics. The total computation time is approximately $22.5$ minutes. We note that SOS programming is unable to handle this system due to memory (RAM) constraints. 

Figure \ref{fig:atlas_init_conds} demonstrates the performance of the resulting controller from SDSOS programming by plotting initial configurations of the robot that are stabilized to the fixed point. As the plot illustrates, the controller is able to stabilize a very wide range of initial conditions. A video of simulations of the closed loop system started from different initial conditions is available online at \href{http://youtu.be/lmAT556Ar5c}{http://youtu.be/lmAT556Ar5c}.

\begin{figure}
\centering
\subfigure[Nominal pose (fixed point)]{%
\includegraphics[trim = 250mm 200mm 250mm 10mm, clip, width=.128\textwidth]{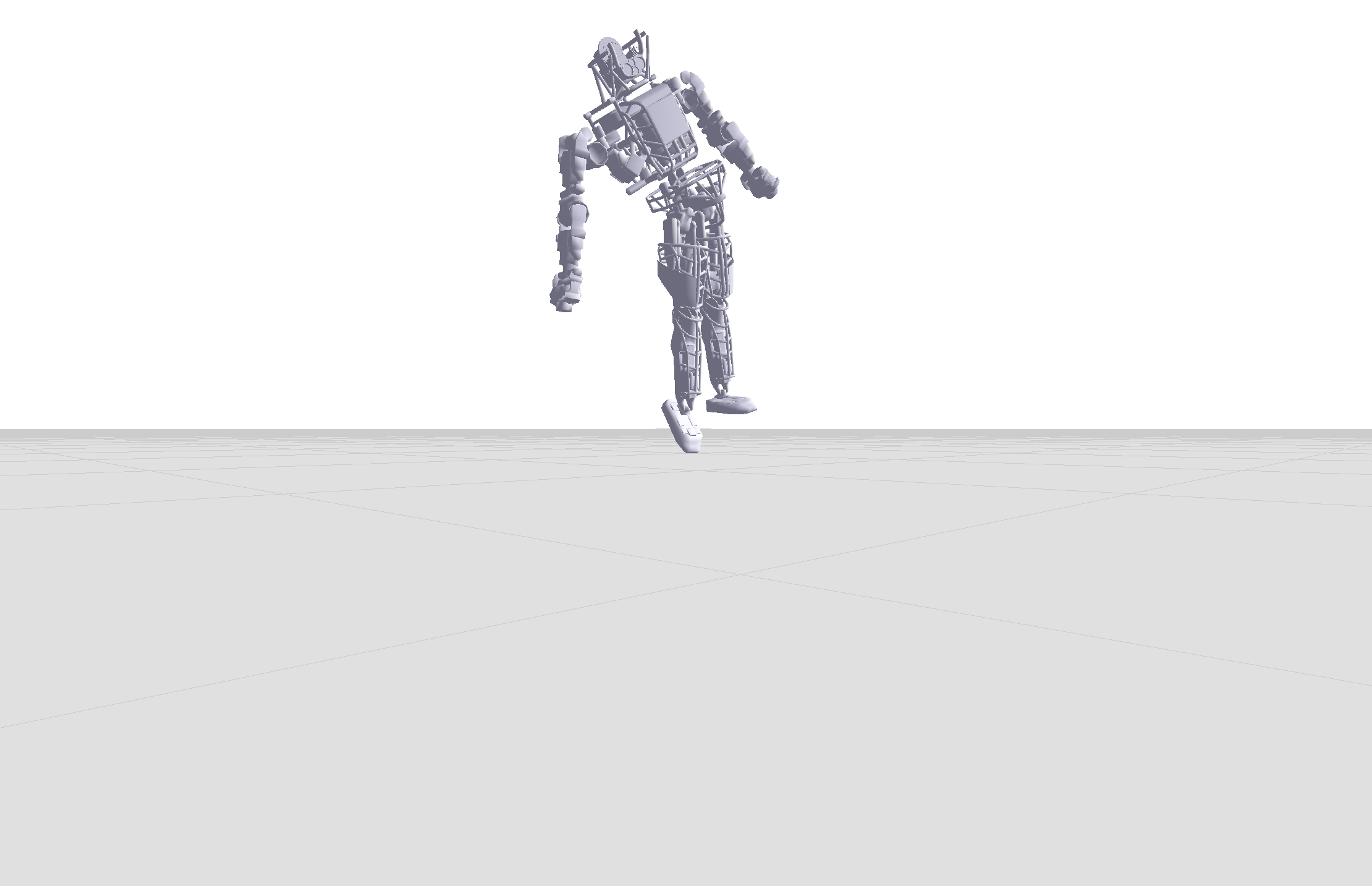}
\label{fig:atlas_pose_nominal}}
\hfill
\subfigure[Stabilized pose 1]{%
\includegraphics[trim = 250mm 200mm 250mm 10mm, clip, width=.128\textwidth]{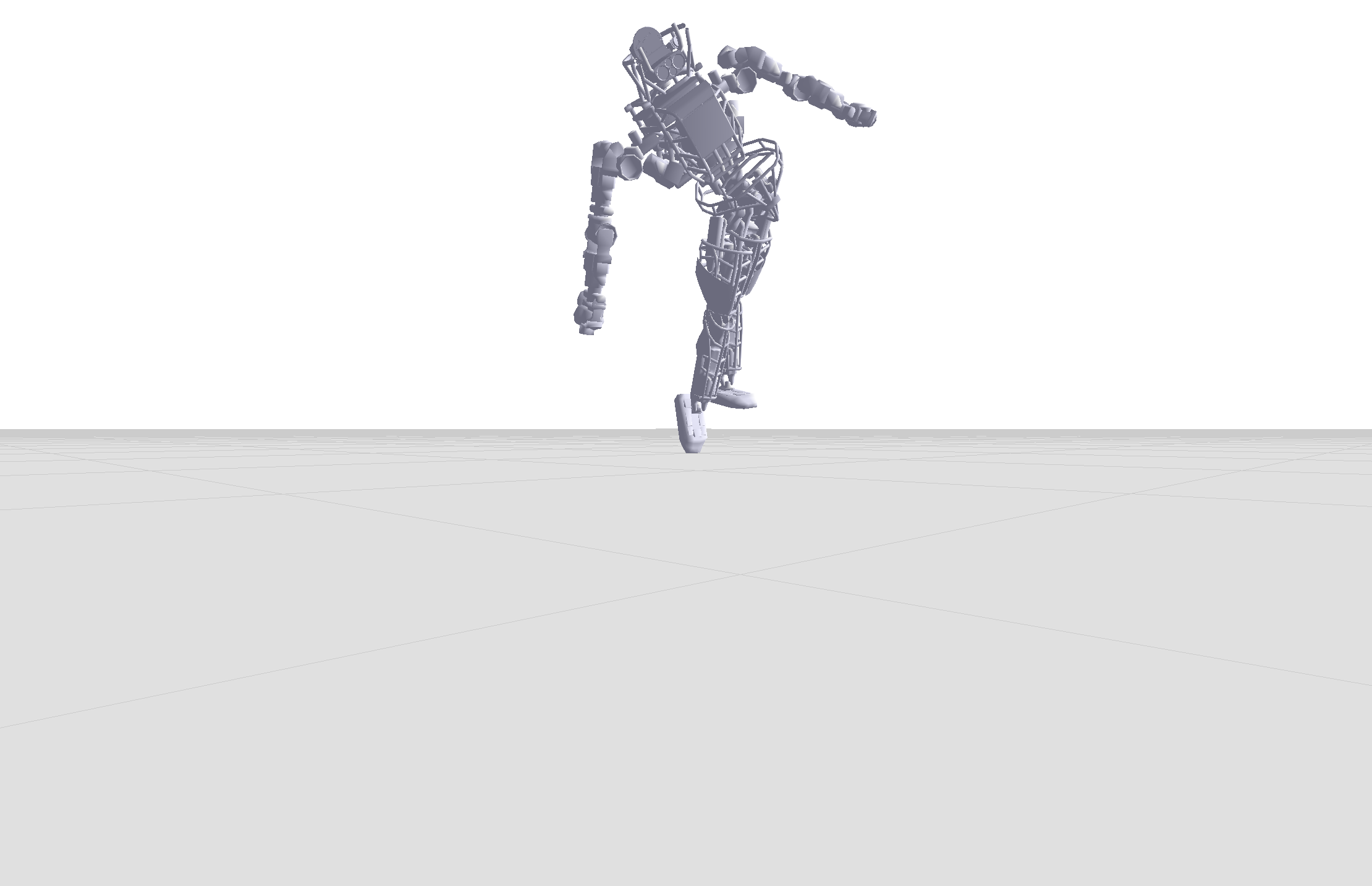}
\label{fig:atlas_pose1}}
\hfill
\subfigure[Stabilized pose 2]{%
\includegraphics[trim = 250mm 200mm 250mm 10mm, clip, width=.128\textwidth]{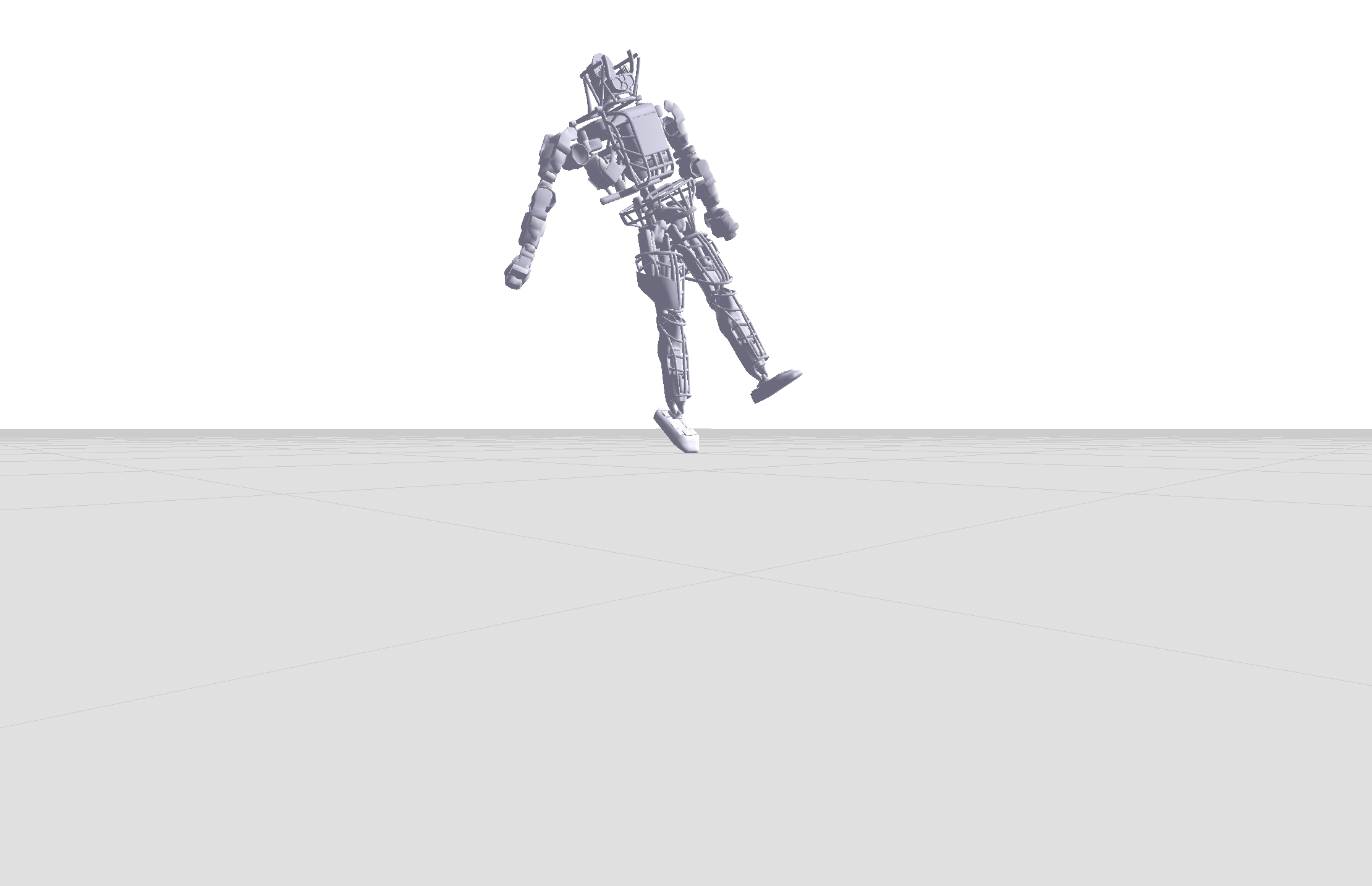}
\label{fig:atlas_pose2}}
\hfill
\subfigure[Stabilized pose 3]{%
\includegraphics[trim = 250mm 200mm 250mm 10mm, clip, width=.128\textwidth]{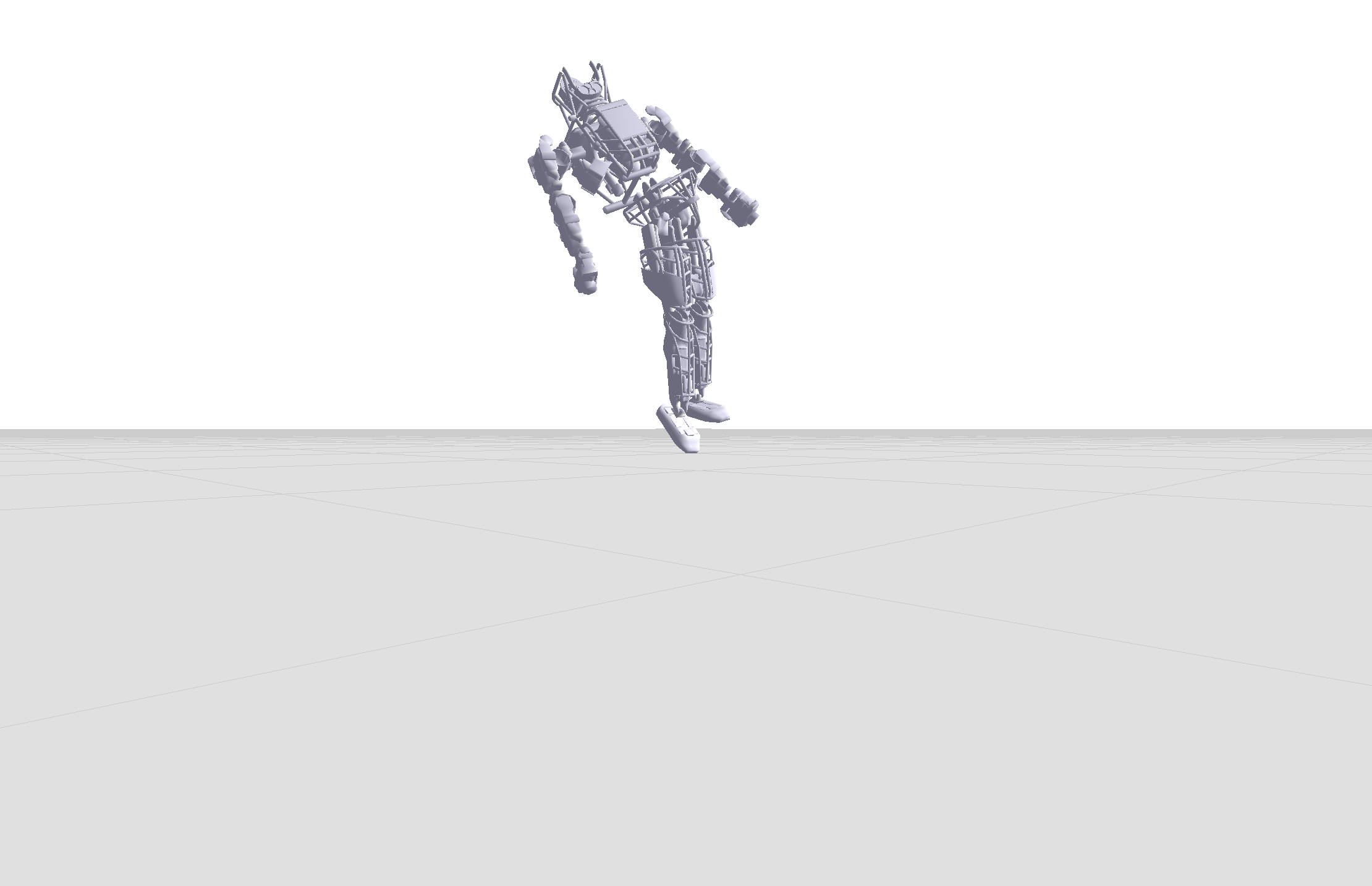}
\label{fig:atlas_pose3}}
\hfill
\subfigure[Stabilized pose 4]{%
\includegraphics[trim = 250mm 200mm 250mm 10mm, clip, width=.128\textwidth]{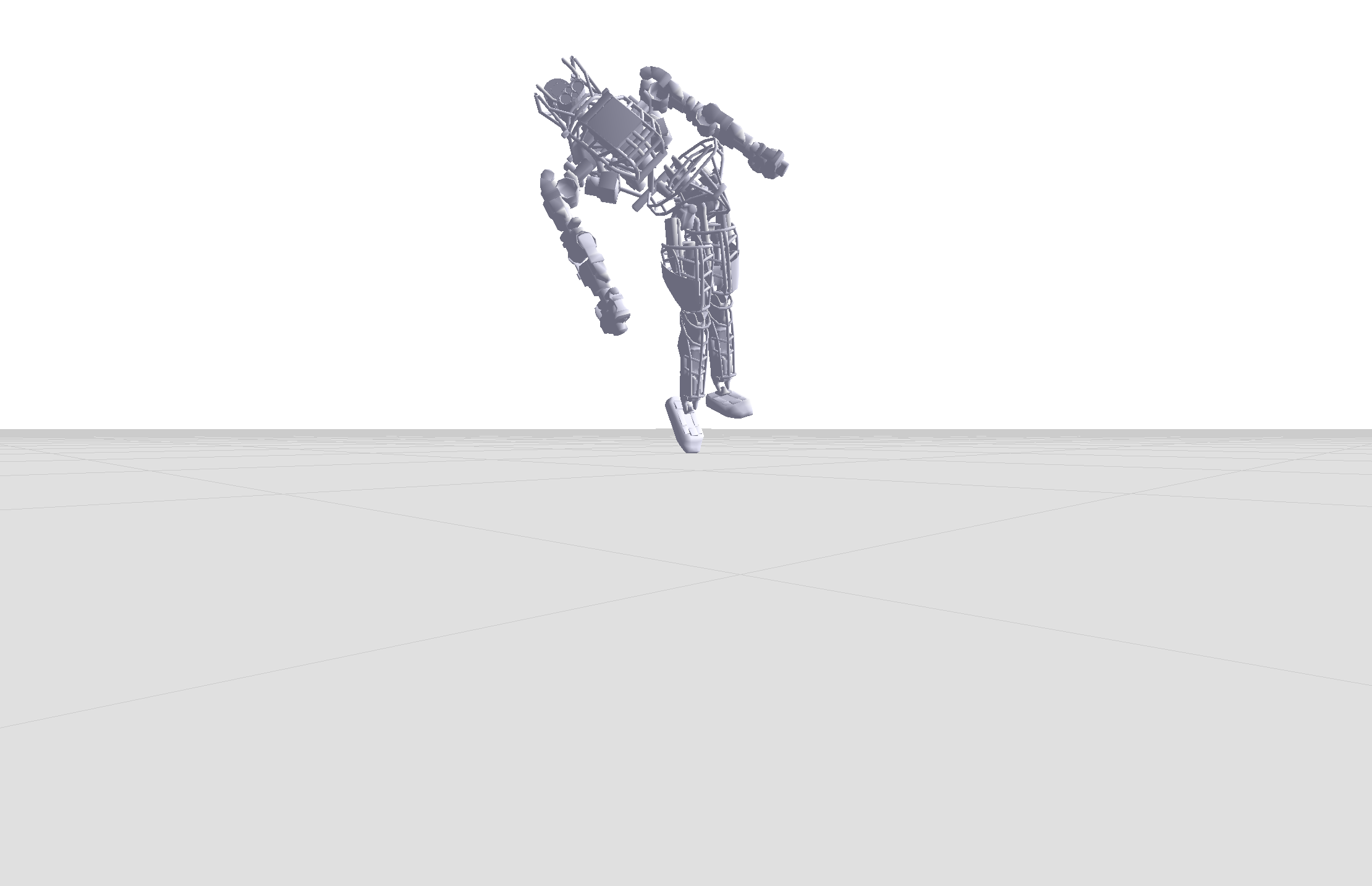}
\label{fig:atlas_pose4}}
\hfill
\subfigure[Stabilized pose 5]{%
\includegraphics[trim = 250mm 200mm 250mm 10mm, clip, width=.128\textwidth]{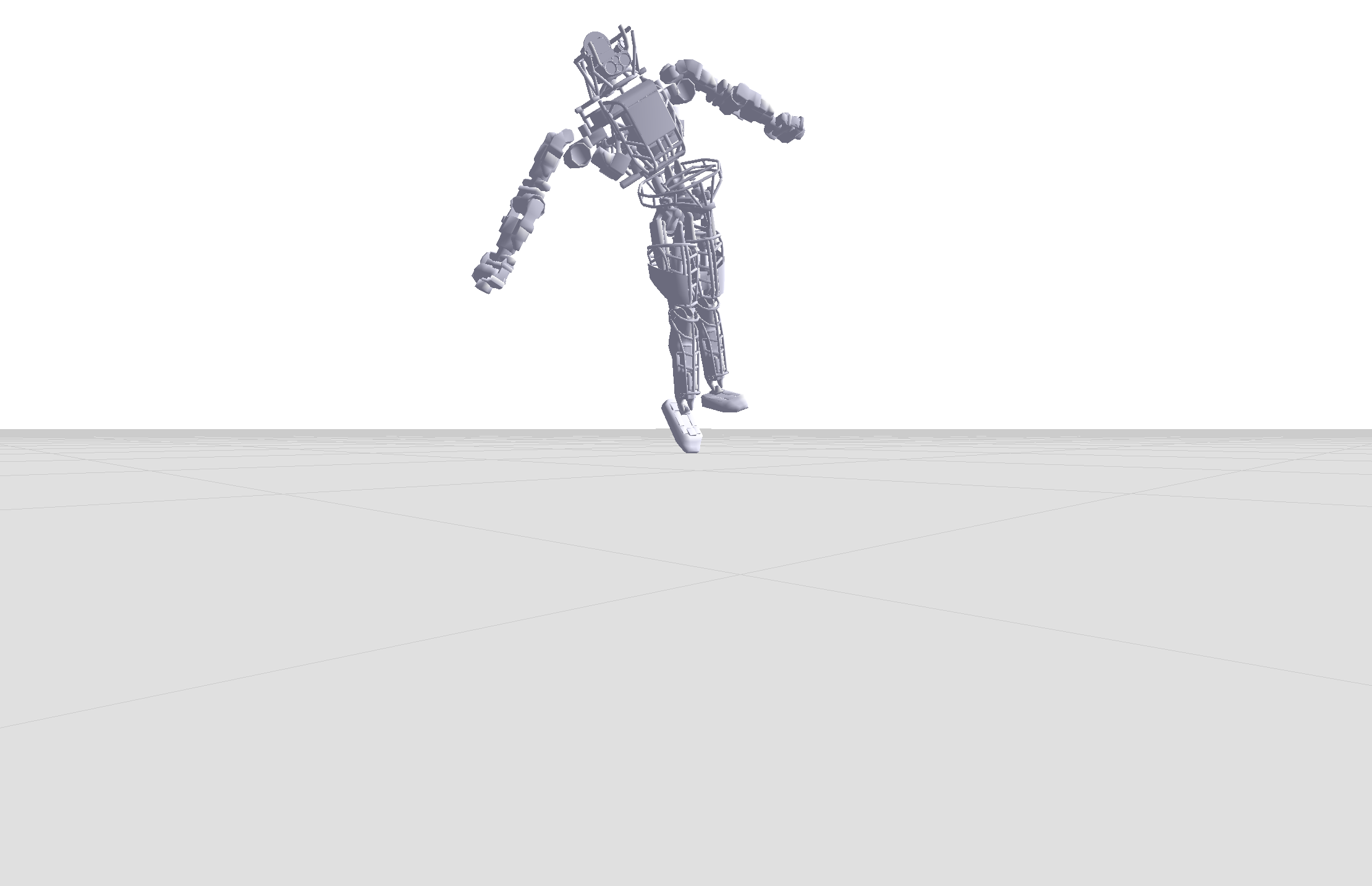}
\label{fig:atlas_pose5}}
\hfill
\caption{Figure reproduced from \cite{Majumdar14a} showing the nominal position of the robot, i.e., the fixed point being stabilized (subplot (a)), and configurations of the robot that are stabilized by the controller designed using SDSOS programing (subplots (b)-(f)). A video of simulations of the controller started from different initial conditions is available online at \href{http://youtu.be/lmAT556Ar5c}{http://youtu.be/lmAT556Ar5c}. }
\label{fig:atlas_init_conds}
\vspace{-15pt}
\end{figure}

\section{Improvements on DSOS and SDSOS optimization}\label{sec:improvements}

While DSOS and SDSOS techniques result in significant gains in terms of solving times and scalability, they inevitably lead to some loss in solution accuracy when compared to the SOS approach. In this section, we briefly outline three possible strategies to mitigate this loss. The first two (Sections~\ref{subsec:cholesky} and \ref{subsec:column.generation}) generate sequences of linear and second order cone programs, while the third (Section~\ref{subsec:factor.width}) works with ``small'' (fixed-size) semidefinite programs. The reader may recall that the r-DSOS and r-SDSOS hierarchies of Section~\ref{subsec:rdsos.rsdsos} could also be used to improve on DSOS and SDSOS techniques. Like the first two methods that we present here, they did so while staying in the realm of LP and SOCP. They differ from these methods on two crucial points however: (i) they are more expensive to implement, and (ii) they approximate the cone of SOS polynomials ``blindly'' irrespective of a particular objective function. This is in contrast to the methods that we present in Section \ref{subsec:cholesky} and \ref{subsec:column.generation}, which approximate the cone in the direction of a specific linear objective function.

For brevity of exposition, we explain how all three strategies can be applied to approximate a generic semidefinite program:
\begin{flalign} 
\label{eq:sdp}
			\underset{X\in\mathcal{S}_n}{\text{minimize}} \hspace*{1cm} & \mbox{Tr}(CX)  \\
			\text{s.t.} \hspace*{1cm} & \mbox{Tr}(A_iX)=b_i,\quad  i=1,\ldots,m, \nonumber \\
			& X\succeq 0. \nonumber
\end{flalign}

A treatment tailored to the case of sum of squares programs can be found in the references we provide.


\subsection{Iterative change of basis}\label{subsec:cholesky} In~\cite{basis_pursuit_Ahmadi_Hall}, Ahmadi and Hall build on the notions of diagonal and scaled diagonal dominance to provide a sequence of improving inner approximations to the cone $P_n$ of psd matrices in the direction of the objective function of an SDP at hand. The idea is simple: define a family of cones\footnote{One can think of $DD(U)$ as the set of matrices that are dd after a change of coordinates via the matrix $U$.} $$DD(U)\mathrel{\mathop{:}}=\{M \in \mathcal{S}_n~|~ M=U^TQU \text{ for some dd matrix } Q \},$$parametrized by an $n \times n$ matrix $U$. Optimizing over the set $DD(U)$ is an LP since $U$ is fixed, and the defining constraints are linear in the coefficients of the two unknown matrices $M$ and $Q$. Furthermore, the matrices in $DD(U)$ are all psd; i.e., $\forall U,$ $DD(U) \subseteq P_n$.

The proposal in~\cite{basis_pursuit_Ahmadi_Hall} is to solve a sequence of LPs, indexed by $k$, by replacing the condition $X \succeq 0$ by $X \in DD(U_k)$:

\begin{flalign} 
\label{eq:LPChol}
			DSOS_k\mathrel{\mathop{:}}=\underset{X\in\mathcal{S}_n}{\text{min}} \hspace*{1cm} & \mbox{Tr}(CX)  \\
			\text{s.t.} \hspace*{1cm} & \mbox{Tr}(A_iX)=b_i,\quad  i=1,\ldots,m, \nonumber \\
			& X\in DD(U_k). \nonumber
\end{flalign}


The sequence of matrices $\{U_k\}$ is defined as follows
\begin{equation}\label{eq:defUk}
\begin{aligned}
U_0&=I\\
U_{k+1}&=\text{chol}(X_k),
\end{aligned}
\end{equation}
where $X_k$ is an optimal solution to the LP in (\ref{eq:LPChol}).

Note that the first LP in the sequence optimizes over the set of diagonally dominant matrices. By defining $U_{k+1}$ as a Cholesky factor of $X_k$, improvement of the optimal value is guaranteed in each iteration. Indeed, as $X_k=U_{k+1}^T I U_{k+1}$, and the identity matrix $I$ is diagonally dominant, we see that $X_{k} \in DD(U_{k+1})$ and hence is feasible for iteration $k+1$. This implies that the optimal value at iteration $k+1$ is at least as good as the optimal value at the previous iteration; i.e., $DSOS_{k+1}\leq DSOS_k$ (in fact, the inequality is strict under mild assumptions; see~\cite{basis_pursuit_Ahmadi_Hall}).

In an analogous fashion, one can construct a sequence of SOCPs that inner approximate $P_n$ with increasing quality. This time, we define a family of cones $${ SDD(U)\mathrel{\mathop{:}}=\{M \in \mathcal{S}_n ~|~ M=U^TQU, \text{ for some sdd matrix } Q\},}$$ parameterized again by an $n\times n$ matrix $U$. For any $U$, optimizing over the set $SDD(U)$ is an SOCP and we have $SDD(U)\subseteq P_n$. This leads us to the following iterative SOCP sequence:

\begin{flalign} 
\label{eq:SOCPchol}
			SDSOS_k\mathrel{\mathop{:}}=\underset{X\in\mathcal{S}_n}{\text{min}} \hspace*{1cm} & \mbox{Tr}(CX)  \\
			\text{s.t.} \hspace*{1cm} & \mbox{Tr}(A_iX)=b_i,\quad  i=1,\ldots,m, \nonumber \\
			& X\in SDD(U_k). \nonumber
\end{flalign}


Assuming existence of an optimal solution $X_k$ at each iteration, we can define the sequence $\{U_k\}$ iteratively in the same way as was done in (\ref{eq:defUk}). Using similar reasoning, we have $SDSOS_{k+1}\leq SDSOS_k$. In practice, the sequence of upper bounds $\{SDSOS_k\}$ approaches faster to the SDP optimal value than the sequence of the LP upper bounds $\{DSOS_k\}$.
\begin{figure}[h!]
	\begin{center}
		\mbox{
			\subfigure[LP inner approximations]
			{\label{subfig:DDCones}\scalebox{0.32}{\includegraphics{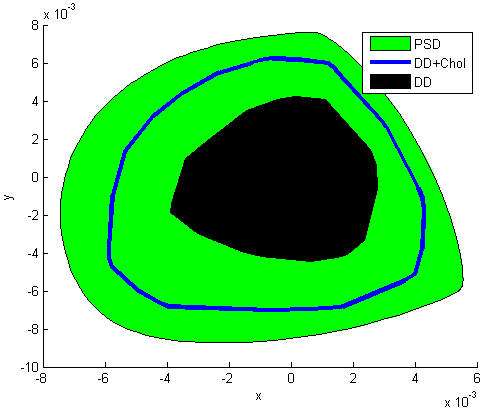}}}}
		\mbox{
			\subfigure[SOCP inner approximations]
			{\label{subfig:SDDCones}\scalebox{0.32}{\includegraphics{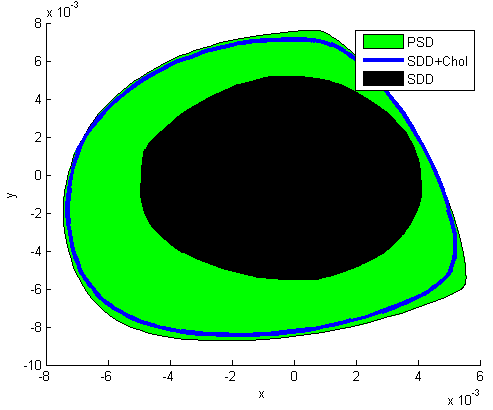}}}
		}
		
		\caption{Figure reproduced from~\cite{basis_pursuit_Ahmadi_Hall} showing improvement (in all directions) after one iteration of the change of basis algorithm.}
		\label{fig:DDSDDCones}
	\end{center}
\end{figure}
Figure~\ref{fig:DDSDDCones} shows the improvement (in every direction) obtained just by a single iteration of this approach. The outer set in green in both subfigures is the feasible set of a randomly generated semidefinite program. The sets in black are the DD (left) and the SDD (right) inner approximations. What is shown in dark blue in both cases is the boundary of the improved inner approximation after one iteration. Note that the SOCP in particular fills up almost the entire spectrahedron in a single iteration.

We refer the interested reader to~\cite{basis_pursuit_Ahmadi_Hall} for more details, in particular for an explanation of how the same techniques can be used (via duality) to \emph{outer approximate} feasible sets of semidefinite programs.

\subsection{Column generation}\label{subsec:column.generation}
In~\cite{col_gen_Ahmadi_Dash_Hall}, Ahmadi, Dash, and Hall design another iterative method for inner approximating the set of psd matrices using linear and second order cone programming. Their approach combines DSOS/SDSOS techniques with ideas from the theory of column generation in large-scale linear and integer programming. The high-level idea is to approximate the SDP in (\ref{eq:sdp}) by a sequence of LPs (parameterized by $t$):

\begin{flalign} 
\label{eq:LP.colgen}
			\underset{X\in\mathcal{S}_n, \alpha_i}{\text{minimize}} \hspace*{1cm} & \mbox{Tr}(CX)  \\
			\text{s.t.} \hspace*{1cm} & \mbox{Tr}(A_iX)=b_i,\quad  i=1,\ldots,m, \nonumber \\
			& X=\sum_{i=1}^t \alpha_i B_i, \nonumber \\
			\ & \alpha_i\geq 0, \quad i=1,\ldots, t,
\end{flalign}
where $B_1,\ldots,B_t$ are fixed psd matrices. These matrices are initialized to be the extreme rays of $DD_n$ (recall Lemma~\ref{lem:dd.corners}), i.e., all rank one matrices $v_iv_i^T$, where the vector $v_i$ has at most two nonzero components, each equal to $\pm 1$. Once this initial LP is solved, then one adds one (or sometimes several) new psd matrices $B_j$ to problem (\ref{eq:LP.colgen}) and resolves. This process then continues. In each step, the new matrices $B_j$ are picked carefully to bring the optimal value of the LP closer to that of the SDP. Usually, the construction of $B_j$ involves solving a ``\emph{pricing subproblem}'' (in terminology of the column generation literature), which adds appropriate cutting planes to the dual of (\ref{eq:LP.colgen}); see~\cite{col_gen_Ahmadi_Dash_Hall} for more details.

The SOCP analogue of this process is similar. The SDP in (\ref{eq:sdp}) is inner approximated by a sequence of SOCPs (parameterized by $t$):
\begin{flalign} 
\label{eq:SOCP.colgen}
			\underset{X\in\mathcal{S}_n, \Lambda_i\in\mathcal{S}_2}{\text{minimize}} \hspace*{1cm} & \mbox{Tr}(CX)  \\
			\text{s.t.} \hspace*{1cm} & \mbox{Tr}(A_iX)=b_i,\quad  i=1,\ldots,m, \nonumber \\
			& X=\sum_{i=1}^t V_i\Lambda_i V_i^T, \nonumber  \\
						\ & \Lambda_i\succeq 0, \quad i=1,\ldots, t, \nonumber 
\end{flalign}
where $V_1,\ldots,V_t$ are fixed $n \times 2$ matrices. They are initialized as the set of matrices that have zeros everywhere, except for a 1 in the first column in position $j$ and a 1 in the second column in position $k\neq j$. This gives exactly $SDD_n$ (via Lemma~\ref{lem:sdd=sum.2x2}). In subsequent steps, one (or sometimes several) appropriately-chosen matrices $V_i$ are added to problem (\ref{eq:SOCP.colgen}). These matrices are obtained by solving pricing subproblems and help bring the optimal value of the SOCP closer to that of the SDP in each iteration.

\begin{figure}[h]
	\begin{center}
		\mbox{
			\subfigure[LP iterations.]
			{\label{subfig:dsos.iters}\scalebox{0.32}{\includegraphics{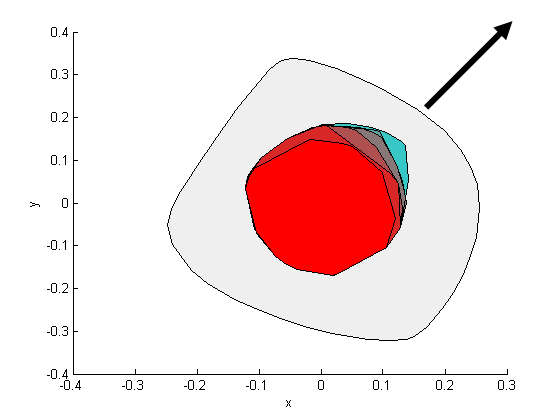}}}}
		\mbox{
			\subfigure[SOCP iterations.]
			{\label{subfig:sdsos.iters}\scalebox{0.32}{\includegraphics{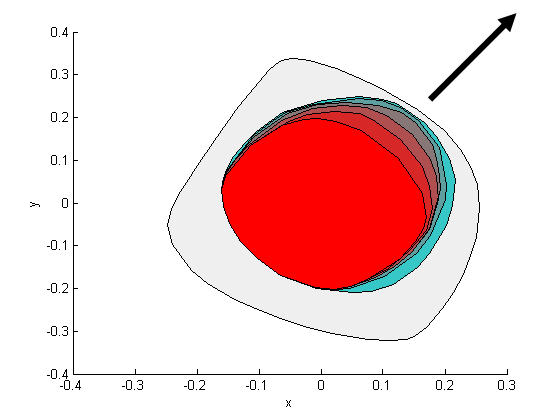}}}
		}
		
		\caption{Figure reproduced from~\cite{col_gen_Ahmadi_Dash_Hall} showing the successive improvement on the dd (left) and sdd (right) inner approximation of a spectrahedron via five iterations of the column generation method.}
		\label{fig:colgen}
	\end{center}
	\vspace{-10pt}
\end{figure}

Figure~\ref{fig:colgen} shows the improvement obtained by five iterations of this process on a randomly generated SDP, where the objective is to maximize a linear function in the northeast direction.
5
\subsection{Factor-width $k$ matrices with $k>2$}\label{subsec:factor.width}

A relevant generalization of sdd matrices is through the notion of \emph{factor-width}, defined by Boman et al.~\cite{sdd_factorwidth}. The factor-width of a symmetric psd matrix $Q$ is the smallest integer $k$ for which $Q$ can be written as $Q=VV^T$, where each column of $V$ contains at most $k$ nonzeros. Equivalently, the factor-width of $Q$ is the smallest $k$ for which $Q$ can be written as a sum of psd matrices that are nonzero only on a single $k\times k$ principal submatrix.

If we denote the cone of $n\times n$ symmetric matrices of factor-width $k$ by $FW^k_n$, we have that $FW^k_n\subseteq P_n$ for all $k\in\{1,\ldots,n\}$, $FW^2_n=SDD_n$ (cf. Lemma~\ref{lem:sdd=sum.2x2}), and $FW^n_n=P_n$. For values of $k\in\{2,\ldots,n-1\}$, one gets an increasingly more accurate inner approximation to the set of psd matrices, all outperforming the approximation given by sdd matrices. We did not consider these cones in this paper as it is already known that for $k=3$, a representation based on second order cone programming is not possible~\cite{fawzi_socp}. Nevertheless, working with $FW^3_n$ can be useful as it involves semidefinite constraints that are small ($3\times 3$) and hence efficiently solvable. Unfortunately though, optimizing over $FW^3_n$ requires $O(n^3)$ such semidefinite constraints which in many applications can be prohibitive. We believe that a promising future direction would be to use the column generation framework on Section~\ref{subsec:column.generation} to work with a subset of the extreme rays of $FW^k_n$ (for small $k$) and generate additional ones on the fly based on problem structure.

Some initial experiments with $FW^k_n$ have been performed by Ding and Lim~\cite{ding_lim}. The authors also develop self-concordant barrier functions for these cones and show that any second order cone program can be written as an optimization problem over $FW^2_n$, i.e., a scaled diagonally dominant program (recall the definition from Section~\ref{subsec:dsos.sdsos.cones}). In related work, Permenter and Parrilo~\cite{permenter_parrilo_facial_reduction} consider optimization problems over $FW^k_n$ (and their dual cones) for facial reduction in semidefinite programming. We believe the notion of factor-width is likely to receive more attention in upcoming years in the theory of matrix optimization.

\section{Conclusions}
\label{sec:conclusions}


We have proposed more scalable alternatives to SOS programming by introducing inner approximations to the sum of squares cone in the form of the cones of dsos and sdsos polynomials. These cones can be optimized over using LP and SOCP respectively \aaa{and afford us considerable gains in terms of scalability by trading off solution quality}. Our numerical examples from a diverse range of applications including polynomial optimization, combinatorial optimization, statistics and machine learning, derivative pricing, control theory, and robotics demonstrate that with reasonable tradeoffs in solution quality, we can handle problem sizes that are significantly beyond the current capabilities of SOS programming (at least without exploiting structure or resorting to specialized solvers). In particular, we have shown that our approach is able to produce useful lower bounds on dense polynomial optimization problems with up to $70$ polynomial variables (with degree-4 polynomials), or design stabilizing controllers for realistic robotic control systems with as many as 30 state variables. \aaa{The Supplementary Material of this paper provides a brief but complete tutorial on the toolbox that was used to generate the numerical results in the paper.} In addition, since the extended abstract of this work first appeared, our techniques have been applied by other authors to areas such as \aaa{conic optimization~\cite{Joao_sdd}}, distance geometry~\cite{dias2016diagonally}, portfolio selection in finance~\cite{bai2015alternating}, and power engineering~\cite{acopf_dsos,Leo_student_thesis}.


On the theoretical front, we have shown that the (S)DSOS approach shares some of the theoretical asymptotic guarantees usually associated with SOS programming (such as certificates of nonnegativity of forms, arbitrarily tight approximation of copositive programs, and a converging hierarchy of lower bounds for polynomial optimization problems). Finally, in the last section of this paper, we reviewed recent approaches that were developed with the idea of bridging the gap between the (S)DSOS approach and the SOS one. One can obtain improved \aaa{approximations} using these methods at the cost 
of additional computation time. \aaa{This raises as a natural research direction the possibility of \emph{quantifying} these tradeoffs, either for one-shot or for adaptive approximation schemes.}


To conclude, we would like to emphasize that a key benefit of our approach is that it can be applied in \emph{any} \aaa{application where SOS programming is used.} 
Our hope is that the (S)DSOS approach may open up application areas that have previously been beyond reach due to the limitations in scalability of SOS programming.  While it is true that improvements to SDP solver technology will always push the boundary of problems that are within the reach of SOS programming, we believe that concurrent improvements in LP and SOCP solver technology will always leave a range of additional problems that can be handled by relaxations such as ours. For example, we foresee exciting possibilities for real-time applications~\cite{ahmadi2016some},~\cite{boyd_real_time_convex_signal_processing} using technology for real-time linear and second order cone programming that is already coming to fruition~\cite{domahidi2013ecos},~\cite{cvxgen}.



{\footnotesize
\section*{Acknowledgments}
We would like to thank Pablo Parrilo for acquainting us with scaled diagonally dominant matrices, and Georgina Hall for many corrections and simplifications on the first draft of this work. Our gratitude extends to Russ Tedrake and the members of the Robot Locomotion Group at MIT for several insightful discussions, particularly around control applications. We thank Frank Permenter and Mark Tobenkin for their help with the numerical implementation of DSOS/SDSOS programs in SPOT, and Aida Khajavirad for running the experiments of Section~\ref{subsec:experiments.pop} with BARON. We thank the Editorial Board of SIAGA and the anonymous referees for their constructive feedback and insightful comments. Finally, this work has benefited from questions and comments from several colleagues, among whom we gladly acknowledge Greg Blekherman, Stephen Boyd, Sanjeeb Dash, Etienne de Klerk, Jesus de Loera, Lijun Ding, Jo\~ao Gouveia, Didier Henrion, Jean Bernard Lasserre, Leo Liberti, Lek-Heng Lim, Bruce Reznick, James Saunderson, and Bernd Sturmfels.}

\newpage

\section{Supplementary Material: Software Toolbox}
\label{sec:spot}

A complete implementation of the code used to generate the numerical results presented in this paper was written using the Systems Polynomial Optimization Toolbox (SPOT) \cite{Megretski_spot} and is freely available online\footnote{Link to the software:

\href{https://github.com/anirudhamajumdar/spotless/tree/spotless_isos}{https://github.com/anirudhamajumdar/spotless/tree/spotless\_isos}}. The toolbox features efficient polynomial algebra and allows us to setup the large-scale LPs and SOCPs arising from our examples. Here we provide a brief introduction to the software. This is not meant as a comprehensive tutorial on the SPOT toolbox (for this one may refer to SPOT's documentation\footnote{Link to SPOT's documentation:

\href{https://github.com/spot-toolbox/spotless/blob/master/doc/manual.pdf}{https://github.com/spot-toolbox/spotless/blob/master/doc/manual.pdf}}). Rather, the goal here is to provide an introduction sufficient for setting up and solving the DSOS and SDSOS programs in this paper. The code relevant for this purpose is included in a branch of SPOT that we have named \texttt{iSOS} (for ``inside sum of squares'').

\subsection{Installing SPOT}

Download the software package from Github available here:

\quad

\noindent \href{https://github.com/anirudhamajumdar/spotless/tree/spotless_isos}{https://github.com/anirudhamajumdar/spotless/tree/spotless\_isos}

\quad

\noindent Next, start MATLAB and run the \texttt{spot\_install.m} script. This script will setup the MATLAB path and compile a few mex functions.
The user may wish to save the new MATLAB path for future use.

\subsection{Variables and polynomials}

Polynomials are defined and manipulated by the \\ \texttt{@msspoly} class of \texttt{SPOT}. In order to define a new variable, one can use the \texttt{msspoly.m} function:

$$ >> \texttt{x = msspoly(`x')}$$
which creates the polynomial $p(x) = x$. The argument to this function is the name of the created variable and is restricted to four characters chosen from the alphabet (lower case and upper case). A MATLAB vector of variables can be created by passing a second argument to the function:

$$ >> \texttt{x = msspoly(`x',n)}.$$

This will create a $n \times 1$ vector of variables that can be accessed using standard array indexing. Multivariate polynomials can then be constructed as follows:

\begin{flalign}
& >> \texttt{x = msspoly(`x',3);} \nonumber \\
& >> \texttt{p = 2*x(1)\string^2 - 5*x(2)\string^2 + x(3)\string^2} \nonumber
\end{flalign}

Variables can be manipulated and operated on using a variety of functions. These include standard arithmetic operations (e.g. addition, multiplication, dot product) and operations for manipulating vectors (e.g. concatenating, reshaping, transposing). Other useful functions include:

\quad

\texttt{deg.m}: Returns the total degree of a polynomial. If a second argument in the form of a \texttt{msspoly}
is provided, the degree with respect to these variables is returned.

\quad

\texttt{diff.m}: Differentiates a polynomial (first argument) with respect to a set of \texttt{msspoly} variables (second argument). The result is the matrix of partial derivatives.

\quad

\texttt{subs.m}: Substitutes the third argument in place of the second argument wherever it appears in the first argument.

\subsection{Programs}

Programs involving DSOS/SDSOS/SOS constraints are constructed using the \texttt{spotsosprog} class. In this section, we demonstrate the workings of \texttt{spotsosprog} with the help of an example. In particular, we take the problem of minimizing a form on the unit sphere considered in this paper. The following example is also available in \texttt{doc/examples/sdsos\string_example.m}.

\begin{flalign*}
& \texttt{\% Construct polynomial which is to be minimized} \\ 
& \texttt{x = msspoly(`x',6);} \\ 
& \texttt{vx = monomials(x,4:4);} \\ 
& \texttt{randn(`state',0)} \\ 
& \texttt{cp = randn(1,length(vx));} \\ 
& \texttt{p = cp*vx;} 
\end{flalign*}
This block of code constructs the polynomial that is to be lower bounded. In order to do this, we create a six dimensional vector \texttt{x} of variables using the \texttt{msspoly} command introduced before. Next, we construct a vector of monomials using the \texttt{monomials} function. The first input to this function is the \texttt{msspoly} variable over which the monomials are defined. The second input to the function is the range of degrees the monomials should have. In this case, since we are considering homogeneous quartics, the range is simply $4:4$ (i.e., just $4$). 
\begin{flalign*}
& \texttt{\% Build program} \\ 
& \texttt{prog = spotsosprog;} \\ 
& \texttt{prog = prog.withIndeterminate(x);} \\ 
& \texttt{[prog,gamma] = prog.newFree(1);} \\
& \texttt{prog = prog.withDSOS(p - gamma*(x'*x)\string^2);}
\end{flalign*}

This block of code sets up the program and constraints. First, the program is initialized in the form of the variable \texttt{prog}. This object will contain information about constraints and decision variables. Next, we declare the variable \texttt{x} to be an \emph{indeterminate} or abstract variable (thus distinguishing it from a decision variable). Decision variables are created using the \texttt{newFree} function in the class \texttt{spotsosprog}. The input to this function is the number of new decision variables to be created. The outputs are the updated program and a variable corresponding to the decision variable. In our case, we need a single variable \texttt{gamma} to be declared. Finally, we specify a DSOS constraint on \texttt{p - gamma*(x'*x)\string^2} using the \texttt{withDSOS} command. There are corresponding functions \texttt{withSDSOS} and \texttt{withSOS} that can be used to setup SDSOS or SOS constraints.
\begin{flalign*}
& \texttt{\% Setup options and solve program} \\
& \texttt{options = spot\_sdp\_default\_options();} \\
& \texttt{\% Use just the interior point algorithm to clean up} \\
& \texttt{options.solveroptions.MSK\_IPAR\_BI\_CLEAN\_OPTIMIZER =  ...} \\
& \texttt{... `MSK\_OPTIMIZER\_INTPNT';} \\
& \texttt{\% Don't use basis identification} \\
& \texttt{options.solveroptions.MSK\_IPAR\_INTPNT\_BASIS = `MSK\_BI\_NEVER';}  \\
& \texttt{\% Display solver output} \\
& \texttt{options.verbose = 1;} \\
& \nonumber \\
& \texttt{sol = prog.minimize(-gamma, @spot\_mosek, options);} \\
&  \nonumber \\
& \texttt{\%Get value of gamma for optimal solution} \\
& \texttt{gamma\_optimal = double(sol.eval(gamma))} 
\end{flalign*}

Finally, in this block of code, we {\gh dictate} an options structure for the program. In particular, the field \texttt{options.solveroptions} contains options that are specific to the solver to be used (MOSEK in our case). The \texttt{minimize} command is used to specify the objective of the program (which must be linear in the decision variables), the function handle corresponding to the solver to be used, and the structure of options. Currently, MOSEK, Gurobi and SeDuMi are the supported solvers. So, for example, in order to use the Gurobi solver for a (S)DSOS program, we would specify the function handle \texttt{@spot\_gurobi}. 

The output of the \texttt{minimize} command is a solution structure, which contains diagnostic information and allows one to access the optimized decision variables. The last line of our example code demonstrates how to obtain the optimized variable \texttt{gamma} and convert it to a MATLAB \texttt{double} type.

\subsection{Additional functionality}

Next, we review additional functionality not covered in the example above. While this section is not meant to be an exhaustive list of all the functionality available in SPOT, it should be sufficient to reproduce the numerical results presented in this paper. In the lines of code presented in the following sections, it is assumed that \texttt{prog} is an object of the \texttt{spotsosprog} class, \texttt{x} is a \texttt{msspoly} variable of size \texttt{n} and an indeterminate variable of \texttt{prog}. These variables can be initialized as in the example above with the following lines of code:
\begin{flalign}
& \texttt{prog = spotsosprog;} \nonumber \\
& \texttt{x = msspoly(`x',n);} \nonumber \\
& \texttt{prog = prog.withIndeterminate(x);} \nonumber 
\end{flalign}

\subsubsection{Creating decision variables} It is often useful to construct a polynomial whose coefficients are decision variables. This can be achieved with the \texttt{newFree} and \texttt{monomials} functions introduced above. In particular, one can create a vector of monomials in a \texttt{msspoly} variable \texttt{x} of a given degree \texttt{d} using the command \texttt{v = monomials(x,0:d)}. Then, the set of coefficients can be declared using \texttt{[prog,c] = prog.newFree(length(v))}. Finally, one can obtain the desired polynomial by multiplying these two together: \texttt{p = c'*v}. 

It is also possible to create other types of decision variables. For example, the following functions can be used to create various types of matrix decision variables:

\texttt{newSym}: New symmetric matrix.

\texttt{newDD}: New symmetric matrix constrained to be diagonally dominant.

\texttt{newSDD}: New symmetric matrix constrained to be scaled diagonally dominant.

\texttt{newPSD}: New symmetric positive semidefinite matrix.

\texttt{newDDdual}: New symmetric matrix constrained to lie in the dual of the cone of diagonally dominant matrices.

\texttt{newSDDdual}: New symmetric matrix constrained to lie in the dual of the cone of scaled diagonally dominant matrices.

The input to these functions is the size of the desired (square) matrix. For example, in order to create a $10 \times 10$ symmetric matrix \texttt{Q} constrained to be diagonally dominant, one can use the following lines of code:
\begin{flalign}
& \texttt{[prog,Q] = prog.newDD(10);} \nonumber
\end{flalign}

\subsubsection{Specifying constraints}

In addition to the \texttt{withDSOS}, \texttt{withSDSOS} and \texttt{withSOS} functions introduced previously, constraints on existing decision variables can be specified using the following functions:

\texttt{withEqs}: Sets up constraints of the form \texttt{expr = 0}, where \texttt{expr} is the input to the function and is a matrix whose elements are to be constrained to be equal to 0. 

\texttt{withPos}: Sets up constraints of the form \texttt{expr $\geq$ 0}, where \texttt{expr} is the input to the function and is a matrix whose elements are to be constrained to be nonnegative. 

Note that the functions \texttt{withEqs} and \texttt{withPos} allow one to specify constraints in a vectorized manner, thus avoiding MATLAB's potentially-slow for-loops. 

\texttt{withDD}: Constrains a symmetric matrix (the input to the function) to be diagonally dominant.

\texttt{withSDD}: Constrains a symmetric matrix (the input to the function) to be scaled diagonally dominant.

\texttt{withPSD}: Constrains a symmetric matrix (the input to the function) to be positive semidefinite.

In each case above, the inputs must be affine functions of the decision variables of the program. The output of these functions is an updated \texttt{spotsosprog} object that contains the new constraints. We illustrate the use of the function \texttt{withEqs} with the help of a simple example. The other functions can be used in a similar manner.
\begin{flalign}
& \texttt{prog = spotsosprog;} \nonumber \\
& \texttt{[prog,tau1] = prog.newFree(10);} \nonumber \\
& \texttt{[prog,tau2] = prog.newFree(10);} \nonumber \\
& \texttt{prog = prog.withEqs(tau1 - tau2);} 
\end{flalign}

In this example, the elements of the $10 \times 1$ decision vector \texttt{tau1} are constrained to be equal to the elements of the vector \texttt{tau2}. 

\texttt{withPolyEqs}: In order to constrain two polynomials $p_1(x)$ and $p_2(x)$ to be equal to each other for all $x$, one may use the function \texttt{withPolyEqs}. This function will constrain the coefficients of the two polynomials to be equal. Here is a simple example:
\begin{flalign}
& \texttt{prog = spotsosprog;} \nonumber \\
& \texttt{x = msspoly(`x',2);} \nonumber \\
& \texttt{prog = prog.withIndeterminate(x);} \nonumber \\
& \texttt{[prog,c] = prog.newFree(4);} \nonumber \\
& \texttt{p1 = (c(1) + c(2))*x(1)\string^2 + c(3)*x(2)\string^2;} \nonumber \\
& \texttt{p2 = 2*x(1)\string^2 + c(4)*x(2)\string^2;} \nonumber \\
& \texttt{prog = prog.withPolyEqs(p1 - p2);} \nonumber
\end{flalign}

This will constrain \texttt{c(1) + c(2) = 2} and \texttt{c(3) = c(4)}.

\subsubsection{Checking if a polynomial is dsos}

The three utility functions \texttt{isDSOS}, \texttt{isSDSOS} and \texttt{isSOS} allow one to check if a given polynomial is dsos, sdsos, or sos respectively. The only input to the functions is the polynomial to be checked. The outputs are a boolean variable indicating whether the polynomial is in fact dsos/ sdsos/ sos, the Gram matrix, and the monomial basis corresponding to the Gram matrix (these last two are non-empty only if the given polynomial is in fact dsos/sdsos/sos). The following lines of code provide a simple example:
\begin{flalign}
& \texttt{x = msspoly(`x',3);} \nonumber \\
& \texttt{p = x(1)\string^2 + 5*x(2)\string^2 + 3*x(3)\string^2;} \nonumber \\
& \texttt{[isdsos,Q,v] = isDSOS(p)}
\end{flalign}

One can check that the polynomial \texttt{p - v'*Q*v} is the zero polynomial (up to numerical tolerances).

\newpage

\bibliographystyle{siamplain}
\bibliography{pablo_amirali,elib}

\end{document}


\maketitle

\section{A detailed example}

Here we include some equations and theorem-like environments to show
how these are labeled in a supplement and can be referenced from the
main text.
Consider the following equation:
\begin{equation}
  \label{eq:suppa}
  a^2 + b^2 = c^2.
\end{equation}
You can also reference equations such as \cref{eq:matrices,eq:bb} 
from the main article in this supplement.

\lipsum[100-101]

\begin{theorem}
  An example theorem.
\end{theorem}

\lipsum[102]
 
\begin{lemma}
  An example lemma.
\end{lemma}

\lipsum[103-105]

Here is an example citation: \cite{KoMa14}.

\section[Proof of Thm]{Proof of \cref{thm:bigthm}}
\label{sec:proof}

\lipsum[106-112]

\section{Additional experimental results}
\Cref{tab:foo} shows additional
supporting evidence. 

\begin{table}[htbp]
{\footnotesize
  \caption{Example table}  \label{tab:foo}
\begin{center}
  \begin{tabular}{|c|c|c|} \hline
   Species & \bf Mean & \bf Std.~Dev. \\ \hline
    1 & 3.4 & 1.2 \\
    2 & 5.4 & 0.6 \\ \hline
  \end{tabular}
\end{center}
}
\end{table}

\bibliographystyle{siamplain}
\bibliography{references}


\maketitle

\newpage

\section{Supplementary Material: Software Toolbox}
\label{sec:spot}

A complete implementation of the code used to generate the numerical results presented in this paper was written using the Systems Polynomial Optimization Toolbox (SPOT) \cite{Megretski_spot} and is freely available online\footnote{Link to the software:

\href{https://github.com/anirudhamajumdar/spotless/tree/spotless_isos}{https://github.com/anirudhamajumdar/spotless/tree/spotless\_isos}}. The toolbox features efficient polynomial algebra and allows us to setup the large-scale LPs and SOCPs arising from our examples. Here we provide a brief introduction to the software. This is not meant as a comprehensive tutorial on the SPOT toolbox (for this one may refer to SPOT's documentation\footnote{Link to SPOT's documentation:

\href{https://github.com/spot-toolbox/spotless/blob/master/doc/manual.pdf}{https://github.com/spot-toolbox/spotless/blob/master/doc/manual.pdf}}). Rather, the goal here is to provide an introduction sufficient for setting up and solving the DSOS and SDSOS programs in this paper. The code relevant for this purpose is included in a branch of SPOT that we have named \texttt{iSOS} (for ``inside sum of squares'').

\subsection{Installing SPOT}

Download the software package from Github available here:

\quad

\noindent \href{https://github.com/anirudhamajumdar/spotless/tree/spotless_isos}{https://github.com/anirudhamajumdar/spotless/tree/spotless\_isos}

\quad

\noindent Next, start MATLAB and run the \texttt{spot\_install.m} script. This script will setup the MATLAB path and compile a few mex functions.
The user may wish to save the new MATLAB path for future use.

\subsection{Variables and polynomials}

Polynomials are defined and manipulated by the \\ \texttt{@msspoly} class of \texttt{SPOT}. In order to define a new variable, one can use the \texttt{msspoly.m} function:

$$ >> \texttt{x = msspoly(`x')}$$
which creates the polynomial $p(x) = x$. The argument to this function is the name of the created variable and is restricted to four characters chosen from the alphabet (lower case and upper case). A MATLAB vector of variables can be created by passing a second argument to the function:

$$ >> \texttt{x = msspoly(`x',n)}.$$

This will create a $n \times 1$ vector of variables that can be accessed using standard array indexing. Multivariate polynomials can then be constructed as follows:

\begin{flalign}
& >> \texttt{x = msspoly(`x',3);} \nonumber \\
& >> \texttt{p = 2*x(1)\string^2 - 5*x(2)\string^2 + x(3)\string^2} \nonumber
\end{flalign}

Variables can be manipulated and operated on using a variety of functions. These include standard arithmetic operations (e.g. addition, multiplication, dot product) and operations for manipulating vectors (e.g. concatenating, reshaping, transposing). Other useful functions include:

\quad

\texttt{deg.m}: Returns the total degree of a polynomial. If a second argument in the form of a \texttt{msspoly}
is provided, the degree with respect to these variables is returned.

\quad

\texttt{diff.m}: Differentiates a polynomial (first argument) with respect to a set of \texttt{msspoly} variables (second argument). The result is the matrix of partial derivatives.

\quad

\texttt{subs.m}: Substitutes the third argument in place of the second argument wherever it appears in the first argument.

\subsection{Programs}

Programs involving DSOS/SDSOS/SOS constraints are constructed using the \texttt{spotsosprog} class. In this section, we demonstrate the workings of \texttt{spotsosprog} with the help of an example. In particular, we take the problem of minimizing a form on the unit sphere considered in this paper. The following example is also available in \texttt{doc/examples/sdsos\string_example.m}.

\begin{flalign*}
& \texttt{\% Construct polynomial which is to be minimized} \\ 
& \texttt{x = msspoly(`x',6);} \\ 
& \texttt{vx = monomials(x,4:4);} \\ 
& \texttt{randn(`state',0)} \\ 
& \texttt{cp = randn(1,length(vx));} \\ 
& \texttt{p = cp*vx;} 
\end{flalign*}
This block of code constructs the polynomial that is to be lower bounded. In order to do this, we create a six dimensional vector \texttt{x} of variables using the \texttt{msspoly} command introduced before. Next, we construct a vector of monomials using the \texttt{monomials} function. The first input to this function is the \texttt{msspoly} variable over which the monomials are defined. The second input to the function is the range of degrees the monomials should have. In this case, since we are considering homogeneous quartics, the range is simply $4:4$ (i.e., just $4$). 
\begin{flalign*}
& \texttt{\% Build program} \\ 
& \texttt{prog = spotsosprog;} \\ 
& \texttt{prog = prog.withIndeterminate(x);} \\ 
& \texttt{[prog,gamma] = prog.newFree(1);} \\
& \texttt{prog = prog.withDSOS(p - gamma*(x'*x)\string^2);}
\end{flalign*}

This block of code sets up the program and constraints. First, the program is initialized in the form of the variable \texttt{prog}. This object will contain information about constraints and decision variables. Next, we declare the variable \texttt{x} to be an \emph{indeterminate} or abstract variable (thus distinguishing it from a decision variable). Decision variables are created using the \texttt{newFree} function in the class \texttt{spotsosprog}. The input to this function is the number of new decision variables to be created. The outputs are the updated program and a variable corresponding to the decision variable. In our case, we need a single variable \texttt{gamma} to be declared. Finally, we specify a DSOS constraint on \texttt{p - gamma*(x'*x)\string^2} using the \texttt{withDSOS} command. There are corresponding functions \texttt{withSDSOS} and \texttt{withSOS} that can be used to setup SDSOS or SOS constraints.
\begin{flalign*}
& \texttt{\% Setup options and solve program} \\
& \texttt{options = spot\_sdp\_default\_options();} \\
& \texttt{\% Use just the interior point algorithm to clean up} \\
& \texttt{options.solveroptions.MSK\_IPAR\_BI\_CLEAN\_OPTIMIZER =  ...} \\
& \texttt{... `MSK\_OPTIMIZER\_INTPNT';} \\
& \texttt{\% Don't use basis identification} \\
& \texttt{options.solveroptions.MSK\_IPAR\_INTPNT\_BASIS = `MSK\_BI\_NEVER';}  \\
& \texttt{\% Display solver output} \\
& \texttt{options.verbose = 1;} \\
& \nonumber \\
& \texttt{sol = prog.minimize(-gamma, @spot\_mosek, options);} \\
&  \nonumber \\
& \texttt{\%Get value of gamma for optimal solution} \\
& \texttt{gamma\_optimal = double(sol.eval(gamma))} 
\end{flalign*}

Finally, in this block of code, we {\gh dictate} an options structure for the program. In particular, the field \texttt{options.solveroptions} contains options that are specific to the solver to be used (MOSEK in our case). The \texttt{minimize} command is used to specify the objective of the program (which must be linear in the decision variables), the function handle corresponding to the solver to be used, and the structure of options. Currently, MOSEK, Gurobi and SeDuMi are the supported solvers. So, for example, in order to use the Gurobi solver for a (S)DSOS program, we would specify the function handle \texttt{@spot\_gurobi}. 

The output of the \texttt{minimize} command is a solution structure, which contains diagnostic information and allows one to access the optimized decision variables. The last line of our example code demonstrates how to obtain the optimized variable \texttt{gamma} and convert it to a MATLAB \texttt{double} type.

\subsection{Additional functionality}

Next, we review additional functionality not covered in the example above. While this section is not meant to be an exhaustive list of all the functionality available in SPOT, it should be sufficient to reproduce the numerical results presented in this paper. In the lines of code presented in the following sections, it is assumed that \texttt{prog} is an object of the \texttt{spotsosprog} class, \texttt{x} is a \texttt{msspoly} variable of size \texttt{n} and an indeterminate variable of \texttt{prog}. These variables can be initialized as in the example above with the following lines of code:
\begin{flalign}
& \texttt{prog = spotsosprog;} \nonumber \\
& \texttt{x = msspoly(`x',n);} \nonumber \\
& \texttt{prog = prog.withIndeterminate(x);} \nonumber 
\end{flalign}

\subsubsection{Creating decision variables} It is often useful to construct a polynomial whose coefficients are decision variables. This can be achieved with the \texttt{newFree} and \texttt{monomials} functions introduced above. In particular, one can create a vector of monomials in a \texttt{msspoly} variable \texttt{x} of a given degree \texttt{d} using the command \texttt{v = monomials(x,0:d)}. Then, the set of coefficients can be declared using \texttt{[prog,c] = prog.newFree(length(v))}. Finally, one can obtain the desired polynomial by multiplying these two together: \texttt{p = c'*v}. 

It is also possible to create other types of decision variables. For example, the following functions can be used to create various types of matrix decision variables:

\texttt{newSym}: New symmetric matrix.

\texttt{newDD}: New symmetric matrix constrained to be diagonally dominant.

\texttt{newSDD}: New symmetric matrix constrained to be scaled diagonally dominant.

\texttt{newPSD}: New symmetric positive semidefinite matrix.

\texttt{newDDdual}: New symmetric matrix constrained to lie in the dual of the cone of diagonally dominant matrices.

\texttt{newSDDdual}: New symmetric matrix constrained to lie in the dual of the cone of scaled diagonally dominant matrices.

The input to these functions is the size of the desired (square) matrix. For example, in order to create a $10 \times 10$ symmetric matrix \texttt{Q} constrained to be diagonally dominant, one can use the following lines of code:
\begin{flalign}
& \texttt{[prog,Q] = prog.newDD(10);} \nonumber
\end{flalign}

\subsubsection{Specifying constraints}

In addition to the \texttt{withDSOS}, \texttt{withSDSOS} and \texttt{withSOS} functions introduced previously, constraints on existing decision variables can be specified using the following functions:

\texttt{withEqs}: Sets up constraints of the form \texttt{expr = 0}, where \texttt{expr} is the input to the function and is a matrix whose elements are to be constrained to be equal to 0. 

\texttt{withPos}: Sets up constraints of the form \texttt{expr $\geq$ 0}, where \texttt{expr} is the input to the function and is a matrix whose elements are to be constrained to be nonnegative. 

Note that the functions \texttt{withEqs} and \texttt{withPos} allow one to specify constraints in a vectorized manner, thus avoiding MATLAB's potentially-slow for-loops. 

\texttt{withDD}: Constrains a symmetric matrix (the input to the function) to be diagonally dominant.

\texttt{withSDD}: Constrains a symmetric matrix (the input to the function) to be scaled diagonally dominant.

\texttt{withPSD}: Constrains a symmetric matrix (the input to the function) to be positive semidefinite.

In each case above, the inputs must be affine functions of the decision variables of the program. The output of these functions is an updated \texttt{spotsosprog} object that contains the new constraints. We illustrate the use of the function \texttt{withEqs} with the help of a simple example. The other functions can be used in a similar manner.
\begin{flalign}
& \texttt{prog = spotsosprog;} \nonumber \\
& \texttt{[prog,tau1] = prog.newFree(10);} \nonumber \\
& \texttt{[prog,tau2] = prog.newFree(10);} \nonumber \\
& \texttt{prog = prog.withEqs(tau1 - tau2);} 
\end{flalign}

In this example, the elements of the $10 \times 1$ decision vector \texttt{tau1} are constrained to be equal to the elements of the vector \texttt{tau2}. 

\texttt{withPolyEqs}: In order to constrain two polynomials $p_1(x)$ and $p_2(x)$ to be equal to each other for all $x$, one may use the function \texttt{withPolyEqs}. This function will constrain the coefficients of the two polynomials to be equal. Here is a simple example:
\begin{flalign}
& \texttt{prog = spotsosprog;} \nonumber \\
& \texttt{x = msspoly(`x',2);} \nonumber \\
& \texttt{prog = prog.withIndeterminate(x);} \nonumber \\
& \texttt{[prog,c] = prog.newFree(4);} \nonumber \\
& \texttt{p1 = (c(1) + c(2))*x(1)\string^2 + c(3)*x(2)\string^2;} \nonumber \\
& \texttt{p2 = 2*x(1)\string^2 + c(4)*x(2)\string^2;} \nonumber \\
& \texttt{prog = prog.withPolyEqs(p1 - p2);} \nonumber
\end{flalign}

This will constrain \texttt{c(1) + c(2) = 2} and \texttt{c(3) = c(4)}.

\subsubsection{Checking if a polynomial is dsos}

The three utility functions \texttt{isDSOS}, \texttt{isSDSOS} and \texttt{isSOS} allow one to check if a given polynomial is dsos, sdsos, or sos respectively. The only input to the functions is the polynomial to be checked. The outputs are a boolean variable indicating whether the polynomial is in fact dsos/ sdsos/ sos, the Gram matrix, and the monomial basis corresponding to the Gram matrix (these last two are non-empty only if the given polynomial is in fact dsos/sdsos/sos). The following lines of code provide a simple example:
\begin{flalign}
& \texttt{x = msspoly(`x',3);} \nonumber \\
& \texttt{p = x(1)\string^2 + 5*x(2)\string^2 + 3*x(3)\string^2;} \nonumber \\
& \texttt{[isdsos,Q,v] = isDSOS(p)}
\end{flalign}

One can check that the polynomial \texttt{p - v'*Q*v} is the zero polynomial (up to numerical tolerances).

\bibliographystyle{siamplain}
\bibliography{pablo_amirali,elib}

\newpage
\input{rebuttal}